\newcommand{\olsi}[1]{\,\overline{\!{#1}}} 
\newcommand{\ols}[1]{\mskip.5\thinmuskip\overline{\mskip-.5\thinmuskip {#1} \mskip-.5\thinmuskip}\mskip.5\thinmuskip} 
\newtheorem{theorem}{Theorem}[section]
\newtheorem{lemma}[theorem]{Lemma}
\newtheorem{corollary}[theorem]{Corollary}
\newtheorem{proposition}[theorem]{Proposition}
\newtheorem{conjecture}[theorem]{Conjecture}
\newtheorem{remark}[theorem]{Remark}
\newtheorem*{claim}{Claim}
\theoremstyle{definition}
\newtheorem{definition}[theorem]{Definition}
\newtheorem{question}[theorem]{Question}
\newtheorem{example}[theorem]{Example}
\numberwithin{equation}{section}
\newcommand{\pp}{\partial}
\newcommand{\RR}{\mathbb{R}}
\newcommand{\bp}{\bar\partial}
\newcommand{\Ca}{\mathcal{C}}
\newcommand{\dP}{\mathbb{P}}
\begin{document}

\title[\resizebox{5.5in}{!}{Calabi-Yau metrics of Calabi type with polynomial rate of convergence}]{Calabi-Yau metrics of Calabi type with polynomial rate of convergence}
\author{Yifan Chen}
\address{Department of Mathematics\\
University of California\\
Berkeley, CA, USA, 94720\\}
\email{yifan-chen@berkeley.edu}
\thanks{}

\begin{abstract}

We present new complete Calabi-Yau metrics defined on the complement of a smooth anticanonical divisor with ample normal bundle, approaching the Calabi model space at a polynomial rate. Moreover, we establish the uniqueness of this type of Calabi-Yau metric within a fixed cohomology class.

\end{abstract}
\maketitle
\section{Introduction}

In 1978, Yau \cite{yau1978ricci} gave the celebrated proof of Calabi conjecture by solving the Monge-Amp\`ere equation for each K\"ahler class on compact K\"ahler manifolds. In 1990, Tian and Yau \cite{tian1990complete} expanded upon this achievement by constructing complete Calabi-Yau metrics on quasi-projective manifolds, extending the techniques introduced in \cite{yau1978ricci} to the non-compact case. Specifically, when $M$ is a Fano manifold and $D$ is a smooth irreducible anticanonical divisor, they constructed complete Calabi-Yau metric $\omega_{TY}$ on $M\setminus D$, called Tian-Yau metric. $\omega_{TY}$ is exponentially close to the Calabi model space. Here the Calabi model space $(\mathcal{C}, I_\Ca, \omega_\Ca, \Omega_\Ca)$ is the disc bundle over $D$ within $N_D$ removing the zero section, with complex structure $I_\Ca$ given by $N_D$, K\"ahler metric $\omega_{\Ca}$ given by Calabi anstaz and a nowhere vanishing $(n,0)$-form $\Omega_\Ca$. The strict definition of Calabi model space will be given in Section \ref{modelspace}. In fact, for any compact supported class in $M\setminus D$, we can find Calabi-Yau metric exponentially close to the Calabi model space. We call the complete Calabi-Yau manifold with this property \emph{asymptotically Calabi}, which is defined as follows. 

\begin{definition}\label{asymptotic calabi}
Let $X$ be an $n$-dim complete K\"ahler manifold with, complex structure $I$, 2-form $\omega$ and $(n,0)$-form $\Omega$. We say $(X, I, \omega, \Omega)$ is 
\begin{enumerate}
\item \emph{weak asymptotically Calabi with rate $\kappa$} if there exists $\underline{\delta}>0$, $\kappa>0$, a Calabi model space $(\mathcal{C}, I_\Ca, \omega_\Ca,\Omega_\Ca)$ with function $z$ on $\Ca$ defined in Section \ref{modelspace}, and a diffeomorphism $\Phi: \mathcal{C} \setminus \mathcal{K} \rightarrow X \setminus K$, where $K \subset X$ and $\mathcal{K}\subset \mathcal{C}$ are compact, such that the following hold uniformly as $z \rightarrow+\infty$:
	$$\left|\nabla_{\omega_\Ca}^k\left(\Phi^* I-I_{\mathcal{C}}\right)\right|_{\omega_\Ca}+\left|\nabla_{\omega_\Ca}^k\left(\Phi^* \Omega-\Omega_{\mathcal{C}}\right)\right|_{\omega_\Ca} =O(e^{-\underline{\delta} z^{n / 2}}),$$
	$$\quad\left|\nabla_{\omega_\Ca}^k\left(\Phi^* \omega-\omega_{\mathcal{C}}\right)\right|_{\omega_\Ca} =O(z^{-\kappa}) \text{ for all } k \in \mathbb{N}_0.$$
\item \emph{asymptotically Calabi} if it is weak asymptotically Calabi and 
	$$\quad\left|\nabla_{\omega_\Ca}^k\left(\Phi^* \omega-\omega_{\mathcal{C}}\right)\right|_{\omega_\Ca}=O(e^{-\underline{\delta} z^{n / 2}})\text{ for all } k \in \mathbb{N}_0.$$ 
\end{enumerate}
\end{definition}

The asymptotically Calabi Calabi-Yau manifold is well understood by Hein-Sun-Viaclovsky-Zhang \cite{hsvz2}, in which they showed that any asymptotically Calabi Calabi-Yau manifold can be compactified complex analytically to a weak Fano manifold, i.e. a smooth projective manifold with nef and big anti-canonical bundle. In this paper, we are going to show the existence and uniqueness of weak asymptotically Calabi Calabi-Yau metrics that is not asymptotically Calabi. Our setting is as follows:
\begin{definition}\label{setting}
	Let $M$ be a compact K\"ahler manifold with complex dimension $n\geqslant 3$, $D\in |-K_M|$ be a smooth divisor with ample normal bundle and $X = M\setminus D$. We denote $H_{+}^2(X)$ as the subset of $H^2(X)$ which consists of classes $\mathfrak{k}$ such that $\mathfrak{k}^p$ is positively paired with any compact analytic subset $Y$ of $X$ of pure complex dimension $p$. 
\end{definition}

 The main result of this paper is the following

\begin{theorem}\label{main theorem 1}
	 For any class $\mathfrak{k}$ in $H_{+}^2(X)$, there exists a Calabi-Yau metric $\omega$ in the class $\mathfrak{k}$ which is weak asymptotically Calabi with rate $1$.
\end{theorem}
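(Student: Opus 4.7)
The plan is to produce the Calabi-Yau metric by a three-step scheme: construct a reference Kähler form $\omega_0$ in the class $\mathfrak{k}$ whose asymptotic behavior matches the Calabi model up to a polynomial error of order $z^{-1}$, then solve a complex Monge-Ampère equation in weighted Hölder spaces adapted to the Calabi model to correct $\omega_0$ to a Calabi-Yau metric, and finally verify that the correction $u$ preserves the rate-$1$ decay.

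\textbf{Step 1 (reference metric).} A general class in $H_+^2(X)$ differs from the Tian-Yau setting in that it need not have a compactly supported representative: under the asymptotic restriction map $H^2(X) \to H^2(D)$, its image can be an arbitrary Kähler class on $D$. I would fix a Kähler form $\omega_D$ on $D$ realizing this image, pull it back through the tubular projection $\pi : N_D \to D$ to a neighborhood of $D$ in $M$, and graft this onto the Calabi ansatz on the complement. The resulting Kähler form equals $\omega_\Ca + \pi^*\omega_D$ plus compactly supported corrections near infinity; measured against $\omega_\Ca$, whose horizontal block grows linearly in $z$, the term $\pi^*\omega_D$ is precisely of size $z^{-1}$. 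Adjusting by a compactly supported exact form places $\omega_0$ in the class $\mathfrak{k}$.

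\textbf{Step 2 (weighted linear theory).} Writing $\omega_0^n = e^f\,\Omega\wedge\ols{\Omega}$, the Ricci potential $f$ inherits the rate-$1$ decay, since the only discrepancy between $\omega_0^n$ and $\omega_\Ca^n$ at infinity comes from the $\pi^*\omega_D$ contribution. To solve $(\omega_0 + \sq\,\pp\bp u)^n = e^{-f}\omega_0^n$, I would set up weighted Hölder spaces $C^{k,\alpha}_\delta$ on the Calabi model adapted to the anisotropic scaling of $\omega_\Ca$. The key analytic input is that $\Delta_{\omega_\Ca}: C^{2,\alpha}_\delta \to C^{0,\alpha}_{\delta-\sigma}$ is Fredholm and invertible for a range of weights $\delta$ avoiding a discrete indicial set governed by the spectrum of $\Delta_D$ on $D$ and by the Fourier modes along the $S^1$-fibration of the circle bundle. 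Separation of variables along the $S^1$-action on $\Ca$ reduces the indicial computation to an ODE analysis in $z$.

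\textbf{Step 3 (nonlinear solution and decay).} Existence of a solution $u$ with $u\to 0$ at infinity follows by a continuity method along the family $tf$, $t\in[0,1]$, with closedness supplied by the weighted Schauder estimate from Step~2 and by a Tian-Yau style $C^0$ bound whose only non-routine ingredient is the positivity assumption $\mathfrak{k}^p\cdot Y > 0$ of Definition~\ref{setting}, which furnishes the Sobolev-type inequality required to run Moser iteration on $X$. Once $u$ is produced, bootstrapping in weighted spaces gives $|\nabla_{\omega_\Ca}^k(\sq\,\pp\bp u)|_{\omega_\Ca} = O(z^{-1})$ for all $k$, yielding the desired weak asymptotically Calabi condition with rate~$1$.

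\textbf{Main obstacle.} The hardest ingredient is the weighted linear analysis on $\Ca$. Because $\Ca$ is not asymptotically conical---scales are anisotropic along the $S^1$-fibered geometry, with the potential growing like $z^{n/(n-1)}$ and the horizontal and vertical directions dilating at different $z$-powers---the standard weighted Fredholm theory for asymptotically Euclidean or conical ends does not transfer directly. Pinning down the indicial roots and certifying that $\delta=1$ lies in an invertibility window, rather than on the critical set, requires a careful mode-by-mode decomposition using eigenfunctions of $\Delta_D$ together with the special ODE structure of the Calabi potential. The nonlinear piece in Step~3 is, by comparison, a relatively standard adaptation of the Tian-Yau argument once the linear theory is in place.
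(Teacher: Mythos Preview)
Your overall architecture---build a reference metric in $\mathfrak{k}$, solve a Monge--Amp\`ere equation, verify decay---is the same as the paper's, but there is a genuine gap in Step~2 that the paper is largely devoted to repairing.

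The Ricci potential $f$ of your reference metric $\omega_0$ decays only like $z^{-1}$, which in terms of the distance function $r\sim z^{(n+1)/2}$ is $r^{-2/(n+1)}$. This is far slower than the $r^{-\mu}$, $\mu>2$, required by the Tian--Yau--Hein package (Theorem~\ref{Hein's Package}), and it also breaks the weighted Fredholm scheme you propose. The point is that on the Calabi model the zero Fourier mode of the Laplacian reduces to the ODE $u_0''(z)=nz^{n-1}P_0 f(z)$; with $|P_0 f|\lesssim z^{-1}$ you get $u_0=O(z^{n})$, so the Poisson inverse of an $O(z^{-1})$ function \emph{grows} polynomially (compare Proposition~\ref{solve poisson equation}). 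There is no decaying weighted space in which $\Delta_{\omega_\Ca}$ is an isomorphism onto data of this decay rate, so the continuity method cannot close as stated. The paper's key idea, which your proposal omits, is an iteration (Definition~\ref{iteration process}, Proposition~\ref{improve the decay of f_0}): repeatedly solve $\Delta_{\omega_\Ca}u_{j+1}=F_j$ and replace $\omega_0$ by $\omega_0+i\pp\bp u_{j+1}$, each step gaining one power of $z^{-1}$, until after $n$ steps the Ricci potential decays faster than $z^{-n-1}\sim r^{-2}$ and the standard package applies.

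Two smaller points. First, the integral condition $\int_X(e^f-1)\omega_0^n=0$ is not automatic and must be arranged; the paper does this by adding a multiple of the harmonic function $z$ to the potential (Proposition~\ref{compatible condition}), which your proposal does not address. Second, the restriction of $\mathfrak{k}$ to $D$ is not in general a K\"ahler class---it is an arbitrary class, possibly zero---so ``fix a K\"ahler form $\omega_D$ realizing this image'' is not correct. The positivity hypothesis $\mathfrak{k}\in H^2_+(X)$ is used not for a Sobolev inequality but, via the Demailly--P\u{a}un criterion on a 1-convex manifold, to make the reference form K\"ahler near the non-ample locus (proof of Proposition~\ref{compatible condition}).
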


To show the existence, we use the method in Tian-Yau \cite{tian1990complete}, which was subsequently generalized and refined by Hein \cite{HeinThesis}. This Tian-Yau-Hein's package facilitates the production of complete Calabi-Yau metrics when a suitable model metric at infinity is known. However, we cannot directly apply it here because the 2-form on $X$ coming from the restriction of 2-forms on $M$ will only decay at the rate $r^{-\frac{2}{n+1}}$, while Tian-Yau-Hein's package requires the Ricci potential decays faster than $r^{-2}$. We need to modify our background metric by solving the linearized operator of Monge-Amp\`ere equation to improve the decay. 

\begin{remark}
	The metric in Theorem \ref{main theorem 1} is new in the sense that under a fixed diffeomorphism $\Phi$, $\omega$ is weak asymptotically Calabi with rate $1$ but not asymptotically Calabi.
	We can also find Calabi-Yau metrics when $\mathrm{dim}_{\mathbb{C}}M = 2$. However, these metrics are not new. In fact, the 2 dimensional case is well understood: Sun-Zhang \cite{sun2022collapsing} showed that the $ALH^*$ gravitational instanton is always asymptotically Calabi.
\end{remark}

Besides, we also prove that the constructed metric is unique in the sense that:
\begin{theorem}\label{unique theorem 1}
     Fix any $p\in X$. Let $r$ be the distance function towards $p$ under the metric $\omega$ in Theorem \ref{main theorem 1}. If we have another Calabi-Yau metric $\tilde{\omega}$ in the same class $\mathfrak{k}$ satisfying $$\left|\,\tilde{\omega}-\omega\right|_{\omega}\leqslant C\cdot r^{-\kappa},\text{ as } r\to \infty,$$ for some positive constant $C,\kappa$, then $\tilde{\omega} = \omega$.
\end{theorem}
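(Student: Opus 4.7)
The plan is to reduce Theorem \ref{unique theorem 1} to a Liouville-type statement for the linearisation of the complex Monge-Amp\`ere equation, then exploit the parabolicity of the Calabi end.

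Since $\tilde\omega$ and $\omega$ represent the same class $\mathfrak{k}\in H^2(X;\mathbb{R})$, my first step is to produce $\varphi\in C^\infty(X)$ with $\tilde\omega=\omega+\sqrt{-1}\partial\bar\partial\varphi$ and controlled growth at infinity. Existence of a global $(1,1)$-potential follows from the $\partial\bar\partial$-lemma on $X$, which holds since $X=M\setminus D$ is Stein (alternatively one can lift the problem to $M$ and work with forms with mild singularities along $D$). Integrating the Hessian bound $|\sqrt{-1}\partial\bar\partial\varphi|_\omega=O(r^{-\kappa})$ twice along radial $\omega$-geodesics from a fixed base point, and using a pluriharmonic correction to absorb an affine term, yields the a priori estimate
$$|\varphi|(x) \;\leq\; C\bigl(1+r(x)\bigr)^{\max\{2-\kappa,0\}}$$
(with a logarithmic factor at the borderline $\kappa=2$).

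Because $\omega$ and $\tilde\omega$ are Ricci-flat with the same holomorphic volume form $\Omega$, the Calabi-Yau condition gives $\tilde\omega^n=\omega^n$; expanding yields the linear identity
$$\sqrt{-1}\partial\bar\partial\varphi \wedge T \;=\; 0, \qquad T \;:=\; \sum_{k=0}^{n-1}\omega^k\wedge\tilde\omega^{n-1-k}.$$
The $(n-1,n-1)$-form $T$ is $d$-closed, strictly positive, and uniformly equivalent (up to an $O(r^{-\kappa})$ perturbation) to $n\omega^{n-1}$, so $L\varphi:=\mathrm{tr}_T\sqrt{-1}\partial\bar\partial\varphi=0$ is a uniformly elliptic equation in divergence form. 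The arithmetic-geometric inequality applied to $\tilde\omega^n=\omega^n$ also gives $\Delta_\omega\varphi\geq 0$, so $\varphi$ is $\omega$-subharmonic and, symmetrically, $\tilde\omega$-superharmonic.

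The crucial geometric ingredient is that the Calabi end has sub-quadratic volume growth: a direct computation with the Calabi ansatz yields $\mathrm{vol}(B_r)\asymp r^{2n/(n+1)}$, and the parabolicity criterion $\int^{\infty}r/\mathrm{vol}(B_r)\,dr=\infty$ is verified for all $n\geq 3$, so $X$ is parabolic for $L$. On a parabolic manifold every $L$-harmonic function bounded on one side is constant, so it would suffice to show $\varphi$ is bounded. When $\kappa>2$ the a priori estimate already gives boundedness and the argument closes at once. For $0<\kappa\leq 2$ I would bootstrap: scaled Schauder estimates on unit-radius Calabi-model balls at distance $r$ convert growth of $\varphi$ into matching decay of its derivatives, and a Caccioppoli-type estimate on annular shells $B_{2R}\setminus B_R$ applied to $L\varphi=0$ is iterated together with a separation-of-variables analysis of $L$ on the Calabi model to identify and eliminate the finitely many slow-growth harmonic modes of $\varphi$ compatible with $|\sqrt{-1}\partial\bar\partial\varphi|_\omega=O(r^{-\kappa})$. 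Once those modes are peeled off, the residual $\varphi$ is bounded and parabolicity forces $\varphi\equiv$ const, hence $\tilde\omega=\omega$.

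The principal obstacle is exactly this last step: closing the Liouville argument for arbitrarily small $\kappa>0$. When $\kappa$ is small the permitted growth $r^{2-\kappa}$ of $\varphi$ sits only slightly above the volume growth $r^{2n/(n+1)}$, so no single $L^2$-Caccioppoli or integration-by-parts bound is enough to drive $\varphi$ to zero. The key technical work will be a precise indicial/spectral analysis of the linearised Monge-Amp\`ere on the Calabi model space, showing that each nonconstant harmonic mode produces $\sqrt{-1}\partial\bar\partial$-terms of a definite quantitative size along the $z\to\infty$ asymptotics, so that the decay hypothesis on $\tilde\omega-\omega$ forces every such mode to have vanishing coefficient, reducing $\varphi$ to a bounded function to which parabolicity applies.
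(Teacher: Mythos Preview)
The proposal has the right overall architecture---produce a global potential, then run a Liouville-type argument---but it contains a factual error and leaves precisely the hard part unproven.

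First, $X=M\setminus D$ is \emph{not} Stein in general. The hypothesis is only that $N_D$ is ample, which makes $X$ $1$-convex: it admits a Remmert reduction onto a Stein space by contracting the non-ample locus $E$ of $-K_M$, but $E$ can be positive-dimensional and compact. So the $\partial\bar\partial$-lemma already needs real work. In the paper this is Lemma~\ref{C0 bound of l}: one first builds a $d$-primitive $\sigma$ of $\tilde\omega-\omega$ with controlled growth on the model end, handles the compactly supported remainder via a vanishing theorem for $1$-convex manifolds, and then solves $\bar\partial\iota=\sigma^{0,1}$ by weighted H\"ormander $L^2$ on $X\setminus E$ with weight $e^{-\epsilon\mathfrak{t}}$. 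This yields only $|l|\leqslant C_\epsilon e^{\epsilon t}$, not a polynomial bound. Your claim that integrating $|i\partial\bar\partial\varphi|=O(r^{-\kappa})$ twice along geodesics gives $|\varphi|=O(r^{2-\kappa})$ is not justified: a bound on the $(1,1)$-part of the Hessian does not control the full real Hessian, and producing the ``pluriharmonic correction'' globally is exactly the nontrivial $\partial\bar\partial$-lemma you are trying to invoke.

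Second, the step you flag as ``the principal obstacle''---getting $\varphi$ bounded when $\kappa>0$ is small---is the entire content of the proof, and your sketch does not supply it. The paper's route is specific and uses the nonlinear structure: solve $\Delta_{\omega_{\mathcal C}}f=\Delta_{\omega_{\mathcal C}}l$ on the model via Proposition~\ref{solve poisson equation}, invoke the Sun--Zhang classification of sub-exponentially growing $\omega_{\mathcal C}$-harmonic functions to write $l=f+\lambda z+O(e^{-\delta z})$, and then iterate using
\[
\Delta_{\omega_{\mathcal C}}l \;=\; O\bigl(z^{-1}|i\partial\bar\partial l|_{\omega_{\mathcal C}}+|i\partial\bar\partial l|_{\omega_{\mathcal C}}^2\bigr)
\]
to push the decay of $\Delta_{\omega_{\mathcal C}}l$ from $z^{-\kappa}$ down to $z^{-n-1+\epsilon}$ in finitely many steps. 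This yields $l=\tilde f+\tilde\lambda z+O(e^{-\delta z})$ with $|\tilde f|\leqslant z^\epsilon$; a boundary-flux computation on $\{t=-\log\varepsilon\}$ then forces $\tilde\lambda=0$, and only after that does the integration by parts of $l(\tilde\omega^n-\omega^n)=0$ close (the boundary term vanishes because $|l|\,|d^cl|$ now decays). Your parabolicity endgame would also work once $l$ is bounded, but the substance lies in the iteration and in identifying and killing the $\lambda z$ mode; a generic Caccioppoli bound on annuli does not see this structure.
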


One could always have uniqueness if the metrics in Theorem \ref{unique theorem 1} are close with rate $O(r^{-N})$ for some $N$ large enough. Nevertheless, this rate is much faster than the decay rate of our metric constructed in Theorem \ref{main theorem 1}. The proof of uniqueness theorem under any polynomial closeness requires two ingredients: the $i\pp\bp$ lemma with the $L^2$ estimate on $X$, and the Liouville type theorem on $\Ca$.

\subsection*{Outline of the paper}
The paper is organized as follows. In Section \ref{modelspace}, we provide an introductory overview of the Calabi ansatz, denoted by $\omega_{\Ca}$, within an open neighborhood $\mathcal{C}$ of the divisor $D$ in its normal bundle, excluding the zero section. Our exposition primarily adheres to the notation and discussion presented by Hein, Sun, Viaclovsky, and Zhang \cite{hsvz1}. Furthermore, we briefly review the analytical framework developed by Tian, Yau, and Hein \cite{HeinThesis}, list some geometric properties of the Calabi model space that facilitates the application of uniform elliptic estimates later.

In section \ref{Poisson equation}, we have the solution of the Poisson equation with appropriate weighted estimates. The method of variable separation, as detailed by Sun and Zhang \cite{SZ}, allows us to simplify the Poisson equation in the model space into a particular form of ordinary differential equation. Leveraging the solutions' estimates for these ordinary differential equations, as presented in Appendix \ref{estimate of ODE}, we construct an inversion of the Laplacian operator within suitably weighted spaces. This enables us to initiate the iterative processes detailed in Sections \ref{decay faster than quadratic} and \ref{integral condition}.

In Sections \ref{decay faster than quadratic} and \ref{integral condition}, we construct a good background metric $\omega$ within the cohomology class $\mathfrak{k}$. We solve the Poisson equation on the model space iteratively to enhance the decay rate of the Ricci potential of $\omega$, as detailed in Section \ref{decay faster than quadratic}. In Section \ref{integral condition}, we refine the K\"ahler potential by incorporating the harmonic moment map $z$ alongside other pluri-subharmonic functions. This adjustment ensures not only the positivity of $\omega$ but also its compliance with the integral condition outlined in Tian-Yau-Hein's package \cite{HeinThesis}. The iterative method adopted here is inspired by Conlon and Hein \cite{conlon2013asymptotically}. The specific technique of modifying the potential via the harmonic moment map $z$ is equivalent to choosing appropriate scaling of the metric $h_D$ on the normal bundle in Hein, Sun, Viaclovsky, and Zhang \cite{hsvz2}.

In Section \ref{behavior of beta and main theorem}, we deform our good background metric $\omega$ to a genuine Calabi-Yau metric on $X$ by Tian-Yau-Hein's package. We also show that the perturbed metric is weak asymptotically Calabi with rate $1$. This requires a slight generalization of Hein's decay result in \cite{HeinThesis}.

In Section \ref{Uniqueness}, we discuss the uniqueness with restricted asymptotics of the Calabi-Yau metric in the fixed class. We first prove the $i\pp\bp$-lemma on $X$ based on the global H\"ormander $L^2$ estimate. Then by our solution of the Poisson equation and the behavior of the harmonic function on the model space we can deduce the global $C^0$ estimate to do integration by parts.

In Section \ref{discussions}, we present some examples of Calabi-Yau manifolds which are weak asymptotically Calabi but not asymptotically Calabi under a fixed diffeomorphism. We also make some conjectures about the stronger uniqueness theorem and the compactification and classification of weak asymptotically Calabi manifold or under even weaker condition. 

In the following sections, $C$ and $\epsilon$ will be two uniform constants that may vary.

\subsection*{Other works on complete non-compact Calabi-Yau manifolds}
There are many progress in the exploration of new complete Calabi-Yau manifolds, extending the seminal work of Tian and Yau to cases where $D$ is singular. Collins-Li \cite{collins2022complete} constructed new complete Calabi-Yau metrics when $D$ consists of two proportional transversely intersecting smooth divisors. The metric in their construction is $i\pp\bp$ exact and polynomially closed to the generalized Calabi anstaz. Later Collins-Tong-Yau \cite{collins2024free} solved a certain free boundary Monge-Amp\`ere equation crucial to the inductive strategy proposed in \cite{collins2022complete} to deal with the general case when $D$ is simple normal crossing. 

There are also many interesting works on this problem based on Tian-Yau-Hein's package. For example, the non-flat Calabi-Yau metric on $\mathbb{C}^n$ constructed by Li \cite{li2019new}, Sz{\'e}kelyhidi \cite{szekelyhidi2020new} and Conlon-Rochon \cite{conlon2017new} with maximal volume growth, and Min \cite{min2023construction} with volume growth $2n-1$ when $n$ is even. Those works constructed Calabi-Yau with singular tangent cone at infinity. 

Recently, Apostolov-Cifarelli \cite{apostolov2023hamiltonian} constructed new complete Calabi-Yau metrics on $\mathbb{C}^n$ with volume growth $2n-1$. Their new method using toric geometry and Hamiltonian 2-forms is significantly different from Tian-Yau-Hein package and produces exotic complete Calabi-Yau metrics with interesting behavior at infinity.

\subsection*{Acknowledgement}

The author is deeply grateful to Professor Song Sun for suggesting this problem, enlightening discussion and constant support. The author thanks Junsheng Zhang for fruitful conversation and encouragement, and thanks Yueqing Feng and Hongyi Liu for reading the draft of the paper and many helpful comments.The author also thanks IASM for their hospitality during the visit when the research was partially carried out, and the NSF for the generosity of the grant DMS-2304692.

\section{Calabi model space}\label{modelspace}

Let us give a brief introduction of Calabi ansatz and some notations we will use later. The notations in this section mainly follow Hein-Sun-Viaclovsky-Zhang \cite[Section 3]{hsvz1}.
\subsection{Calabi ansatz}
Let $M$ be an $n$-dimensional compact K\"ahler manifold with $n\geqslant 2$, $D$ be a smooth anticanonical divisor with ample normal bundle $N_D$ and let $X=M\setminus D$ denote the complement of $D$ in $M$. By adjunction formula we know that $D$ has trivial canonical bundle with a nowhere-vanishing holomorphic volume form $\Omega_D$ such that $$(2\pi c_1(N_D))^{n-1}=\int_Di^{(n-1)^2}\Omega_D\wedge\ols\Omega_D.$$ Hence by Yau's theorem, up to scaling we have a unique hermitian metric $h_D$ on $N_D=\left.-K_{M}\right|_D$ with curvature form $\omega_D$ on $D$ in the class $2\pi c_1(D)$ satisfying 
$$\omega_D^{n-1}=i^{(n-1)^2}\Omega_D\wedge\ols\Omega_D.$$
We fix such an $h_D$. For any point $\xi\in N_D$, let $t = -\log \vert \xi \vert^2_{h_D}$ and $z = t^{\frac{1}{n}}$ be functions on the complement of the zero section in the total space $N_D \setminus D$. 
Let $\mathcal{C}=\{0<\left|\xi\right|_{h_D}<1\}\subseteq N_D$ be the disc bundle over $D$ with complex structure $I_\Ca$ restricted from $N_D$. On $\mathcal{C}$, we have a Calabi-Yau metric given by the Calabi ansatz:
$$\omega_{\mathcal{C}} = \frac{n}{n+1}i\pp\bp (t^{\frac{n+1}{n}})= zi\pp\bp t +\frac{1}{nz^{n-1}}i\pp t\wedge\bp t.$$ 
Let $\Omega_{\mathcal{C}}$ denotes the unique holomorphic $(n,0)$-form on $N_D$ such that $$Z\righthalfcup \Omega_{\mathcal{C}} = \pi^* \Omega_D,$$ where $Z$ denotes the holomorphic vector field generated by the  scalar multiplication along the fiber direction and $\pi:N_D\to D$ is the projection map. And the data $(\Ca, I_\Ca, \omega_\Ca, \Omega_\Ca)$ is called \textit{Calabi model space}.

On $M$ we can also choose a holomorphic section $S$ of $-K_M$ to be the defining function of $D$ such that $S^{-1}$ can be seen as an $(n,0)$-form $\Omega_X$ on $X$ with a simple pole and residue $\Omega_D$ along $D$. We choose a metric $h_M$ of $-K_M$ such that $ \left.h_M\right|_D = h_D $. Then we can construct the following $(1,1)$-form on $X$:
\begin{align*}
    \omega_X = \frac{n}{n+1} i\pp \bp\left(-\log \left|S\right|_{h_M}^2\right)^{\tfrac{n+1}{n}}.
\end{align*}
As proved in Hein-Sun-Viaclovsky-Zhang \cite[Proposition 3.4.]{hsvz1}, it is asymptotically Calabi in the following way:
\begin{proposition}\label{exponential close}
	The complex structure $I_X$ and $I_C$, metric $\omega_\mathcal{C}$ and $\omega_{X}$ and canonical form $\Omega_X$ and $\Omega_{\mathcal{C}}$ are exponentially closed. To be more precise: there exists a compact set $K$ in $X$, a compact set $\mathcal{K}$ in $\Ca$,  and a diffeomorphism induced by exponential map $\Phi: \mathcal{C}\setminus \mathcal{K}\to X\setminus K$ such that for all $k \geqslant 0, \epsilon>0$:
$$\left|\nabla_{g_{\mathcal{C}}}^k\left(\Phi^* I_X-I_{\mathcal{C}}\right)\right|_{g_{\mathcal{C}}}+\left|\nabla_{g_{\mathcal{C}}}^k\left(\Phi^* \Omega_X-\Omega_{\mathcal{C}}\right)\right|_{g_{\mathcal{C}}}+\left|\nabla_{g_{\mathcal{C}}}^k\left(\Phi^* \omega_X-\omega_{\mathcal{C}}\right)\right|_{g_{\mathcal{C}}}=O(e^{-(\frac{1}{2}-\epsilon) z^n}).$$
\end{proposition}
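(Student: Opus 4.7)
The strategy is to compare the two sets of data term by term, using that both are built from the same $D$-data $(h_D,\omega_D,\Omega_D)$, and to exploit the identity $|\xi|^2_{h_D}=e^{-t}=e^{-z^n}$. This identity means that every bound of the form $O(|\xi|^N)$ in a fixed smooth metric on $N_D$ translates into $O(e^{-Nz^n/2})$, which dominates every polynomial in $z$ after an arbitrarily small loss in the exponential rate. It therefore suffices to produce such flat bounds with $N$ as large as needed.

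First, I would construct $\Phi$ by a standard tubular neighborhood construction: starting from the exponential map of an auxiliary Hermitian metric on $M$ and correcting by a high-order jet along $D$ (which is possible because $D\subset M$ is a smooth complex submanifold), one obtains a diffeomorphism that is holomorphic to any prescribed finite order along the zero section. By the same token, one may replace $h_M$ with a Hermitian metric on $-K_M$ that agrees with $\pi^*h_D$ to high order along $D$ without disturbing the boundary condition $h_M|_D=h_D$; this ensures that $\Phi^*(-\log|S|^2_{h_M})$ and $t$ coincide to high order along the zero section.

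Next, I would verify that each of $\Phi^*I_X-I_\Ca$, $\Phi^*\Omega_X-\Omega_\Ca$ and $\Phi^*\omega_X-\omega_\Ca$ is $O(|\xi|^N)$, with all derivatives, in any fixed smooth background metric on $N_D$. For the complex structure this is immediate from the construction. For $\Omega$, both forms have a simple pole along the zero section with residue $\Omega_D$, so the difference extends holomorphically across $D$ and inherits the high-order vanishing from the jet matching. For $\omega$, both K\"ahler forms are $\tfrac{n}{n+1}i\partial\bar\partial\psi^{(n+1)/n}$ with defining potentials agreeing to high order along $D$; subtracting, expanding $(1+O(|\xi|^N))^{(n+1)/n}$, and applying $i\partial\bar\partial$ yields the desired estimate.

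Finally, I would convert these flat bounds to the $\omega_\Ca$-norm. Since $\omega_\Ca=z\,\pi^*\omega_D+\tfrac{1}{nz^{n-1}}i\partial t\wedge\bar\partial t$, the norms in $\omega_\Ca$ of an adapted smooth frame on $N_D$ and of its dual coframe grow only polynomially in $z$, so converting a bound on a fixed tensor type from the flat metric to $\omega_\Ca$ costs at most a fixed power of $z$. This is absorbed into $e^{-Nz^n/2}$ at the price of an arbitrarily small $\epsilon$-loss, giving the stated rate $e^{-(\tfrac{1}{2}-\epsilon)z^n}$ as soon as $N\geqslant 1$; higher derivatives are handled identically by replacing tensorial bounds with $\omega_\Ca$-covariant ones. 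The main technical difficulty is the careful bookkeeping of these polynomial losses and of the repeated derivatives of the singular potential $t^{(n+1)/n}$, but because $e^{-Nz^n/2}$ beats any polynomial in $z$, all such factors are ultimately absorbed into the $\epsilon$.
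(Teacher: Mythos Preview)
The paper does not give its own proof of this proposition; it is simply quoted from \cite[Proposition~3.4]{hsvz1} with no argument supplied. Your outline is a correct sketch of how that proof runs: the essential mechanism is exactly that $|\xi|_{h_D}=e^{-z^n/2}$, so any tensor that extends smoothly across the zero section and vanishes along $D$ is $O(e^{-z^n/2})$ in a fixed background metric on $N_D$, and passing to $\omega_\Ca$-norms (and taking $\omega_\Ca$-covariant derivatives) costs only powers of $z$, which are absorbed into the $\epsilon$. One small correction: in your treatment of $\Omega$, the difference $\Phi^*\Omega_X-\Omega_\Ca$ extends \emph{smoothly} across $D$, not holomorphically, since $\Phi$ is only holomorphic to finite order along $D$; but smooth extension with first-order vanishing is all your argument actually uses.
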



\subsection{Geometric properties and Tian-Yau-Hein's package}

Let us first list some geometry of Calabi model space directly coming from the formula of $\omega_\Ca$. $\omega_\Ca$ is complete when $\left|\xi\right|_{h_D} \to 0$ and incomplete when $ \left|\xi\right|_{h_D} \to 1 $. It has volume growth of order $\tfrac{2n}{n+1}$ and its sectional curvature decays at the rate $r^{-\frac{2}{n+1}}$. Any distance function on $\Ca$ will be comparable to $z^{\frac{n+1}{2}}$.

Then we introduce some requirements of the base Riemannian manifold $(N, g)$ in Tian-Yau-Hein's package and show that our model space $(\Ca, \omega_\Ca)$ satisfies those properties. One may refer to \cite{HeinThesis} for details and examples.

We begin with the definition of $\mathrm{SOB}(\nu)$ property:
\begin{definition}
    Let $(N,g)$ be a complete noncompact Riemannian manifold with real dimension at least 3. We say $(N,g)$ satisfy $\mathrm{SOB}(\nu)$ condition for some $\nu>0$ if and only if there exists a point $p\in N$ and a positive constant $C>1$ such that
    \begin{enumerate}
        \item Volume growth is at most $\nu$, i.e. $\mathrm{Vol}_g(B(p,R))\leqslant CR^\nu$ for all $R>C$.
        \item $\mathrm{Ric}(x)\geqslant -Cd_g(x,p)^{-2}$, $\forall x \in N$.
        \item $\mathrm{Vol}_g(B(x,(1-\tfrac{1}{C})d_g(x,p)))\geqslant \tfrac{1}{C}d_g(x,p)^\nu$.
        \item For any $D>C$, any two points $x,y\in N$ with $ d_g(p,x)= d_g(p,y) = D$ can be joined by a curve of length at most $C\cdot D$, lying in the annulus $A(p,\tfrac{1}{C}D,CD):=\{x\in N|\tfrac{1}{C}D<d_g(p,x)<CD\}$.
    \end{enumerate}
\end{definition}
\begin{remark}
    In the original definition of $\mathrm{SOB}(\nu)$ property in \cite{hein2012gravitational}, we need that the annulus $A(p,s,t)$ is connected for any $t>s>0$. To apply Theorem \ref{Hein's Package} it suffices to check the RCA property (4) instead of the connectivity of the annulus.
\end{remark}

\begin{proposition}\label{model space SOB}
	$(\Ca,\omega_\Ca)$ has $\mathrm{SOB}(\tfrac{2n}{n+1})$ property.
\end{proposition}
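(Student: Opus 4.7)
The plan is to verify each of the four conditions in the $\mathrm{SOB}(\nu)$ definition for $\nu=\frac{2n}{n+1}$, leveraging the explicit warped-product structure of $(\mathcal{C},\omega_\mathcal{C})$ recorded in the excerpt: the radial variable satisfies $r\sim z^{(n+1)/2}$, the level sets $\{z=z_0\}$ are $S^1$-bundles over $D$ of diameter $\sim\sqrt{z_0}$ (dominated by the base term $z_0\,\omega_D$), and the volume form $\omega_\mathcal{C}^n$ computes to a positive multiple of $dt\wedge d\theta\wedge\pi^*\omega_D^{n-1}$ once the $z^{n-1}$ factors from $(z\,\omega_D)^{n-1}$ and from $\frac{1}{nz^{n-1}}i\partial t\wedge\bar\partial t$ cancel.

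The Ricci bound in (2) is automatic: by the very construction of the Calabi ansatz, $\omega_\mathcal{C}^n$ is a constant multiple of $i^{n^2}\Omega_\mathcal{C}\wedge\overline{\Omega_\mathcal{C}}$, so $\mathrm{Ric}(\omega_\mathcal{C})\equiv 0$. The volume growth bound in (1) comes straight from the volume form: integrating over $\{z\le z_0\}$ yields volume $\sim z_0^n$, which in terms of $r\sim z^{(n+1)/2}$ is $\sim r^{2n/(n+1)}$, giving condition (1) with $\nu=\frac{2n}{n+1}$.

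For the reverse volume bound (3), fix $x$ with $d(x,p)=R$, so $z(x)\sim R^{2/(n+1)}$. Because the diameter of the level set $\{z=z(x)\}$ is $\sim\sqrt{z(x)}\sim R^{1/(n+1)}\ll R$, a ball of radius $(1-\frac{1}{C})R$ around $x$ contains a full slab $\{|z-z(x)|\le c\,z(x)\}$ for some small $c>0$. The volume of this slab equals (up to constants) $z(x)^n$, which is $\sim R^{2n/(n+1)}=R^\nu$, as required. For the RCA condition (4), given $x,y$ both at distance $R$, their $z$-values both satisfy $z\sim R^{2/(n+1)}$; one may connect them by first projecting both to a common level set $\{z=z_0\}$ using short radial arcs (length $\ll R$), and then connecting the projected points by a path on this level set, which has diameter $\sim\sqrt{z_0}\sim R^{1/(n+1)}\ll R$. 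The resulting curve stays in an annulus $A(p,R/C,CR)$ and has length $\le CR$.

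I expect no substantial obstacle: the proof is essentially a dimensional/scaling calculation using the formulas $\omega_\mathcal{C}=z\,i\partial\bar\partial t+\frac{1}{nz^{n-1}}i\partial t\wedge\bar\partial t$ and $r\sim z^{(n+1)/2}$. The only point requiring slight care is checking that in the RCA step the radial connecting arcs indeed stay within an annulus of the prescribed form; but since both endpoints already lie on the same distance sphere from $p$, and the level sets $\{z=\text{const}\}$ have diameter much smaller than $R$, this is comfortably satisfied.
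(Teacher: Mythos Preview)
Your proposal is correct and follows essentially the same approach as the paper's proof: both verify the four $\mathrm{SOB}$ conditions directly from the explicit ansatz, using Ricci-flatness for (2), the volume form computation $\omega_{\mathcal C}^n\sim dt\wedge d^ct\wedge(i\partial\bar\partial t)^{n-1}$ and $r\sim z^{(n+1)/2}$ for (1), the inclusion of a $z$-slab of width $\sim z(x)$ into the ball around $x$ (via the level-set diameter $\sim\sqrt{z}$) for (3), and a radial-plus-level-set curve for (4). The paper's writeup makes the annulus/slab comparisons slightly more explicit, but the argument is the same.
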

\begin{proof}
	$(2)$ follows from the Ricci-flat property of $\Ca$. For $(1)$, recall that $\omega_\mathcal{C} = zi\pp\bp t+\frac{1}{nz^{n-1}}i\pp t \wedge\bp t$ and the volume form $\omega_{\Ca}^n = (i\pp\bp t)^{n-1}\wedge i\pp t \wedge\bp t$. The distance function $r_{\omega_\Ca}$ to some fixed point $p$ in $\mathcal{C}$ is comparable to $z^{\frac{n+1}{2}}$. Consequently, one can see that $\tilde{A}(z_1,z_2) = \{x\in \mathcal{C}\;|\;z_1<z(x)<z_2\}$ is comparable to the annulus $A(p,R_1,R_2)$ in the sense that: for any $z_2>z_1>C$ and $R_2>R_1>C$, we have
\begin{align*}
   \tilde{A}(z_1,z_2)&\subseteq A\left(p,\tfrac{1}{C}z_1^{\frac{n+1}{2}},Cz_2^{\frac{n+1}{2}}\right),\\
   A(p,R_1,R_2)&\subseteq \tilde{A}\left(\left(\tfrac{1}{C}R_1\right)^{\frac{2}{n+1}},\left(CR_2\right)^{\frac{2}{n+1}}\right).
\end{align*}
Similarly, for any $R\leqslant C$, we have:
\begin{align*}
    B(p,R)\subseteq \mathcal{K}\cup \tilde{A}\left(0, (CR)^{\frac{2}{n+1}}\right).
\end{align*}
Also, we can see from the ansatz that the diameter of $\{x\in \Ca\,|\,z = z_0\}$  is comparable to $\sqrt{z_0}$, which shows that 
\begin{align*}
    \tilde{A}\left(z(x)-\tfrac{1}{C}\left(z(x)-z(p)\right),z(x)+\tfrac{1}{C}\left(z(x)-z(p)\right)\right)\subset B\left(x,\left(1-\tfrac{1}{C}\right)d_g(x,p)\right).
\end{align*}
With all these equivalence, we know that
\begin{align*}
    \mathrm{Vol}_g(B(p,R)) \leqslant C+ \int_{\left\{-(CR)^{\frac{2n}{n+1}}\leqslant \,t\, \leqslant (CR)^{\frac{2n}{n+1}}\right\}} (i\pp\bp t)^{n-1}\wedge dt\wedge d^ct  \leqslant  CR^{\frac{2n}{n+1}}.
\end{align*}
Similarly for $(3)$ we have 
\begin{align*}
	\mathrm{Vol}_g(B\left(x,\left(1-\tfrac{1}{C}\right)d_g(x,p)\right)) \geqslant C\cdot z(x)^{n}\geqslant C\cdot d_g(x,p)^{\frac{2n}{n+1}}.
\end{align*}
To show $(4)$, for $D$ large enough and any two points $x$ and $y$ with $d_g(x,p)= d_g(y,p) = D$, we have $z(x),z(y) \in \left(\tfrac{1}{C}\cdot D^{\frac{2}{n+1}}, C\cdot D^{\frac{2}{n+1}}\right)$. Then by the formula of $\omega_\mathcal{C}$ we can join $x$ and $y$ by a curve of length at most $C\cdot \left( D+D^{\frac{1}{n+1}}\right)$ lying in $A\left(\tfrac{1}{C}\cdot D^{\frac{2}{n+1}}, C\cdot D^{\frac{2}{n+1}}\right)$ and consequently in the annulus $A(p,\tfrac{1}{C}D,CD)$. 

Hence we have the $\mathrm{SOB}(\tfrac{2n}{n+1})$ condition on $(\mathcal{C},\omega_\mathcal{C})$. 
\end{proof}

We continue with the definition of $\operatorname{HMG}(\lambda,k,\alpha)$ property:

\begin{definition}
	We say that $\left(N^n, g\right)$ is $\operatorname{HMG}(\lambda, k, \alpha)$, for some $\lambda \in[0,1]$, $k \in \mathbb{N}_0$, $\alpha \in(0,1)$, if there exist $x_0 \in N$ and $C \geqslant 1$ such that 
	\begin{enumerate}
		\item for every $x \in N$ with $r(x) \geqslant C$ there exists a local holomorphic diffeomorphism $\Phi_x$ from the unit ball $B \subset \mathbb{R}^n$ into $N$ such that $\Phi_x(0)=x$ and $\Phi_x(B) \supset B\left(x, \tfrac{1}{C} r(x)^\lambda\right)$, 
		\item $h:=r(x)^{-2 \lambda} \Phi_x^* g$ satisfies $\operatorname{Inj}(h) \geqslant \tfrac{1}{C}, \tfrac{1}{C} g_{\text {euc}} \leqslant h \leqslant C g_{\text {euc}}$, and $\left\|h-g_{\text {euc}}\right\|_{C^{k, \alpha}\left(B, g_{\text{euc}}\right)} \leqslant C$.
	\end{enumerate}
\end{definition}

Tian-Yau \cite[Proposition 1.2.]{tian1990complete} provides a simple criterion for a complete K\"ahler manifold $(N,\omega)$ to be $\operatorname{HMG}(0,k,\alpha)$. We refer to Hein's thesis \cite[Lemma 4.7.]{HeinThesis} for a slightly generalized statement and sketch of the proof.
\begin{lemma}\label{lemma of C^k,alpha}
	A complete Kähler manifold with $\left|\mathrm{Rm}\right|+\sum_{i=1}^k r^{i \lambda} \left| \nabla^i \mathrm{Scal} \right| \leqslant C r^{-2 \lambda}$ for some $k \in \mathbb{N}_0$ and $\lambda \in[0,1]$ is $\operatorname{HMG}(\lambda, k+1, \alpha)$ for every $\alpha \in(0,1)$.
\end{lemma}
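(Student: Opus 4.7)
The proof is by a rescaling argument that reduces the decay hypothesis to a bounded-geometry statement on a unit ball, followed by the bounded-curvature version of Tian-Yau's local chart construction plus elliptic bootstrapping.

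First, I would fix $x \in N$ with $r_0 := r(x)$ large and rescale $\tilde g := r_0^{-2\lambda} g$. The unit $\tilde g$-ball around $x$ corresponds to the $g$-ball $B_g(x, r_0^\lambda)$, on which $r$ stays comparable to $r_0$ (since $\lambda \leqslant 1$ and $r_0$ is large), so the hypothesis translates into
$$|\mathrm{Rm}_{\tilde g}|_{\tilde g} + \sum_{i=1}^k |\tilde\nabla^i \mathrm{Scal}_{\tilde g}|_{\tilde g} \leqslant C$$
uniformly on $B_{\tilde g}(x,1)$, independent of $x$. It therefore suffices to show the abstract statement: a Kähler manifold with bounded curvature and bounded covariant derivatives of scalar curvature up to order $k$ on a unit ball admits a holomorphic chart of definite size in which $g$ is $C^{k+1,\alpha}$-close to Euclidean.

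For the existence of the chart itself at the $C^{1,\alpha}$ level, I would invoke Tian-Yau's Proposition 1.2 applied to $(N,\tilde g)$ at $x$. The bounded curvature of $\tilde g$, combined with completeness (which transfers through the rescaling) and the Kähler structure, yields a local holomorphic diffeomorphism $\Phi_x: B \subset \mathbb{C}^n \to N$ with $\Phi_x(0)=x$, $\Phi_x(B) \supset B_{\tilde g}(x, \tfrac{1}{C})$, and $h := \Phi_x^* \tilde g$ satisfying the injectivity-radius and bilipschitz conditions in the HMG definition together with $C^{1,\alpha}$ control of $h - g_{\text{euc}}$. Rescaling back, $\Phi_x(B) \supset B_g(x, \tfrac{1}{C} r_0^\lambda)$ and $h = r_0^{-2\lambda} \Phi_x^* g$, which is precisely the setting of the HMG chart.

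The remaining step is upgrading the regularity of $h$ from $C^{1,\alpha}$ to $C^{k+1,\alpha}$. In holomorphic coordinates the Kähler potential $K$ with $g_{i\bar j} = \partial_i\partial_{\bar j}K$ satisfies the elliptic Monge-Ampère type equation $-g^{i\bar j}\partial_i\partial_{\bar j}\log\det(\partial_k\partial_{\bar l}K) = \mathrm{Scal}$, and correspondingly the metric components $g_{i\bar j}$ satisfy a quasilinear second-order system whose inhomogeneous term involves the Ricci tensor. Starting from the $C^{1,\alpha}$ baseline and iterating Schauder estimates using the given bounds on $\tilde\nabla^i\mathrm{Scal}$ for $1\leqslant i \leqslant k$ gains one derivative of regularity at each stage, ultimately giving $\|h - g_{\text{euc}}\|_{C^{k+1,\alpha}(B)} \leqslant C$. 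The main obstacle is precisely this bootstrap, because the hypothesis controls derivatives of the scalar invariant $\mathrm{Scal}$ rather than the full curvature tensor $\nabla^i\mathrm{Rm}$: one cannot simply read off higher-order Taylor coefficients of $g$ at the origin from covariant derivatives of $\mathrm{Rm}$, and must instead propagate scalar-level regularity through the nonlinear Monge-Ampère equation and the Kähler (Bianchi) identities to recover regularity of the full metric tensor.
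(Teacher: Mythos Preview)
Your proposal is correct and matches the standard argument. The paper itself does not supply a proof of this lemma: it simply cites Tian--Yau \cite[Proposition~1.2]{tian1990complete} for the base case and refers the reader to Hein's thesis \cite[Lemma~4.7]{HeinThesis} for the generalized statement and its proof sketch. What you have written is precisely that argument --- rescale by $r_0^{-2\lambda}$ to reduce to bounded geometry on a unit ball, invoke Tian--Yau's holomorphic chart construction for the $C^{1,\alpha}$ control, then bootstrap via Schauder using the scalar-curvature equation $\Delta_g\log\det g_{i\bar j}=-\mathrm{Scal}$ to upgrade regularity of $\mathrm{Ric}_{i\bar j}=-\partial_i\partial_{\bar j}\log\det g$ and feed it back into the second-order system for $g_{i\bar j}$. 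Your identification of the main subtlety (that only $\nabla^i\mathrm{Scal}$, not $\nabla^i\mathrm{Rm}$, is assumed, so one must pass through the Monge--Amp\`ere/Ricci-potential equation rather than differentiating curvature directly) is exactly the point of the Kähler hypothesis here.
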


\begin{proposition}
	$(\Ca,\omega_\Ca)$ has $\mathrm{HMG}(\tfrac{1}{n+1},k,\alpha)$ property, for any $k\in \mathbb{N}_0$ and $\alpha \in(0,1)$.
\end{proposition}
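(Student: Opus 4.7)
The plan is to apply Lemma \ref{lemma of C^k,alpha} directly with $\lambda = \tfrac{1}{n+1}$. Since $\omega_\Ca$ is a Calabi-Yau metric, it is Ricci-flat, so in particular the scalar curvature $\mathrm{Scal}$ and all of its covariant derivatives vanish identically on $\Ca$. Consequently the sum $\sum_{i=1}^k r^{i\lambda}|\nabla^i \mathrm{Scal}|$ in the hypothesis of Lemma \ref{lemma of C^k,alpha} is zero, and the only estimate that needs to be verified is the curvature bound
\begin{equation*}
|\mathrm{Rm}|_{\omega_\Ca} \leqslant C\,r^{-\frac{2}{n+1}}.
\end{equation*}

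This estimate has in fact already been asserted in the paragraph just before the statement of the proposition, where it is recorded that the sectional curvature of $\omega_\Ca$ decays at the rate $r^{-\frac{2}{n+1}}$. To make the present proof self-contained I would derive it from the explicit ansatz $\omega_\Ca = z\,i\pp\bp t + \tfrac{1}{n z^{n-1}} i\pp t\wedge\bp t$ together with the identification $r\sim z^{\frac{n+1}{2}}$: rescaling $\omega_\Ca$ fiberwise by the natural $\mathbb{R}_{>0}$-action of the normal bundle shows that the geometry at height $z$ is, up to a $z$-dependent conformal factor of order $z$, uniformly bounded in $C^\infty$ by the fixed geometry on $D$, which yields $|\mathrm{Rm}|_{\omega_\Ca} \lesssim z^{-1} \sim r^{-\frac{2}{n+1}}$.

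With this bound in hand, Lemma \ref{lemma of C^k,alpha} applied with $\lambda = \tfrac{1}{n+1}$ and with any $k\in \mathbb{N}_0$ yields that $(\Ca, \omega_\Ca)$ is $\mathrm{HMG}(\tfrac{1}{n+1}, k+1, \alpha)$ for every $\alpha \in (0,1)$. Since $\mathrm{HMG}(\lambda, k+1, \alpha)$ implies $\mathrm{HMG}(\lambda, k, \alpha)$, this gives the claim for all $k\in\mathbb{N}_0$.

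The main obstacle is really just the curvature decay estimate; once that is in place the result is a direct invocation of the general lemma. The only subtlety to keep in mind is that the natural scale on which the geometry looks uniform is the fiber radius $z^{1/2}$ rather than $r^\lambda$, but since $r\sim z^{\frac{n+1}{2}}$ one has $z^{1/2} \sim r^{\frac{1}{n+1}} = r^\lambda$, so the scales match exactly and no further work is needed.
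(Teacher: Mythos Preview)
Your plan to invoke Lemma~\ref{lemma of C^k,alpha} directly overlooks one of its hypotheses: the lemma is stated for \emph{complete} K\"ahler manifolds, and $(\Ca,\omega_\Ca)$ is not complete --- it was recorded just above that $\omega_\Ca$ is incomplete as $|\xi|_{h_D}\to 1$. So the lemma cannot be applied as a black box, and the sentence ``the only estimate that needs to be verified is the curvature bound'' is not correct.

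This is not a purely formal objection. The proof of Lemma~\ref{lemma of C^k,alpha} uses completeness precisely to guarantee a uniform lower bound on the injectivity radius of the local universal cover after rescaling by $r(x)^{-2\lambda}\sim z^{-1}$. Under this rescaling the horizontal direction over $D$ indeed becomes uniform of scale $O(1)$ --- this is what your closing remark about $z^{1/2}\sim r^{1/(n+1)}$ captures --- but the $S^1$ fibre of the circle bundle $\Ca\to D$ has length of order $z^{-(n-1)/2}$ in $\omega_\Ca$, hence of order $z^{-n/2}$ after rescaling, and therefore collapses as $z\to\infty$. One must argue separately that this $S^1$ is the \emph{only} collapsing direction, so that passing to the local universal cover unwinds it and yields a volume non-collapsed geometry with uniform curvature bounds, and hence uniform injectivity radius. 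The paper's proof does exactly this: rather than citing the lemma, it re-runs its proof and supplies this missing verification using the explicit circle-bundle structure of $\Ca$.
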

\begin{proof}
	The proof goes almost verbatim with the proof of Lemma \ref{lemma of C^k,alpha}. The proof of Lemma \ref{lemma of C^k,alpha} only used the completeness to guarantee that the injectivity radius of local universal cover around $x_0$ has uniform lower bound independent of $x_0$. Since $\Ca$ is the disc bundle over $D$, after we do rescaling by $\tilde\omega_\Ca = z(x_0)^{-1}\omega_\Ca$, the $S^1$ action gives the only collapsing direction which disappears after passing to the local universal cover. So the local universal cover has uniform curvature bound and is volume non-collapsing, which leads to uniform injectivity radius lower bound.
	\end{proof}

Now we are ready to present the following result taken from Hein's thesis \cite{HeinThesis} which is a powerful tool to give the existence of Calabi-Yau metric on complete noncompact K\"ahler manifold.
\begin{theorem}[Tian-Yau-Hein's Package]\label{Hein's Package}
	Let $(X^n, \omega)$ be a complete noncompact K\"ahler manifold, which satisfies the condition $\mathrm{SOB}(\nu)$ and $\mathrm{HMG}(0,3,\alpha)$ for some $\nu>0$, $0<\alpha<1$. Let $r$ be the distance function to a fixed point $p \in X$ with respect to the metric $\omega$. Let $f \in C^{2, \alpha}(X)$ satisfy $|f| \leqslant C r^{-\mu}$ on $\{r>1\}$ for some $\mu>2$ and $\int_X\left(e^f-1\right) \omega^n=0$. Then there exist $\bar{\alpha} \in(0, \alpha]$ and $u \in C^{4, \bar{\alpha}}(X)$ such that $\left(\omega+i \partial \bar{\partial} u\right)^n=e^f \omega^n$. Moreover $\int_X|\nabla u|^2 \omega^n<\infty$. If in addition $f \in C_{\mathrm{loc}}^{k, \bar{\alpha}}(X)$ for some $k \geqslant 3$, then all such solutions $u$ belong to $C_{\mathrm{loc}}^{k+2, \bar{\alpha}}(X)$.
\end{theorem}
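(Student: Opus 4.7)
The plan is to solve the Monge-Amp\`ere equation $(\omega + i\pp\bp u)^n = e^f\omega^n$ by the continuity method. Consider the one-parameter family
\begin{equation*}
(\omega + i\pp\bp u_t)^n = e^{tf + c_t}\omega^n, \qquad t \in [0,1],
\end{equation*}
where $c_t$ is a constant adjusted to make the equation solvable; by the hypothesis $\int_X(e^f-1)\omega^n = 0$ we have $c_0=c_1=0$. The trivial solution $u_0\equiv 0$ starts the process, and openness at $t_0$ follows from an implicit function theorem argument in suitable weighted H\"older spaces: the linearization is $\Delta_{\omega_{t_0}}$, which is an isomorphism once one knows that $\omega_{t_0}$ inherits $\mathrm{SOB}(\nu)$ and $\mathrm{HMG}(0,3,\alpha)$ (up to constants) from $\omega$, and this is automatic once $u_{t_0}$ is in $C^{2,\alpha}$ with small norm relative to the background geometry. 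The bulk of the proof is the closedness step, i.e.\ uniform a priori $C^{2,\bar\alpha}$ estimates independent of $t$.

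The $C^0$ estimate is the principal difficulty in the non-compact setting. Following Yau's compact argument, one tests the equation against powers of $u_t$ to obtain $L^p$ bounds and then iterates to $L^\infty$. The classical Sobolev inequality is replaced by a weighted Sobolev (or Nash) inequality derived from $\mathrm{SOB}(\nu)$ via a Buser-type argument on annuli $A(p, \tfrac{1}{C}R, CR)$. The decay assumption $|f|\leqslant Cr^{-\mu}$ with $\mu>2$ is crucial: it ensures that $e^f - 1$ is integrable against the volume form with enough spare room (even when $\nu\leqslant 2$) so that the weighted Moser-type iteration closes uniformly in $t$. This is the step that consumes the strict inequality $\mu>2$.

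Once $\|u_t\|_{L^\infty}\leqslant C$ is in hand, the second-order estimate follows from Yau's computation of $\Delta_{\omega_t}\log \operatorname{tr}_\omega\omega_t$, using the lower Ricci bound $\mathrm{Ric}(\omega) \geqslant -Cr^{-2}$ encoded in $\mathrm{SOB}(\nu)$. On a non-compact manifold one applies the maximum principle to $\operatorname{tr}_\omega\omega_t - Au_t$ together with an Omori-Yau-type argument to locate near-maxima, using the already-known uniform bound on $u_t$ to absorb error terms. Once $\omega_t$ is uniformly equivalent to $\omega$, the $\mathrm{HMG}(0,3,\alpha)$ condition provides uniform holomorphic charts on unit Euclidean balls in which the pulled-back equation is a uniformly elliptic complex Monge-Amp\`ere equation with $C^{0,\alpha}$ coefficients; Evans-Krylov yields a uniform interior $C^{2,\bar\alpha}$ bound, and Schauder bootstrapping of the differentiated equation upgrades $u$ to $C^{4,\bar\alpha}$ (and to $C^{k+2,\bar\alpha}_{\mathrm{loc}}$ when $f\in C^{k,\bar\alpha}_{\mathrm{loc}}$).

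The hardest step I expect is the $C^0$ estimate: on a non-compact manifold there is no compactness shortcut, and one must carefully balance the weighted Sobolev inequality coming from $\mathrm{SOB}(\nu)$ against the decay of $f$ so that the iteration produces bounds that are uniform in $t$. Finally, the energy bound $\int_X|\nabla u|^2\omega^n<\infty$ is obtained a posteriori by pairing the solution $u$ with the equation and integrating by parts: the $C^0$ bound on $u$ and the integrability of $e^f-1$ kill the boundary contributions at infinity through an exhaustion by sublevel sets of $r$, leaving precisely the gradient $L^2$ identity.
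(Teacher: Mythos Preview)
The paper does not prove this theorem. Theorem~\ref{Hein's Package} is quoted verbatim as a black box from Hein's thesis (``the following result taken from Hein's thesis \cite{HeinThesis}''), and no argument is given or sketched anywhere in the paper; it is simply applied in Section~\ref{behavior of beta and main theorem} once the background metric has been prepared. So there is nothing in the paper for your proposal to be compared against.

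That said, a brief comment on your outline. The overall architecture you describe --- weighted Sobolev inequality from $\mathrm{SOB}(\nu)$, Moser iteration for the $C^0$ bound using $\mu>2$, Yau's $C^2$ estimate, Evans--Krylov in the $\mathrm{HMG}$ charts --- is indeed the content of Tian--Yau--Hein. However, the global continuity path $(\omega+i\pp\bp u_t)^n=e^{tf+c_t}\omega^n$ on the non-compact manifold is not how the argument is actually run: on $X$ the linearization $\Delta_{\omega_{t_0}}$ is \emph{not} an isomorphism on any reasonable weighted H\"older space (constants, and in low $\nu$ even nonconstant harmonic functions, obstruct surjectivity/injectivity), so the openness step as you wrote it does not go through. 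The approach in \cite{tian1990complete,HeinThesis} is instead to solve Dirichlet problems on a compact exhaustion, obtain a priori estimates that are uniform in the exhaustion parameter (this is where the $\mathrm{SOB}(\nu)$ weighted Sobolev inequality and $\mu>2$ enter exactly as you describe), and pass to a diagonal limit. If you replace your continuity-method paragraph by that exhaustion scheme, the rest of your plan matches the literature.
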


\begin{remark}
	We can not directly use Theorem \ref{Hein's Package} because in our setting, the decay rate of the class $\mathfrak{k}$ is only $r^{-\frac{2}{n+1}}$ which is slower than $r^{-2}$. So we need to modify the representative $\beta$ by $i\pp\bp u$ for some function $u$ on $X$, which comes from the suitable solution of Poisson equation on model space $\Ca$, which we will discuss in section \ref{Poisson equation}.
\end{remark}

\subsection{Uniform elliptic estimates}
The previous properties are mainly used to guarantee that we have weighted Sobolev inequality, weighted H\"older space, and can do weighted elliptic estimates. 

Recall that the $C^{k,\alpha}$ norm of a function $f$ on a ball $B_g(x,r)$ inside a manifold $(X,g)$ is 
\begin{align*}
    \left\|f\right\|_{C^{k,\alpha}(B_g(x,r))} = \sum_{j=1}^{k-1}\left\|\nabla_g^j f\right\|_{C^0(B_g(x,r))} + \left\|\nabla_g^k f\right\|_{C^{0,\alpha}(B_g(x,r))}.
\end{align*}

If we scale the metric by a constant $\frac{1}{z_0}$, i.e. $\tilde{g} = \frac{1}{z_0} g$, we then have 
\begin{align*}
    &\left|\nabla_{\tilde{g}}^j f\right|_{\tilde{g}} = z_0^{\frac{j}{2}}\cdot |\nabla_g^j f|_g,\\
    &\left\|f\right\|_{W^{k,p}(B_{\tilde{g}}(x,r))} = \sum_{j=0}^kz_0^{\frac{j}{2}}\left\|\nabla_g^j f\right\|_{L^p(B_g(x,\sqrt{z_0}r))},\\
    &\left\|f\right\|_{C^{k,\alpha}(B_{\tilde{g}}(x,r))} = \sum_{j=0}^{k-1}z_0^{\frac{j}{2}}\left\|\nabla_g^j f\right\|_{C^0(B_g(x,\sqrt{z_0}r))} + z_0^{\frac{k+\alpha}{2}}\left\|\nabla_g^k f\right\|_{C^{0,\alpha}(B_g(x,\sqrt{z_0}r))}.
\end{align*}
Now we can prove the uniform Schauder estimate here for future reference:


\begin{proposition}\label{uniform schauder estimates}
	Let $(N,g)$ be a manifold satisfying $\mathrm{HMG}(\lambda,k,\alpha)$ property. Let $u$ and $v$ be smooth functions on $M$ such that $\Delta_g u = v$. Then there exist some constants $C_k$ such that
	\begin{align*}
r(x)^{k\lambda}\left\|\nabla^k u\right\|_{C^0(B_g(x,r(x)^\lambda))}\leqslant C_{k} \left(\left\| u\right\|_{C^0(B_g(x,r(x)^\lambda))}+\sum_{i=0}^{k-1}r(x)^{(i+2)\lambda}\left\|\nabla^i v\right\|_{C^0(B_g(x,r(x)^\lambda))}\right).
\end{align*}
\end{proposition}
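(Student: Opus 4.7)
The plan is to pull back to a fixed Euclidean ball via the chart provided by $\mathrm{HMG}(\lambda,k,\alpha)$, apply the standard interior Schauder estimate there, and then undo the scaling using the formulas listed just before the proposition.

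Fix $x\in N$ with $r(x)\geqslant C$ and let $\Phi_x\colon B\to N$ be the HMG chart, so that $\Phi_x(0)=x$, $\Phi_x(B)\supset B_g(x,\tfrac{1}{C}r(x)^\lambda)$, and $h:=r(x)^{-2\lambda}\Phi_x^*g$ is uniformly $C^{k,\alpha}$-close to the Euclidean metric on $B$. Set $\tilde u=u\circ\Phi_x$ and $\tilde v=v\circ\Phi_x$. Because the Laplacian scales as $\Delta_{cg}=c^{-1}\Delta_g$, the equation $\Delta_g u=v$ pulls back and rescales to
\begin{equation*}
\Delta_h\tilde u \;=\; r(x)^{2\lambda}\,\tilde v \qquad \text{on }B,
\end{equation*}
with $\Delta_h$ a uniformly elliptic operator whose coefficients are uniformly bounded in $C^{k-1,\alpha}(B)$.

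Next, on a pair of concentric balls $B'\Subset B$ the classical interior Schauder estimate yields
\begin{equation*}
\|\tilde u\|_{C^{k,\alpha}(B')} \;\leqslant\; C\Bigl(\|\tilde u\|_{C^0(B)}+r(x)^{2\lambda}\|\tilde v\|_{C^{k-2,\alpha}(B)}\Bigr).
\end{equation*}
On the bounded Euclidean domain $B$ one has $\|\tilde v\|_{C^{k-2,\alpha}(B)}\leqslant C\sum_{i=0}^{k-1}\|\nabla^{i}_{\mathrm{euc}}\tilde v\|_{C^{0}(B)}$, since the $\alpha$-H\"older seminorm of $\nabla^{k-2}\tilde v$ is dominated by $\|\nabla^{k-1}\tilde v\|_{C^0}$. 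Discarding the lower order terms on the left and keeping only $\|\nabla^{k}_{\mathrm{euc}}\tilde u\|_{C^0(B')}$, we obtain a bound purely in terms of $C^0$-norms.

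Finally, use the scaling formulas stated in the text, $|\nabla^j_h\tilde u|_h=r(x)^{j\lambda}(|\nabla^j_g u|_g\circ\Phi_x)$ and likewise for $\tilde v$, together with the uniform equivalence of $h$ with $g_{\mathrm{euc}}$ and the uniform $C^{k,\alpha}$ bounds of the Christoffel symbols of $h$ to interchange $\nabla_{\mathrm{euc}}$ and $\nabla_h$ at the cost of the constant $C_k$. This converts the Euclidean bound into exactly the claimed weighted inequality on $\Phi_x(B')\supset B_g(x,\tfrac{1}{C'}r(x)^\lambda)$. The only nuisance is that the proposition asserts the estimate on the slightly larger ball $B_g(x,r(x)^\lambda)$, which is handled by covering it with a uniformly bounded number of overlapping HMG charts $\Phi_{x_j}$ centered at points $x_j$ with $r(x_j)\sim r(x)$; this is possible because the bounded geometry at scale $r(x)^\lambda$ given by HMG makes the packing number uniform in $x$. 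The main (and in fact only) nontrivial input is the standard Schauder estimate, so once the HMG chart is set up everything else is bookkeeping of scaling factors and constants.
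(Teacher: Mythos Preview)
Your proposal is correct and follows the same overall strategy as the paper: pull back through the $\mathrm{HMG}$ chart, rescale so that the equation lives on a unit ball with uniformly controlled coefficients, apply a standard interior elliptic estimate, and then undo the scaling. The only difference is in the choice of ``standard estimate'': you invoke the interior Schauder estimate directly in $C^{k,\alpha}$, whereas the paper uses the $W^{2,q}$ estimate followed by Sobolev embedding and then bootstraps by differentiating the equation. Both routes are equally valid here; your Schauder approach is slightly more direct, while the paper's $L^p$ route avoids needing the H\"older seminorm of $\tilde v$ as an intermediate quantity (which you then have to bound by $\|\nabla^{k-1}\tilde v\|_{C^0}$ anyway). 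Your closing covering argument to pass from $B_g(x,\tfrac{1}{C'}r(x)^\lambda)$ to the full ball $B_g(x,r(x)^\lambda)$ is a detail the paper glosses over, so if anything you are being more careful than the original.
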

\begin{proof}
	Fix any $x$ in $N$. Let $h = r(x)^{-2\lambda}\Phi^*g$ be the rescaled pull back metric on the unit ball $B(0,1)$ in $\mathbb{R}^n$, let $\tilde u = \Phi_x^* u$, $\tilde v = r(x)^{2\lambda}\Phi_x^* v$. Since we use the pull back metric, the Laplacian is preserved. We have $\Delta_h \tilde u = \tilde v$.
	Write this elliptic equation under the Euclidean coordinate, with $\|\nabla^k h\|_{g_{\text{euc}}}\leqslant C(k)$ for any integer $k\geqslant 0$, we have $\tilde u$ satisfies the following elliptic equation 
	\begin{align}\label{poisson equation euclidean}
    \frac{1}{\sqrt{\operatorname{det} h}} \frac{\partial}{\partial x^i}\left(h^{ij}\sqrt{\operatorname{det} h}\frac{\partial}{\partial x^j}\tilde u\right) = \tilde v.
	\end{align}
	By the standard elliptic estimates on the Euclidean space and passing to the original metric $g$ we get the required estimate. To be more precise, by $W^{2,q}$ estimates we know that there exists a uniform constant $C_q$ only depends on $q$ such that for any $1< q <+\infty$
\begin{align*}
    \left\|\tilde u\right\|_{W^{2,q}\left(B(0,\frac{1}{2})\right)}\leqslant C_q\cdot \left(\left\|\tilde v\right\|_{C^0(B(0,1))}+\left\|\tilde u\right\|_{C^0(B(0,1))}\right).
\end{align*}

Consequently by Sobolev embedding, we know 
\begin{align*}
    \left\|\tilde u\right\|_{C^{1,1-\frac{2n}{q}}\left(B(0,\frac{1}{2})\right)}\leqslant C_q\left\|\tilde u\right\|_{W^{2,q}\left(B(0,\frac{1}{2})\right)}\leqslant C_q\left(\left\|\tilde v\right\|_{C^0(B(0,1))}+\left\|\tilde u\right\|_{C^0(B(0,1))}\right).
\end{align*}

Taking derivative of (\ref{poisson equation euclidean}) under the Euclidean coordinate, we have:
\begin{align*}
    \left\|\tilde u\right\|_{W^{3,q}\left(B(0,\frac{1}{2})\right)}&\leqslant C_q\cdot \left(\left\|\tilde v\right\|_{W^{1,q}\left(B(0,\frac{2}{3})\right)}+\left\|\tilde u\right\|_{W^{2,q}\left(B(0,\frac{2}{3})\right)}\right)\\
    &\leqslant C_q\cdot \left(\left\|\tilde v\right\|_{C^1(B(0,1))}+\left\|\tilde u\right\|_{C^0(B(0,1))}\right).
\end{align*}
And by bootstrapping and Sobolev embedding
\begin{align*}
    \left\|\tilde u\right\|_{C^{k+1,1-\frac{2n}{q}}\left(B(0,\frac{1}{2})\right)}&\leqslant C_{q,k}\left\|\tilde u\right\|_{W^{k+2,q}\left(B(0,\frac{1}{2})\right)}\\
    &\leqslant C_{q,k}\left(\left\|\tilde v\right\|_{W^{k,q}\left(B(0,\frac{2}{3})\right)}+\left\|\tilde u\right\|_{W^{k+1,q}\left(B(0,\frac{2}{3})\right)}\right)\\
    &\leqslant C_{q,k}\left(\left\|\tilde v\right\|_{C^k\left(B(0,1)\right)}+\left\|\tilde u\right\|_{C^0\left(B(0,1)\right)}\right).
\end{align*}
Passing to the original metric, we know that for any $k\geqslant 1$
\begin{align*}
r(x)^{k\lambda}\left\|\nabla^k u\right\|_{C^0(B_g(x,r(x)^\lambda))}\leqslant C_{k} \left(\left\| u\right\|_{C^0(B_g(x,r(x)^\lambda))}+\sum_{i=0}^{k-1}r(x)^{(i+2)\lambda}\left\|\nabla^i v\right\|_{C^0(B_g(x,r(x)^\lambda))}\right).
\end{align*}
\end{proof}


\section{Solving Poisson Equation on the Model Space}\label{Poisson equation}
In this section, following the approach of Sun-Zhang in \cite{SZ}, we will use separation of variables to solve $\Delta_{\omega_\mathcal{C}}u =v$ for $u$, $v$ functions on $\mathcal{C}$ and give some uniform estimate of our solution. We use the same notation introduced in section \ref{modelspace}. 
 	
We first notice that $\mathcal{C}$ is diffeomorphic to $Y\times \RR$ where the level set $Y =\{\xi \in \mathcal{C}\,|\,-\log |\xi|^2_{h_D} =z_0^n\}$ for a fixed $z_0>0$ and $Y$ is equipped with an $S^1$ bundle structure over $D$ and a metric $h_Y$ induced by $\omega_\mathcal{C}$. Let $0=\Lambda_0 <\Lambda_1<\cdots$ be the spectrum of the Laplacian $-\Delta_{(Y,h_Y)}$ on $Y$ with respect to the metric $h_Y$. Let $\{\psi_k\}_{k=0}^\infty$ be the corresponding eigenfunctions with $\left\|\psi_k\right\|_{L^2(Y)}=1$. They showed that $\Lambda_k = z_0^{-1}\lambda_k+nz_0^{n-1}j_k^2$ for some $\lambda_k\geqslant 0$ and $j_k\in \mathbb{N}$. Moreover, $\{\psi_k\}_{k=0}^\infty$ form an orthonormal basis of $L^2(Y)$ and each $\psi_k$ is homogeneous of degree $j_k$ under the $S^1$ action.
The product structure allows us to do Fourier expansion on $\mathcal{C}$. In particular, for any smooth function $v$ on $\mathcal{C}$, if we take $P_k(v)(z) = \int_Y v(z,y) \psi_k(y)$, we can write
\begin{align}\label{decomposition} v(z,y) = \sum_{k=0}^\infty P_k(v)(z) \cdot \psi_k(y),\end{align} 
which is convergent in $L^2$ sense. In fact, we will prove later that the convergence is in $C^k$ if $v$ has proper higher regularity estimate.
For the separated function $u(z)\psi(y)$ we have
\begin{align*}\Delta_{\omega_\Ca}u(z)\psi(y) = \frac{1}{nz^{n-1}}(u''(z)-(\lambda+\frac{j^2 n}{4} \cdot z^n) nz^{n-2} u(z))\psi(y).\end{align*}
	Moreover, \cite{SZ} proved the following:
\begin{proposition}
	Let $\left(\mathcal{C}, g_{\mathcal{C}}\right)$ be the Calabi model space, and let $u$ solve the Poisson equation $\Delta_{\mathcal{C}} u=v$ for some $v \in C^{K_0}(\mathcal{C})$ and $K_0 \in \mathbb{N}$ sufficiently large. Let $u$, $v$ have "fiber-wise" expansions as in (\ref{decomposition}). Then for every $k \in \mathbb{N}$, the coefficient functions $u_k(z)$ and $v_k(z)$ satisfy
	\begin{align}\label{ODE}
	u_k''(z)-\left(n\lambda_k+\frac{j_k^2 n^2}{4} \cdot z^n\right) z^{n-2} u_k(z)=n z^{n-1}\cdot v_k(z), \quad z \geqslant 1.
	\end{align}
	\end{proposition}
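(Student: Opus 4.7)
The plan is to combine the explicit formula for $\Delta_{\omega_\Ca}$ acting on a separated function $u(z)\psi(y)$, displayed immediately before the proposition, with the fiber-wise Fourier decomposition of $u$ and $v$, and then read off the ODE satisfied by each mode.

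Concretely, I would first record that the quoted formula, specialized to $\psi=\psi_k$, becomes
\begin{equation*}
\Delta_{\omega_\Ca}\bigl(u(z)\psi_k(y)\bigr) \;=\; \frac{1}{nz^{n-1}}\Bigl(u''(z) \;-\; \bigl(n\lambda_k + \tfrac{j_k^2 n^2}{4} z^n\bigr) z^{n-2} u(z)\Bigr)\psi_k(y),
\end{equation*}
since $\psi_k$ is an eigenfunction of $-\Delta_{(Y,h_Y)}$ with the stated decomposition $\Lambda_k = z_0^{-1}\lambda_k+nz_0^{n-1}j_k^2$ and is homogeneous of degree $j_k$ under the fiber-wise $S^1$-action, so the contributions from the base and from the circle direction cleanly split into the $\lambda_k$ and $j_k$ terms. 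Applying the fiber projection $P_k(\,\cdot\,)(z)=\int_Y (\cdot)(z,y)\,\psi_k(y)$ to both sides of $\Delta_{\omega_\Ca} u = v$, the right-hand side produces $v_k(z)$ by definition, while the left-hand side should produce $\tfrac{1}{nz^{n-1}}\bigl(u_k''(z) - (n\lambda_k + \tfrac{j_k^2 n^2}{4} z^n) z^{n-2} u_k(z)\bigr)$. Multiplying by $nz^{n-1}$ then yields (\ref{ODE}).

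The only substantive point is justifying the commutation $P_k\circ \Delta_{\omega_\Ca} = L_k \circ P_k$, where $L_k$ denotes the scalar ODE operator on the right; equivalently, that one may interchange $\Delta_{\omega_\Ca}$ with the fiber integral (or with the infinite Fourier sum). This is the purpose of the hypothesis $v\in C^{K_0}(\Ca)$ for $K_0$ sufficiently large: by local elliptic regularity (cf.\ Proposition \ref{uniform schauder estimates}) the solution $u$ inherits enough smoothness, and Weyl's law on the compact manifold $Y$ ensures rapid decay of Fourier coefficients of smooth enough functions, so that $\sum_m u_m(z)\psi_m(y)$ together with all its derivatives up to order two converges absolutely and uniformly on compact subsets of $\{z\geqslant 1\}$; thus $\Delta_{\omega_\Ca}$ may be applied termwise. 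A conceptually cleaner alternative, which avoids termwise differentiation entirely, is to work directly with $P_k$: self-adjointness of $\Delta_{(Y,h_Y)}$ transfers the fiber derivatives onto $\psi_k$ where they are replaced by the eigenvalue $\Lambda_k$, while $\partial_z$-derivatives pass through the fiber integral since $\psi_k$ does not depend on $z$. This interchange is the only real obstacle; once it is in place, the ODE (\ref{ODE}) is an immediate bookkeeping consequence of comparing coefficients of $\psi_k$.
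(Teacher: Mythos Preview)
Your argument is correct and is the natural one. Note, however, that the paper does not supply its own proof of this proposition: it is quoted from Sun--Zhang \cite{SZ}, and the only ingredient the paper records is the displayed formula for $\Delta_{\omega_\Ca}\bigl(u(z)\psi(y)\bigr)$ immediately preceding the statement. Your write-up is precisely the standard derivation that formula is meant to suggest---project $\Delta_{\omega_\Ca}u=v$ onto the $k$-th mode and use either termwise differentiation (justified by $C^{K_0}$ regularity and Weyl's law) or, more cleanly, self-adjointness of $\Delta_{(Y,h_Y)}$ to move fiber derivatives onto $\psi_k$---so there is nothing to compare against and nothing to correct.
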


Specifically, they found a solution of $\Delta_{\omega_\Ca} u = v$ by solving ODE (\ref{ODE}). With the estimate of the solution of this equation, they showed that the $L^2$ formal solution given by $\sum u_k\psi_k$ is actually a regular solution to the Poisson equation. Similarly but directly via careful estimates, we can construct the solution of Poisson equation with respect to $\omega_\mathcal{C}$ with some weighted regularity and finer polynomial growth order:

\begin{proposition}\label{solve poisson equation}
Assume that $v$ is a function on $\Ca$ such that for any $k\in \mathbb{N}$ there exist constants $C_k$ and $\delta$ such that $|z^{\frac{k}{2}}\nabla^k v|_{\omega_{\Ca}}\leqslant C_k z^{\delta}$ on $z>C_k$. Then there exist constants $C'_k$ and a function $u:\mathcal{C}\to \RR$ such that $\Delta_{\omega_\mathcal{C}}u = v$ and 
\begin{align*}
    |u|\leqslant C_0' z^{\delta+n+1+\epsilon},\quad \left|\nabla u\right|_{\omega_\Ca} \leqslant C_1' z^{\delta+\frac{n+1}{2}+\epsilon},\quad\left|\nabla^k u\right|_{\omega_\Ca}\leqslant C'_k z^{\delta-\frac{k-2}{2}+\epsilon} 
\end{align*}
on $z>C_k'$, for any $\epsilon>0$ and any integer $k\geqslant 2$.
\end{proposition}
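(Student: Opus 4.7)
The approach is to combine the fiber-wise Fourier decomposition laid out just before the proposition with the ODE estimates from Appendix \ref{estimate of ODE}. I expand $v(z,y)=\sum_{k=0}^\infty v_k(z)\psi_k(y)$ as in (\ref{decomposition}) and then invert (\ref{ODE}) mode by mode. Integration by parts $N$ times against the eigenfunction $\psi_k$, combined with the hypothesis on the covariant derivatives of $v$, yields $|v_k(z)|\leq C_N\Lambda_k^{-N}z^\delta$, and analogous bounds for the $z$-derivatives of $v_k$ with appropriate powers of $z$. With a Weyl-type pointwise bound on $\psi_k$ this makes the Fourier series converge absolutely in every $C^j$ norm once $N$ is taken sufficiently large, at the cost of the harmless factor $z^\epsilon$ in the summed bounds below.

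For each mode I then produce the correct particular solution of (\ref{ODE}). The zero mode $(\lambda_0=j_0=0)$ reduces to $u_0''(z)=nz^{n-1}v_0(z)$; two integrations from a fixed base point give $|u_0(z)|\leq Cz^{\delta+n+1}$ and $|u_0^{(j)}(z)|\leq Cz^{\delta+n+1-j}$, and these govern the leading growth of $u$ and its derivatives. When $j_k\geq 1$ the coefficient in front of $u_k$ in (\ref{ODE}) is dominated by $\tfrac{j_k^2 n^2}{4}z^{2n-2}$, so by the appendix the decaying branch at infinity satisfies $|u_k(z)|\lesssim j_k^{-2}z^{1-n}\sup|v_k|$, which is much smaller than the zero mode; similarly, for $j_k=0$ but $\lambda_k>0$ the bounded branch decays polynomially faster than $v_k$. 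The sum $u=\sum u_k\psi_k$ is then dominated by the zero mode and gives $|u|\leq Cz^{\delta+n+1+\epsilon}$.

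To pass from ODE-derivative bounds on $u_0$ to manifold-derivative bounds on $u$ I use that $|dz|_{\omega_\Ca}\sim z^{-(n-1)/2}$ on $\Ca$, so that $|\nabla^j u_0|_{\omega_\Ca}\sim |u_0^{(j)}(z)|\cdot z^{-j(n-1)/2}$ to leading order plus terms coming from $\nabla dz$ which only improve the rate. This gives $|\nabla u|_{\omega_\Ca}\leq Cz^{\delta+(n+1)/2+\epsilon}$ (from $|u_0'|\leq Cz^{\delta+n}$) and $|\nabla^2 u|_{\omega_\Ca}\leq Cz^{\delta+\epsilon}$ (from $|u_0''|\leq Cz^{\delta+n-1}$, consistent with $|\Delta u|=|v|\leq Cz^\delta$). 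For higher derivatives $k\geq 3$ I invoke Proposition \ref{uniform schauder estimates} with $\lambda=\tfrac1{n+1}$ applied to $\Delta_{\omega_\Ca}u=v$ and its differentiations, noting that each $\omega_\Ca$-covariant derivative then costs a factor $r^{-\lambda}=z^{-1/2}$, to arrive at $|\nabla^k u|_{\omega_\Ca}\leq Cz^{\delta-(k-2)/2+\epsilon}$.

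The main obstacle I anticipate is the uniform control of the particular solution in each mode and the attendant bookkeeping. The homogeneous solutions of (\ref{ODE}) are affine in $z$ for the zero mode and exponentially large at infinity for $j_k\geq 1$ or $\lambda_k>0$, so in every case the decaying branch must be carefully isolated, and the $\Lambda_k$-dependence of the implicit constants (including contributions from transverse derivatives of $\psi_k$, which scale like $\Lambda_k^{1/2}\psi_k$) must be tracked so that the summed series reproduces exactly the advertised powers of $z$ with only the benign $z^\epsilon$ loss from summation, rather than any genuine logarithmic or polynomial correction.
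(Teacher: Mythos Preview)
Your overall strategy—Fourier-decompose along the fiber, solve (\ref{ODE}) mode by mode using the appendix, and then upgrade the ODE bounds to covariant derivative bounds—matches the paper's proof. However, there is a genuine gap in your higher-derivative step, and a minor misattribution of the $z^\epsilon$ loss.

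\textbf{The gap.} For $k\geq 2$ you claim the bound $|\nabla^k u|_{\omega_\Ca}\leq C z^{\delta-(k-2)/2+\epsilon}$ by invoking Proposition~\ref{uniform schauder estimates} on $\Delta_{\omega_\Ca}u=v$. But that proposition bounds $z^{k/2}\|\nabla^k u\|$ by $\|u\|_{C^0}$ plus source terms, and your own $C^0$ bound is $\|u\|\leq Cz^{\delta+n+1+\epsilon}$. Plugging this in gives only $|\nabla^k u|\leq Cz^{\delta+n+1-k/2+\epsilon}$, which is too large by a factor of $z^n$. Your computation ``$|\nabla^2 u|\leq Cz^{\delta+\epsilon}$ from $|u_0''|$'' only treats the zero mode and does not control $\nabla^2(u-u_0)$; without that you cannot bootstrap. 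The paper's remedy, which you should adopt, is to split $u=u_0+(u-u_0)$. For $u_0$ one computes $\nabla^k u_0$ explicitly by the chain rule, using the bounds $|\nabla z|\leq z^{-(n-1)/2}$ and $|\nabla^i z|\leq z^{1-n-i/2}$ for $i\geq 2$, and obtains the sharp rates directly. For $u-u_0$ the point is that its $C^0$ bound is only $z^{\delta+1}$ (the non-zero modes are tame, as you noted), so Proposition~\ref{uniform schauder estimates} applied to $\Delta_{\omega_\Ca}(u-u_0)=v-P_0(v)$ now yields $z^{k/2}|\nabla^k(u-u_0)|\leq Cz^{\delta+1}$, which is exactly what is needed.

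\textbf{The $\epsilon$.} The extra $z^\epsilon$ is \emph{not} the price of summing the series—Weyl's law makes that sum converge without loss. It arises solely from the two antiderivatives defining $u_0$: when $\delta+n$ or $\delta+n+1$ happens to vanish, the integral $\int^z s^{n-1+\delta}\,ds$ produces a logarithm, which is absorbed into $z^\epsilon$. So your claim $|u_0^{(j)}(z)|\leq Cz^{\delta+n+1-j}$ without $\epsilon$ is not quite right in general, and you should locate the $\epsilon$ there rather than in the summation.
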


\begin{proof}
Let \begin{align*}
    u(z,y) = u_0(z)\psi_0 + \sum_{j =1}^\infty u_j(z)\psi_j(y)
\end{align*} be the formal solution, where $u_0$ is constructed as follows and $u_j$ is constructed in the Appendix \ref{estimate of ODE} which mainly follows from Sun-Zhang \cite{SZ}.

\textbf{Step 1:} We first show that the formal solution converges in $C^0$ sense with the polynomial order depending on the polynomial order of $v$.

For $u_0$, we have
	\begin{align*}
		u_0''(z) = nz^{n-1}P_0v(z),\quad  u_0'(z) = \int_{C_2}^z ns^{n-1}P_0v(s)ds,\quad u_0(z) = \int_{C_1}^z\left(\int_{C_2}^t ns^{n-1}P_0v(s)ds\right)dt.
	\end{align*}
We choose $C_1$ and $C_2$ here to be $1$ or $+\infty$ depending on the order of $P_0v$ to make $u_0(z)$ and $u_0'(z)$ finite with proper order. Also, the integration here is the only place that we will lose the rate $z^\epsilon$.

For $u_j$, we first prove that the projection $P_j(v)$ is well-defined and has the following estimate:
	\begin{align}\label{estimate of v_k}
	|v_j(z)| = \left|P_j(v)(z)\right|&=\left\vert\int_{Y}\frac{\Delta^{K_0}_{h_Y}v(z,y)\psi_j}{(\Lambda_j)^{K_0}} \mathrm{dVol_Y}\right\vert\leqslant \frac{\left\|v(z,\cdot)\right\|_{C^{2K_0}(Y,h_Y)}}{(\Lambda_j)^{K_0}}\leqslant C_{K_0} \frac{z^\delta}{\Lambda_j^{K_0}}
	\end{align} for any $K_0$ and $z>C$. Consequently, the solution $u_j$ constructed in (\ref{estimate of u wrt v 0}) and (\ref{estimate of u wrt v 1}) for the equation (\ref{ODE}) has the following $C^0$ bound for $z>C_{n,\delta,\lambda_1}$:
\begin{align}\label{estimate of u_k}  
	\left|u_j(z)\right|\leqslant \frac{1}{(\Lambda_j)^{K_0}}\frac{C_{n,\delta,\lambda_1,K_0}z^{\delta+1}}{n\lambda_1}.
		\end{align}
Here the constant $C$ is uniform for $j$ and $z$ and only depends on $K_0$ and $n$.We also have the uniform estimate of $\psi_j$ as the eigenfunctions of $-\Delta_{\omega_Y}$ on $Y$ by its eigenvalues showed in Sun-Zhang \cite[Lemma 5.1.]{SZ}:
	\begin{align}\label{estimate of psi_k}
	\left\|\psi_j\right\|_{C^k\left(Y\right)} &\leqslant C_{k,Y} \cdot\left(\Lambda_j\right)^{\frac{n+k}{2}}.
    \end{align} 
Combine (\ref{estimate of u_k}) and (\ref{estimate of psi_k}), we get
\begin{align*}
	|\sum_{j =1}^\infty u_j\psi_j| &\leqslant \sum_{j=1}^{\infty}\left|u_j(z)\right| \cdot\left|\psi_j(y)\right| \leqslant C_{n,\delta,\lambda_1,K_0,Y} \sum_{j=1}^{\infty} \frac{z^{\delta+1}}{\left(\Lambda_j\right)^{K_0-\frac{n}{2}}}.
\end{align*}
		
Weyl's law gives the bound of $\Lambda_j$ with $C_Y^{-1} j^{\frac{2}{2n-1}} \leq\left|\Lambda_j\right| \leqslant C_Y j^{\frac{2}{2n-1}}$ for some $C_Y$ only depend on $(Y,h_Y)$. Take $K_0 =2n $ we can conclude that the summation converges.\\

\textbf{Step 2:} We prove that $u$ is smooth. We mostly follow the proof in Sun-Zhang \cite[ Proposition 6.2.]{SZ}. Let 
\begin{align*}
U_N = \sum_{j=0}^N u_j \psi_j,\quad V_N = \sum_{j=0}^N v_j \psi_j. 
\end{align*}
Then we have $\Delta_{\omega_{\mathcal{C}}} U_N = V_N$.\\
We first show that $V_N$ has $C^k$ bound independent of $N$. In fact, we have the higher regularity estimate for $v_j$ as in (\ref{estimate of v_k}):
\begin{align*}
    | \nabla^k v_j(z)| &= \left|\nabla^k P_j(v)(z)\right|=\left\vert\nabla^k\int_{Y}\frac{\Delta^{K_0}_{h_Y} v(z,y)\psi_j}{(\Lambda_j)^{K_0}} \mathrm{dVol_Y}\right\vert\leqslant C_{z,Y,K_0,k}\frac{1}{\Lambda_j^{K_0}}.
\end{align*}
Then given by the estimate of $\psi_j$ (\ref{estimate of psi_k}), for any integer $j$, we know that
\begin{align*}
    \sum_{j=1}^N\left|\nabla^k (v_j\psi_j) \right|\leqslant  C_{z,Y,K_0,k} \sum_{j=1}^N\Lambda_j^{\frac{n+k}{2}-K_0}.
\end{align*}
So again by Weyl's law $V_N$ converges to $v$ as a $C^k$ function. Now we can prove that $U_N$ also have the uniform $C^k$ bound with respect to $N$ via local elliptic estimates.

For any fixed point $x\in \mathcal{C}$, we consider the ball $B_{\omega_\mathcal{C}}(x,1)$. Then for any $p>0$, there exists a constant $C_{p,x}$ such that
\begin{align*}
    \left\|U_N\right\|_{W^{2,p}(B_{\omega_\mathcal{C}}(x,\frac{1}{2}))}\leqslant C_{p,x}\cdot (\left\|V_N\right\|_{C^{0}(B_{\omega_\mathcal{C}}(x,1))}+\left\|U_N\right\|_{C^0(B_{\omega_\mathcal{C}}(x,1))})\leqslant C_{p,x}.
\end{align*}
By bootstrapping and Sobolev embedding, for any $k\geqslant 0$ and $p>0$, there exists a constant $C_{p,x}$ such that
\begin{align*}
    \left\|U_N\right\|_{C^{k+1,1-\frac{2n}{p}}(B_{\omega_\mathcal{C}}(x,\frac{1}{2}))}&\leqslant C_{p,x}\left\|U_N\right\|_{W^{k+2,p}(B_{\omega_\mathcal{C}}(x,\frac{1}{2}))}\\
    &\leqslant C_{p,x} \left(\left\|V_N\right\|_{W^{k,p}(B_{\omega_\mathcal{C}}(x,1))}+\left\|U_N\right\|_{W^{k,p}(B_{\omega_\mathcal{C}}(x,1))}\right)\leqslant C_{p,k,x}.
\end{align*}
Consequently we have $U_N$ converges to $u$ in $C^{k}$. Since $x$ is arbitrary, we know that $u$ is smooth on $\mathcal{C}$.\\

\textbf{Step 3.} We can now give global bound on the $C^1$, $C^2$ and higher regularity of $u$. We treat $u_0$ and $u- u_0$ seperately.

For $u_0$, we have explicit estimate by computation of the Christoffel symbol under the following holomorphic coordinate: Let $\pi:\mathcal{C}\rightarrow D$ be the projection map. For any point $\xi \in \Ca$ we take the local holomorphic coordinate $\underline{z} =(z_1, z_2,\ldots ,z_{n-1})$ on $B\subset D$ around the point $\pi(\xi)$. Take $\xi_0$ be a local holomorphic section of $N_D$ such that $\left|\xi_0\right|_{h_D} = e^{-\varphi}$, where $i\pp\bp \varphi = \omega_D$ with $\varphi(0)=0$, $\nabla\varphi(0) = 0$. Then $\xi = \xi_0\cdot w$ where $w$ is the fiber coordinate. Recall that $z=\left(-\log |\xi|_{h_D}^2\right)^{\frac{1}{n}}$, $w = e^{i\theta-t+\varphi/2}$ and 
$$\omega_{\mathcal{C}}=z \pi^*\omega_D+\frac{1}{nz^{n-1}}\cdot i\cdot\left(\frac{d w}{w}-\partial \varphi\right) \wedge\left(\frac{d \bar{w}}{\bar{w}}-\bar{\partial} \varphi\right).$$
Under this coordinate, we can prove by induction that 
\begin{align}
    &\left|\nabla^k dz_i\right|_{\omega_\Ca}\leqslant C z^{-\frac{k+1}{2}}, \text{ for any } k\geqslant  0, \nonumber\\
    &\left|\frac{dw}{w}\right|_{\omega_\Ca}\leqslant z^{\frac{n-1}{2}},\quad\left|\nabla^k \frac{dw}{w}\right|_{\omega_\Ca}\leqslant C z^{-\frac{k+1}{2}},\text{ for any } k\geqslant 1,\nonumber\\ 
    &\left|dt\right|_{\omega_\Ca}\leqslant z^{\frac{n-1}{2}}, \quad\left|\nabla^k t\right|_{\omega_\Ca}\leqslant z^{-\frac{k}{2}},\quad\left|\nabla^k t^a\right|_{\omega_\Ca}\leqslant z^{n(a-1)-\frac{k}{2}}, \text{ for any } k\geqslant 2. \label{estimate of t}
\end{align}
Now we can estimate the higher derivative of $u_0$:
\begin{align*}
    \nabla^ku_0 &= \sum_{j = 1}^k u_0^{(j)}(z)\sum_{\substack{i_1+\cdots+i_j = k\\i_1>0,\cdots,i_j>0}}C_{i_1,\cdots,i_j} \nabla^{i_1}z\otimes\nabla^{i_2}z\otimes\cdots\otimes\nabla^{i_j}z,\\ 
    &=u_0'(z)\nabla^kz+\sum_{j = 2}^k u_0^{(j)}(z)\sum_{\substack{i_1+\cdots+i_j = k\\i_1>0,\cdots,i_j>0}} C_{i_1,\cdots,i_j}\nabla^{i_1}z\otimes\nabla^{i_2}z\otimes\cdots\otimes\nabla^{i_j}z,\\
    &=u_0'(z)\nabla^kz+\sum_{j = 2}^k \left(nz^{n-1}P_0v(z)\right)^{(j-2)}\sum_{\substack{i_1+\cdots+i_j = k\\i_1>0,\cdots,i_j>0}} C_{i_1,\cdots,i_j}\nabla^{i_1}z\otimes\nabla^{i_2}z\otimes\cdots\otimes\nabla^{i_j}z,
\end{align*}
where $f^{(k)}$ refers to the higher derivative of the function $f$. Since
\begin{align*}
    \left(nz^{n-1}P_0v(z)\right)^{(j-2)} = \sum_{l=0}^{j-2}C_{l,j,n}z^{n-j+l+1}P_0v^{(l)} \text{ and }
    \left|P_0v^{(l)}(z)\right| \leqslant C_Y\left|\nabla^lv\right|_{\omega_\Ca}\cdot\left|dz\right|^{-l}_{\omega_\Ca}\leqslant C_Yz^{\delta+\frac{(n-2)l}{2}},
\end{align*}
we know that 
\begin{align*}
    \left|\nabla^ku_0\right|_{\omega_\Ca} &\leqslant C_{n,k,Y}z^{\delta+1-\frac{k}{2}+\epsilon}+ C_{n,k,Y}\sum_{j = 2}^k \sum_{l=0}^{j-2}z^{n-j+1+\delta+\frac{nl}{2}}\sum_{\substack{i_1+\cdots+i_j = k\\i_1>0,\cdots,i_j>0}}\left|\nabla^{i_1}z\right|_{\omega_\Ca}\otimes\left|\nabla^{i_2}z\right|_{\omega_\Ca}\otimes\cdots\otimes\left|\nabla^{i_j}z\right|_{\omega_\Ca}\\
    &\leqslant C_{n,k,Y}z^{\delta+1-\frac{k}{2}+\epsilon}+ C_{n,k,Y}z^{1+\delta}\sum_{j = 2}^k \sum_{\substack{i_1+\cdots+i_j = k\\i_1>0,\cdots,i_j>0}}\left|z^{\frac{n-2}{2}}\nabla^{i_1}z\right|_{\omega_\Ca}\otimes\left|z^{\frac{n-2}{2}}\nabla^{i_2}z\right|_{\omega_\Ca}\otimes\cdots\otimes\left|z^{\frac{n-2}{2}}\nabla^{i_j}z\right|_{\omega_\Ca}.
\end{align*}
By $\left|\nabla z\right|_{\omega_\Ca}\leqslant z^{-\frac{n-1}{2}}$ and $\left|\nabla^i z\right|_{\omega_\Ca}\leqslant z^{-\frac{i}{2}-n+1}$ for $i\geqslant 2$, we have 
\begin{align*}
    \sum_{\substack{i_1+\cdots+i_j = k\\i_1>0,\cdots,i_j>0}}\left|z^{\frac{n-2}{2}}\nabla^{i_1}z\right|\otimes\left|z^{\frac{n-2}{2}}\nabla^{i_2}z\right|\otimes\cdots\otimes\left|z^{\frac{n-2}{2}}\nabla^{i_j}z\right|\leqslant C_kz^{-\frac{k}{2}},
\end{align*}
Consequently, we have 
\begin{align*}
    \left|\nabla u_0\right|_{\omega_\Ca} \leqslant C_{n,Y}z^{\delta+\frac{n+1}{2}+\epsilon}.
\end{align*}
and for $k\geqslant 2$ 
\begin{align*}
    \left|\nabla^ku_0\right|_{\omega_\Ca} \leqslant C_{n,k,Y}z^{\delta+1-\frac{k}{2}+\epsilon}.
\end{align*} 

For $u-u_0$ we use our $C_0$ bound of $u-u_0$ and do elliptic estimate around a point $x \in \{z= z_0\}$ by Proposition \ref{uniform schauder estimates}. We have the uniform estimate of $u-u_0$ on $\{z\geqslant C'\}$:
\begin{align*}
z^{\frac{k}{2}}\left\|\nabla^k (u-u_0)\right\|_{C^0(B_g(x,\sqrt{z}))}\leqslant C_{k} \left(\left\| u-u_0\right\|_{C^0(B_g(x,\sqrt{z}))}+\sum_{i=0}^{k-1}z^{\frac{i+2}{2}}\left\|\nabla^i v-P_0(v)\right\|_{C^0(B_g(x,\sqrt{z}))}\right),
\end{align*}
which yields\begin{align*}
    \left|z^{\frac{k}{2}}\nabla^k (u-u_0)\right|_{\omega_\Ca}\leqslant C_k' z^{\delta+1}.
\end{align*}
Together with the estimate of $u_0$, we have
\begin{align*}
    |u|\leqslant C_0' z^{\delta+n+1+\epsilon},\quad \left|\nabla u\right|_{\omega_\Ca} \leqslant C_1' z^{\delta+\frac{n+1}{2}+\epsilon},\quad\left|\nabla^k u\right|_{\omega_\Ca}\leqslant C_k' z^{\delta-\frac{k-2}{2}+\epsilon} \text{ for } k\geqslant 2.
\end{align*}
\end{proof}

\begin{remark}
	We see from the proof that the main term in $C^0$ and $C^1$ estimate of the solution $u$ is the fiber direction $u_0$. However, for the higher estimate $C^k$ where $k\geqslant 3$, they will give the same order contribution.
\end{remark}

\section{Improve the Decay of the Ricci potential}\label{decay faster than quadratic}

Now we look back at the quasi-projective manifold $X$ and the class $\mathfrak{k}\in H^2(X)$. In this section we are going to find a good representative form $\beta$ inside the class $\mathfrak{k}\in H^2(X)$. We look at its behavior on the model space and then find some function $U$ by finite step iteration such that $$\frac{(\Phi^*\beta+i\pp\bp U)^n}{\omega_\Ca^n}-1$$ has faster decay rate.

\subsection{A good representative}
\begin{lemma}\label{a good representative}
	For any $\mathfrak{k} \in H^2(X)$, there exists a closed $(1,1)$-form $\beta$ on $M$ such that $[\beta|_{X}] = \mathfrak{k}$.
\end{lemma}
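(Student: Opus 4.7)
The plan is to prove the statement in two parts — first show the restriction map $j^*\colon H^2(M,\RR) \to H^2(X,\RR)$ is surjective (a purely topological statement), and then refine the lift to a closed $(1,1)$-form using Hodge theory on the compact K\"ahler manifold $M$.

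For the surjectivity I would use the Gysin long exact sequence of the pair $(M,D)$:
\[
H^0(D) \xrightarrow{i_*} H^2(M) \xrightarrow{j^*} H^2(X) \xrightarrow{\partial} H^1(D) \xrightarrow{i_*} H^3(M),
\]
so it suffices to prove injectivity of the Gysin map $i_*\colon H^1(D) \to H^3(M)$. This is where the two Lefschetz theorems enter crucially. Since $\dim_\CC M = n \geq 3$ and $D$ is smooth with ample normal bundle (equivalently $D$ is an ample divisor), the Lefschetz hyperplane theorem gives $i^*\colon H^1(M) \xrightarrow{\sim} H^1(D)$. By the projection formula, $i_* \circ i^*\colon H^1(M) \to H^3(M)$ is cup product with the ample class $[D]$, and this is injective by the hard Lefschetz theorem applied to $[D]$: the isomorphism $\cup [D]^{n-1}\colon H^1(M) \xrightarrow{\sim} H^{2n-1}(M)$ factors through $H^3(M)$, forcing the first cup product $\cup[D]\colon H^1(M) \to H^3(M)$ to be injective. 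Combining, $i_*\colon H^1(D) \to H^3(M)$ is injective, hence $j^*$ is surjective.

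For the $(1,1)$-refinement, I would pick any real closed 2-form $\alpha$ on $M$ with $[\alpha|_X] = \mathfrak{k}$, and then apply the Hodge decomposition on the compact K\"ahler manifold $M$ to write
\[
[\alpha] = [\alpha]^{1,1} + [\alpha]^{2,0} + [\alpha]^{0,2}
\]
with $[\alpha]^{1,1}$ real. Take $\beta$ to be the real harmonic $(1,1)$-form representing $[\alpha]^{1,1}$. Since our Step 1 identifies $H^2(X)$ as a quotient of $H^2(M)$ by the $(1,1)$-class $[D]$, $H^2(X)$ inherits a pure Hodge structure of weight $2$; the classes $\mathfrak{k}$ relevant to the paper (in particular those in $H^2_+(X)$, which are meant to be represented by K\"ahler forms) are of type $(1,1)$ in this structure, so the $(2,0)+(0,2)$-part of $[\alpha]$ restricts to zero in $H^2(X,\CC)$, and $[\beta|_X] = [\alpha|_X] = \mathfrak{k}$.

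The main obstacle is Step 1 — the Gysin injectivity $i_*\colon H^1(D) \to H^3(M)$ — which is exactly where the hypotheses $n \geq 3$ and ampleness of $N_D$ become essential (they power the Lefschetz hyperplane theorem for $i^*$ and hard Lefschetz for $\cup[D]$, respectively). Step 2 is then standard Hodge theory on the compact K\"ahler manifold $M$.
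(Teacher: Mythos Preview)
Your argument has a gap at its foundation: you assert that ``$D$ is smooth with ample normal bundle (equivalently $D$ is an ample divisor)'', but this equivalence is false. In the paper's setting $D\in|-K_M|$, so ampleness of $D$ means ampleness of $-K_M$, i.e.\ $M$ Fano; the hypotheses only give that $N_D=-K_M|_D$ is ample on $D$, and the paper establishes (quoting \cite{hsvz2}) that $M$ is merely \emph{weak} Fano: $-K_M$ is nef and big with a possibly nonempty non-ample locus $E\subset X$ (which appears explicitly in Section~\ref{integral condition}). Consequently neither the classical Lefschetz hyperplane theorem nor hard Lefschetz for the class $[D]$ is available as you invoke them, since both require $[D]$ to be ample (respectively K\"ahler).

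Your Step 2 is also incomplete: the lemma is stated for \emph{every} $\mathfrak{k}\in H^2(X)$, not only for $\mathfrak{k}\in H^2_+(X)$, and the remark that such classes are ``meant to be represented by K\"ahler forms'' is not a proof that the $(2,0)+(0,2)$ part of a lift vanishes. The paper's proof handles both steps with a single application of Kawamata--Viehweg vanishing to the nef and big bundle $-K_M$: this gives $H^{0,q}(M)=0$ for all $q\geqslant1$, so in particular $H^{2,0}(M)=H^{0,2}(M)=0$, making \emph{every} class in $H^2(M,\RR)$ automatically of type $(1,1)$; and together with the ideal-sheaf sequence of $D$ and Serre duality (here $n\geqslant3$ is used) it forces $H^{0,1}(D)=0$, hence $H^1(D)=0$. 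The Gysin sequence then gives surjectivity of $j^*$ outright, with no need for a Lefschetz-type injectivity argument on $i_*$.
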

\begin{proof}
By discussion in Section 2 of \cite{hsvz2} we know $M$ is weak Fano, and consequently simply connected by \cite{TakayamaS2000Scow}. Consider the exact sequence $0\to \mathcal{O}_M(-D)\to \mathcal{O}_M\to \mathcal{O}_D\to 0$, we have long exact sequence 
$$\cdots\to H^1(M,\mathcal{O}_M)\to H^1(D,\mathcal{O}_D)\to H^2(M,\mathcal{O}_M(-D))\to\cdots$$
On the other hand, by Serre duality we have$$H^2(M,\mathcal{O}_M(-D))\simeq H^2(M,K_M)\simeq H^{n,n-2}(M,-K_M).$$ Apply Kawamata–Viehweg vanishing theorem to the nef and big line bundle $-K_M$ so we know that $H^{n,p}(M,-K_M)=H^{0,p}(M)= 0$ for $p\geqslant 1$. So when $n\geqslant 3$, we have $H^1(D,\mathbb{C}) =0$. The long exact sequence given by the excision theorem and Thom-Gysin sequence
$$H^0(D)\to H^2(M)\to H^2(X)\to H^1(D)\to \cdots $$
 yields that the restriction map $j^*:H^2(M)\to H^2(X)$ induced by $j:X\to M$ is surjective with $
\mathrm{dim}\,\mathrm{Ker} j^* = 1$, generated by $c_1(-K_M)$. Hence there exists a closed $(1,1)$-form $\beta$ on $M$ such that $[\beta|_{X}] = \mathfrak{k}$.
\end{proof}
\begin{remark}
	Here we use the fact that $\mathrm{dim}_\mathbb{C}X = n\geqslant 3$ to apply Kawamata–Viehweg vanishing theorem. When $n=2$ we have $H^2(M,K_M)\simeq H^{0,0}(M)\simeq \mathbb{C}$, so $H^1(D,\mathbb{C})$ may not vanish.
\end{remark}
The global $(1,1)$ form $\beta$ on $M$ satisfies the following property on the end:
\begin{proposition}
    Let $\Phi: \mathcal{C}\setminus \mathcal{K}\to X\setminus K$ be the fixed diffeomorphism and let $p:\Ca\to D$ be the projection map. Then $\left|\Phi^*\beta-p^*(\left.\beta\right|_D)\right|_{\omega_\mathcal{C}}=O(e^{-(\frac{1}{2}-\epsilon) z^n})$.
\end{proposition}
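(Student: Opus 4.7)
My plan is to reduce to a local coordinate computation in a tubular neighborhood of $D$. Since the diffeomorphism $\Phi$ of Proposition~\ref{exponential close} is induced by the exponential map, it extends smoothly across the zero section of $N_D$ to a diffeomorphism $\bar\Phi$ from a neighborhood of the zero section onto a neighborhood of $D$ in $M$, restricting to the identity along $D$. I would cover $D$ by finitely many coordinate charts and work in each chart with the holomorphic coordinates $(z_1,\dots,z_{n-1},w)$ already used in the Calabi ansatz description above, so that $D=\{w=0\}$ in a matching chart on $M$ and $|w|\sim e^{-z^n/2}$ as $z\to\infty$ on $\Ca$.

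In such coordinates I decompose $\beta=\beta^H+\beta^V$, where $\beta^H=\sum A_{ij}(z,w)\,dz_i\wedge d\bar z_j$ collects the purely horizontal terms and $\beta^V$ contains all components carrying a factor of $dw$ or $d\bar w$. Then $p^*(\beta|_D)=\sum A_{ij}(z,0)\,dz_i\wedge d\bar z_j$ on $N_D$, and the error $\Phi^*\beta-p^*(\beta|_D)$ splits into (i) the horizontal piece $\sum\bigl[A_{ij}(\bar\Phi(z,w))\,\bar\Phi^*(dz_i\wedge d\bar z_j)-A_{ij}(z,0)\,dz_i\wedge d\bar z_j\bigr]$, and (ii) the contribution of $\bar\Phi^*\beta^V$ together with all cross terms in $\bar\Phi^*\beta^H$ that acquire a $dw$ or $d\bar w$ factor from the pullback.

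For (i), smoothness of $\beta$ combined with $|\bar\Phi(z,w)-(z,0)|=O(|w|)$ gives $A_{ij}(\bar\Phi(z,w))-A_{ij}(z,0)=O(|w|)$; since $|dz_i\wedge d\bar z_j|_{\omega_\Ca}\sim z^{-1}$, this coefficient perturbation contributes $O(z^{-1}e^{-z^n/2})$ to the $\omega_\Ca$-norm. The remaining pieces of (i), coming from the perturbation of $\bar\Phi^*(dz_i)-dz_i$, all carry a $dw$ or $d\bar w$ factor and are handled exactly as in (ii). For (ii), the explicit Calabi ansatz yields $|dw|_{\omega_\Ca}^2\sim z^{n-1}|w|^2=O(z^{n-1}e^{-z^n})$, so every basic form containing a $dw$ or $d\bar w$ factor has $\omega_\Ca$-norm at most $O(z^{(n-1)/2}e^{-z^n/2})$; because the relevant coefficients of $\beta^V$ and of $\bar\Phi^*$ are uniformly bounded, (ii) contributes at most $O(z^N e^{-z^n/2})$ for some $N=N(n)$.

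Combining the finitely many local estimates across the cover and absorbing polynomial factors in $z$ into an arbitrary $\epsilon>0$ then yields $|\Phi^*\beta-p^*(\beta|_D)|_{\omega_\Ca}=O(e^{-(1/2-\epsilon)z^n})$. The only real burden is the bookkeeping of which terms in $\bar\Phi^*\beta$ remain horizontal and which pick up a $dw$-factor; once this dichotomy is set up the exponential decay with rate $1/2$ is automatic, arising either from smoothness of $\beta$ on $M$ (providing the factor $|w|\sim e^{-z^n/2}$) or from the exponentially small $\omega_\Ca$-norm of $dw$. The argument is morally the same as the one used to prove Proposition~\ref{exponential close} in Hein--Sun--Viaclovsky--Zhang, now applied to an arbitrary smooth $(1,1)$-form on $M$ rather than to $\omega_X$ specifically.
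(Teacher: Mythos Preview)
Your approach is essentially the same as the paper's: work in local holomorphic coordinates $(z_1,\dots,z_{n-1},w)$ near $D$, use that $\bar\Phi|_D=\mathrm{Id}$, and observe that every term in $\Phi^*\beta-p^*(\beta|_D)$ either carries a factor of $|w|\sim e^{-z^n/2}$ from smoothness, or carries a $dw$ or $d\bar w$ whose $\omega_{\mathcal C}$-norm is already $O(z^{(n-1)/2}e^{-z^n/2})$. The paper packages the same dichotomy slightly differently, invoking the exponential closeness of $\Phi^*J_X$ and $J_{\mathcal C}$ from Proposition~\ref{exponential close} and then summarizing the conclusion as ``$\Phi^*\beta-p^*(\beta|_D)$ extends smoothly across the zero section and vanishes there''.

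One small inaccuracy: you assert that the perturbation $\bar\Phi^*(dz_i)-dz_i$ \emph{only} produces terms with a $dw$ or $d\bar w$ factor. This is not quite true. Writing $\bar\Phi_i(z,w)=z_i+O(w,\bar w)$, the differential $d\bar\Phi_i$ also contains $dz_k$ and $d\bar z_k$ components with coefficients that are $O(|w|)$ (coming from differentiating the $O(w,\bar w)$ remainder in the base directions). These purely horizontal perturbations do not fall under your case (ii), but they are harmless: their $\omega_{\mathcal C}$-norm is $O(|w|)\cdot z^{-1/2}=O(z^{-1/2}e^{-z^n/2})$, handled exactly like your coefficient-perturbation term. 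With this correction your argument is complete.
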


\begin{proof}
    Let $p$ be a fixed point in $D$. Let $(w,\underline{z}) = (w, z_1, \cdots, z_n)$ be local holomorphic coordinates around this point such that $D$ is given by $\{w=0\}$. Then $(w,\underline{z})$ can also be seen as a group of local holomorphic coordinates around $p$ in $\Ca$ where $w$ represent the fiber direction. 
    We can express $\beta$ locally around $p$ on $M$ as 
    \begin{align*}
    \beta = &\sum_{i,j=1}^{n-1}f_{i\olsi j}dz_i\wedge d\bar z_j +\sum_{i=1}^{n-1}f_{i\bar 0} dz_i\wedge d\bar w+ \sum_{i=1}^{n-1}f_{0\olsi i}  d w\wedge d\bar z_i +f_{00}dw\wedge d\bar w,\\
        \Phi^*\beta = &\sum_{i,j=1}^{n-1}f_{i\olsi j}(\Phi) dz_i(\Phi)\wedge \Phi^* (J_X) d z_j(\Phi) +\sum_{i=1}^{n-1}f_{i\bar 0}(\Phi) dz_i(\Phi)\wedge  \Phi^* (J_X) d w(\Phi)\\
        &+ \sum_{i=1}^{n-1} f_{0\olsi i}(\Phi) d w(\Phi)\wedge \Phi^*(J_X) d z_i(\Phi)+f_{0\bar 0}(\Phi) dw(\Phi)\wedge  \Phi^* (J_X) d w(\Phi).
    \end{align*}

    Notice that we have the estimate of $\left|\nabla^k_{g_\Ca}w\right|=O(e^{-(\frac{1}{2}-\epsilon)z^n})$, $\left|\nabla^k_{g_\Ca}z_i\right|=O(1)$, $\left.\Phi\right|_D = \mathrm{Id}$, and the complex structure $\Phi ^*J_X-J_C$ is exponentially decay as in Proposition \ref{exponential close}, we know that on $\Ca$ we have
    \begin{align*}
        \left.(\Phi^*\beta)\right|_D = &\sum_{i,j=1}^{n-1}f_{i\olsi j}(0,\underline{z})dz_i\wedge J_\Ca d z_j +O(e^{-(\frac{1}{2}-\epsilon)z^n}).
    \end{align*}
    On the other hand, we know that $\left.p^*(\left.\beta\right|_D)\right|_D = \sum_{i,j=1}^{n-1}f_{i\olsi j}(0,z_i)dz_i\wedge d\bar z_j$. So $\Phi^*\beta-p^*(\left.\beta\right|_D)$ extends to a smooth form on $N_D$ vanishing on the zero section $D$, which yields $\left|\Phi^*\beta-p^*(\left.\beta\right|_D)\right|_{\omega_\mathcal{C}}=O(e^{-(\frac{1}{2}-\epsilon) z^n})$.
\end{proof}

\subsection{Iteration process} With this exponential closeness, we can view $\Phi^*\beta$ as a $(1,1)$-form on $\Ca$ with only horizontal direction component. This will greatly simplify our computation below.
\begin{definition}
	Let $\eta$ be a $(1,1)$-form on $\mathcal{C}$. We define $F(\eta):=1-\frac{(\omega_\mathcal{C}+\eta)^n}{\omega_\mathcal{C}^n}$, called \emph{$\omega_\mathcal{C}$-potential}.
\end{definition}
\begin{definition}\label{iteration process} With the same $\beta$ as before, we define by iteration
\begin{align*}
	F_0:= F(p^*(\left.\beta\right|_D)),\quad F_j:= F(p^*(\left.\beta\right|_D)+i\pp\bp U_j) = 1-\frac{(\omega_\mathcal{C}+p^*(\left.\beta\right|_D)+i\pp\bp U_j)^n}{\omega_\mathcal{C}^n}
\end{align*}
where \begin{align*}
U_0 = 0,\quad U_{j} = U_{j-1}+u_{j}, \quad \Delta_{\omega_\mathcal{C}}u_{j} =F_{j-1}.
\end{align*}
Here $u_{j}$ is the solution constructed in Proposition \ref{solve poisson equation}. We will prove in Proposition \ref{improve the decay of f_0} that the derivative of $F_j$'s satisfy the decay condition required for $v$ in Proposition \ref{solve poisson equation} so this iteration process works.
\end{definition}

\begin{proposition}\label{improve the decay of f_0}
	With $U_j$'s and $u_j$'s defined in \ref{iteration process}, we have $F_n= F(p^*(\left.\beta\right|_D)+i\pp\bp U_{n})$ decays faster than $z^{-n}$. More precisely, for any positive integer $j$ and $k$
	\begin{equation}\label{higher regularity of F_j}
   \|z^\frac{k}{2}\nabla^kF_j\|_{\omega_\Ca}\leqslant C_{k,j} z^{-j-1+\epsilon}.
	\end{equation}
\end{proposition}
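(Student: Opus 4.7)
The plan is to prove the estimate \eqref{higher regularity of F_j} by induction on $j\geqslant 0$; the case $j=n$ then delivers $|F_n|=O(z^{-n-1+\epsilon})$, which is faster than $z^{-n}$. The conceptual point is that, by construction $\Delta_{\omega_\Ca}u_j=F_{j-1}$, so the linear-in-$u_j$ contribution to the Monge-Amp\`ere expansion of $F_j-F_{j-1}$ cancels exactly, leaving only genuinely quadratic (and higher) remainders which decay at least one more power of $z$ than $F_{j-1}$.

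\textbf{Base case $j=0$.} I would first compute $F_0$ in closed form. Because $p^*(\beta|_D)$ is a horizontal $(1,1)$-form pulled back from $D$ and $\omega_\Ca$ is a horizontal-plus-rank-one-vertical sum, we have $(p^*(\beta|_D))^n=0$ for dimensional reasons, and the binomial expansion reduces to
$$F_0 = -\sum_{\ell=1}^{n-1}\binom{n-1}{\ell}\,z^{-\ell}\,p^*\!\left(\frac{(\beta|_D)^\ell\wedge \omega_D^{n-1-\ell}}{\omega_D^{n-1}}\right).$$
Combining this with the scaling estimates $|\nabla z|_{\omega_\Ca}=O(z^{-(n-1)/2})$, $|\nabla^k z|_{\omega_\Ca}=O(z^{1-n-k/2})$ for $k\geqslant 2$, and $|\nabla^k(p^*a)|_{\omega_\Ca}=O(z^{-k/2})$ for smooth functions $a$ on $D$, a Leibniz count yields $|z^{k/2}\nabla^k F_0|_{\omega_\Ca}=O(z^{-1})$, which is stronger than what \eqref{higher regularity of F_j} demands at the $j=0$ end of the induction.

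\textbf{Inductive step.} Suppose \eqref{higher regularity of F_j} holds for $F_{j-1}$ with exponent $-j+\epsilon$. Proposition \ref{solve poisson equation} applied with $\delta=-j+\epsilon$ produces $u_j$ satisfying
$$|z^{k/2}\nabla^k(i\pp\bp u_j)|_{\omega_\Ca}\,=\,O(z^{-j+\epsilon}),\qquad k\in\mathbb{N}_0.$$
Set $\eta_{j-1}:=p^*(\beta|_D)+i\pp\bp U_{j-1}$ and $\omega_{j-1}:=\omega_\Ca+\eta_{j-1}$. Expansion gives
\begin{align*}
\frac{(\omega_{j-1}+i\pp\bp u_j)^n}{\omega_\Ca^n} \;=\; \frac{\omega_{j-1}^n}{\omega_\Ca^n} \;+\; \frac{n\,\omega_{j-1}^{n-1}\wedge i\pp\bp u_j}{\omega_\Ca^n} \;+\; \sum_{\ell=2}^n\binom{n}{\ell}\frac{\omega_{j-1}^{n-\ell}\wedge(i\pp\bp u_j)^\ell}{\omega_\Ca^n}.
\end{align*}
Splitting $\omega_{j-1}^{n-1}=\omega_\Ca^{n-1}+(\omega_{j-1}^{n-1}-\omega_\Ca^{n-1})$, the piece $n\omega_\Ca^{n-1}\wedge i\pp\bp u_j/\omega_\Ca^n$ equals $\Delta_{\omega_\Ca}u_j=F_{j-1}$ (modulo the fixed normalisation used in the paper), and the definition $1-F_{j-1}=\omega_{j-1}^n/\omega_\Ca^n$ produces the crucial cancellation, leaving
$$F_j \;=\; -\frac{n(\omega_{j-1}^{n-1}-\omega_\Ca^{n-1})\wedge i\pp\bp u_j}{\omega_\Ca^n} \;-\; \sum_{\ell=2}^n\binom{n}{\ell}\frac{\omega_{j-1}^{n-\ell}\wedge(i\pp\bp u_j)^\ell}{\omega_\Ca^n}.$$
From the base case bound on $p^*(\beta|_D)$ together with the inductive hypothesis applied to $F_0,F_1,\dots,F_{j-2}$ (and the estimates on each $u_l$), one gets $|z^{k/2}\nabla^k\eta_{j-1}|_{\omega_\Ca}=O(z^{-1+\epsilon})$ and hence $|z^{k/2}\nabla^k(\omega_{j-1}^{n-1}-\omega_\Ca^{n-1})|_{\omega_\Ca}=O(z^{-1+\epsilon})$. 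A Leibniz estimate then bounds the first remainder term by $O(z^{-j-1+\epsilon})$ in the required norm, while each higher-order summand is $O(z^{-\ell j+\epsilon})\subseteq O(z^{-j-1+\epsilon})$ since $\ell j\geqslant j+1$ for $\ell\geqslant 2$ and $j\geqslant 1$. This closes the induction.

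\textbf{Main obstacle.} The principal technical difficulty is the higher-derivative bookkeeping in the inductive step: applying $\nabla^k$ to $F_j$ distributes derivatives across many factors (including the volume form $\omega_\Ca^n$ in the denominator), and one must verify that every resulting Leibniz term respects the scaling $|\nabla^k\cdot|_{\omega_\Ca}\sim z^{-k/2}\cdot(\text{scale-invariant size})$. This is controlled by the $\operatorname{HMG}(\tfrac{1}{n+1},k,\alpha)$ property of $\omega_\Ca$ together with the Christoffel-symbol estimates summarised in \eqref{estimate of t}, which ensure derivatives of $\omega_\Ca$ and of horizontal pullbacks obey the expected scaling. A secondary bookkeeping concern is that Proposition \ref{solve poisson equation} loses a factor $z^\epsilon$ at each iteration; since only $n$ iterations are performed one simply begins the argument with $\epsilon$ rescaled by $1/n$ so that the total loss still fits inside the stated $\epsilon$.
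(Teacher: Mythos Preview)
Your proposal is correct and follows essentially the same inductive strategy as the paper: compute $F_0$ explicitly, then use the cancellation $\Delta_{\omega_\Ca}u_j=F_{j-1}$ to remove the linear contribution and bound the remaining cross terms and higher powers by $O(z^{-j-1+\epsilon})$. The only cosmetic difference is that the paper organises the higher-derivative bookkeeping by passing to the rescaled metric $\tilde\omega_\Ca=z_0^{-1}\omega_\Ca$ on unit balls (so that all the factors carry scale-invariant $C^k$ bounds and the wedge-product estimate becomes a simple multiplication of $z_0$-powers), whereas you invoke the Leibniz rule directly together with the $\mathrm{HMG}$ property and the estimates~\eqref{estimate of t}; these are equivalent packagings of the same computation.
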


\begin{proof}
We prove (\ref{higher regularity of F_j}) by induction.

For $F_0$, we can see this estimate directly follows from computation:
\begin{align*}
    F_0= 1-\frac{(\omega_\mathcal{C}+p^*(\left.\beta\right|_D))^n}{\omega_\mathcal{C}^n}
    =\sum_{j=1}^n\frac{n-j}{nz^j}\cdot p^*(\frac{\left.\beta\right|_D^j\wedge \omega_D^{n-j-1}}{\omega_D^{n-1}}).
\end{align*}
By (\ref{estimate of t}) we know that $|z^\frac{k}{2}\nabla^kF_0|_{\omega_\Ca}\leqslant C(k) z^{-1}$, for any positive integer $k$. So when $j=0$ (\ref{higher regularity of F_j}) holds.

Assume (\ref{higher regularity of F_j}) holds for $i\leqslant j$, i.e. $|z^\frac{k}{2}\nabla^kF_i|_{\omega_\Ca}\leqslant C(k,i) z^{-i-1}$, for any $k\geqslant 0$. 
By straightforward computation,  
\begin{align*}
F_{j+1}&=- \frac{ni\pp\bp u_{j+1}\wedge\sum_{k=1}^{n-1}{n-1 \choose k}(p^*(\left.\beta\right|_D)+i\pp\bp U_j)^k\wedge\omega_{\mathcal{C}}^{n-k-1}}{\omega_{\mathcal{C}}^n}\\
 &\qquad\qquad\qquad\qquad-\frac{\sum_{k=2}^{n}{n \choose k}\left(i\pp\bp u_{j+1}\right)^k\wedge\left(\omega_\mathcal{C}+p^*(\left.\beta\right|_D)+i\pp\bp U_j\right)^{n-k}}{\omega_{\mathcal{C}}^n}.
\end{align*}
Actually, the function in each term is of the from 
\begin{align}\label{each term in F_j+1}
	\frac{Ci\pp\bp u_{j+1}\wedge \bigwedge_{q\in Q} i\pp\bp u_q\wedge p^*(\left.\beta\right|_D)^m\wedge\omega_\mathcal{C}^{n-m-|Q|-1}}{\omega_\mathcal{C}^n},
\end{align} 
where $Q$ is a set with repeated elements from $\{1,2,\cdots, j+1\}$, $m$ is a non-negative integer and positive when $j+1\notin Q$. By Proposition \ref{solve poisson equation} we know that 
$\left|z^{\frac{k}{2}}\nabla^k u_{j+1}\right|_{\omega_{\mathcal{C}}}\leqslant C_{k,j}z^{-j+\epsilon}$.
It is easier to deduce the bound for (\ref{each term in F_j+1}) by passing to the rescaled metric $\tilde\omega_{\Ca}$ on $B_{\tilde\omega_\Ca}(x,1)$, where $z(x)= z_0$, $\tilde\omega_{\Ca} = \frac{\omega_{\Ca}}{z_0}$. We have the uniform weighted bound for each term in the wedge product
\begin{align*}
    \left\|\nabla^k i\pp\bp u_q\right\|_{C^0(B_{\tilde\omega_\Ca}(x,1))}\leqslant C_{k,q} z_0^{-q+1+\epsilon},
    \left\|\nabla^k i\pp\bp u_{j+1}\right\|_{C^0(B_{\tilde\omega_\Ca}(x,1))}\leqslant C_{k,j} z_0^{-j+\epsilon},\\
    \left\|\nabla^k p^*(\left.\beta\right|_D)\right\|_{C^0(B_{\tilde\omega_\Ca}(x,1))}\leqslant C_k.
    \end{align*}

If we consider the scaled metric $\tilde\omega_\Ca$ our function (\ref{each term in F_j+1}) becomes
\begin{align*}
    \frac{Ci\pp\bp u_{j+1}\wedge \bigwedge_{q\in Q} i\pp\bp u_q\wedge p^*(\left.\beta\right|_D)^m\wedge\tilde\omega_\mathcal{C}^{n-m-|Q|-1}}{z_0^{m+|Q|+1}\tilde\omega_\mathcal{C}^n}.
\end{align*}
So we have
\begin{align*}
    \left\|\nabla^k\frac{Ci\pp\bp u_{j+1}\wedge \bigwedge_{q\in Q} i\pp\bp u_q\wedge p^*(\left.\beta\right|_D)^m\wedge\tilde\omega_\mathcal{C}^{n-m-|Q|-1}}{z_0^{m+|Q|+1}\tilde\omega_\mathcal{C}^n}\right\|_{C^0(B_{\tilde\omega_\Ca}(x,1))}\leqslant C_{k,j,Q,m}z_0^{-j-2+\epsilon}.
\end{align*}
Consequently, 
\begin{align*}
    \left\|z_0^{\frac{k}{2}}\nabla^k F_{j+1}\right\|_{C^0(B_{\omega_\Ca}(x,\sqrt{z_0}))} = \left\|\nabla^k F_{j+1}\right\|_{C^0(B_{\tilde\omega_\Ca}(x,1))}\leqslant C_{k,j}z_0^{-j-2+\epsilon}.
\end{align*}
So we finish the proof of (\ref{higher regularity of F_j}). Specially,
\begin{align*}
    \left|z^{\frac{k}{2}}\nabla^k F_{n}\right|_{\omega_\Ca} \leqslant C_kz^{-n-1+\epsilon}.
\end{align*}
\end{proof}

\section{The Integral Condition}\label{integral condition}

For the convenience of statement, let us introduce the following notation.
\begin{definition}
	For an $(n,0)$ form $\Omega$, we say that a (1,1)-form $\alpha$ is \emph{$\Omega$-compatible} if $\int_X \Omega\wedge\ols\Omega -\alpha^n=0$.
\end{definition}
\begin{remark}
    Since $\Omega\wedge\ols\Omega$ and $\alpha^n$ are not integrable for most of the time, this integration identity means that the function $f = \frac{\alpha^n}{\Omega\wedge\ols\Omega}-1$ satisfies $\int_X f \Omega\wedge\ols\Omega = 0$.
\end{remark}

In this section, we will show that by adding a suitable potential we can make $\beta+i\pp\bp U$ to be $\Omega_X$-compatible.
	
\begin{proposition}\label{compatible condition}
There exists a smooth function $\tilde{U}$ on $X$ such that $\beta+i\pp\bp \tilde{U}$ is an $\Omega_X$-compatible K\"ahler form. Meanwhile, we have that \begin{align*}
    \left|\Phi^*\tilde{U} - (z^{n+1} +U_n+\lambda z)\right| \leqslant Ce^{-\delta z^n}
\end{align*} when $z>C$ for some constant $C>0$ and $\lambda\in\RR$.
\end{proposition}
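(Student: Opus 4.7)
My plan is to construct $\tilde U$ in three stages: extend the Calabi-model potential $z^{n+1}+U_n$ from the end to all of $X$ via a cutoff, add a correction to secure global positivity, and then use $\lambda z$ as a one-parameter family to enforce the compatibility integral. For the first stage, fix a smooth cutoff $\chi:X\to[0,1]$ vanishing on a compact set $K_0\supset K$ and equal to $1$ outside a slightly larger compact $K_1$ with $X\setminus K_1$ contained in the image of $\Phi$, and set $\tilde U^{(0)}:=\chi\cdot(\Phi^{-1})^*(z^{n+1}+U_n)$ (extended by $0$). Using the exponential proximity $\Phi^*\beta-p^*(\beta|_D)=O(e^{-(1/2-\epsilon)z^n})$ from Section~\ref{decay faster than quadratic} together with $i\pp\bp z^{n+1}=\tfrac{n+1}{n}\omega_{\mathcal{C}}$, on the end $\{z\geq z_1\}$ we have
\[
\Phi^*(\beta+i\pp\bp\tilde U^{(0)})=\tfrac{n+1}{n}\omega_{\mathcal{C}}+p^*(\beta|_D)+i\pp\bp U_n+O(e^{-\delta z^n}),
\]
which is positive since $\omega_{\mathcal{C}}$ dominates both $p^*(\beta|_D)$ (bounded) and $i\pp\bp U_n$ (decaying by Proposition~\ref{solve poisson equation}). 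To secure positivity on the interior, add a compactly-supported correction $\phi_1$ with $\beta+i\pp\bp(\tilde U^{(0)}+\phi_1)>0$ everywhere; its existence uses $\mathfrak{k}\in H_+^2(X)$ and a standard partition-of-unity/gluing argument built from local strictly plurisubharmonic potentials. Set $\tilde U^{(1)}:=\tilde U^{(0)}+\phi_1$ and $\omega_1:=\beta+i\pp\bp\tilde U^{(1)}$; since $\phi_1$ has compact support, $\Phi^*\tilde U^{(1)}=z^{n+1}+U_n$ exactly on the end.

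For the second stage, Proposition~\ref{improve the decay of f_0} combined with the Calabi-ansatz identity $\omega_{\mathcal{C}}^n=c_0\,\Omega_{\mathcal{C}}\wedge\ols{\Omega}_{\mathcal{C}}$ and the exponential convergence $\Omega_X\to\Omega_{\mathcal{C}}$ gives on the end
\[
\omega_1^n-\Omega_X\wedge\ols{\Omega}_X=O(z^{-n-1+\epsilon})\,\omega_{\mathcal{C}}^n+O(e^{-\delta z^n}).
\]
Since the $\omega_{\mathcal{C}}$-volume of $\{z\leq R\}$ grows like $R^n$, this integrand is absolutely integrable, so $\mathfrak{d}:=\int_X(\Omega_X\wedge\ols{\Omega}_X-\omega_1^n)$ is a finite real number.

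For the third stage, define $\tilde U_\lambda:=\tilde U^{(1)}+\lambda\,\chi\cdot(\Phi^{-1})^*z$ and $\omega_\lambda:=\beta+i\pp\bp\tilde U_\lambda$. A short computation shows $|i\pp\bp z|_{\omega_{\mathcal{C}}}=O(z^{-n})$ on the end, and $i\pp\bp(\chi z)$ is bounded on the cutoff region, so $\omega_\lambda$ stays Kähler for $\lambda$ in a bounded range (if necessary, rescale $\omega_1$ so the required $\lambda$ lies in this range). Set $I(\lambda):=\int_X(\omega_\lambda^n-\omega_1^n)$. Since $\omega_\lambda$ and $\omega_1$ are closed,
\[
\omega_\lambda^n-\omega_1^n=\lambda\cdot d\Bigl(i\bp\tilde z\wedge\sum_{k=0}^{n-1}\omega_\lambda^k\wedge\omega_1^{n-1-k}\Bigr),\qquad \tilde z:=\chi\cdot(\Phi^{-1})^*z,
\]
and Stokes' theorem on the exhaustion $\{z\leq R\}$ reduces $I(\lambda)$ to a limit of boundary integrals over $\{z=R\}$. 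Using $\bp z=\tfrac{1}{nz^{n-1}}\bp t$ and $\omega_{\mathcal{C}}^{n-1}=z^{n-1}(\pi^*\omega_D)^{n-1}+\tfrac{n-1}{nz}(i\pp\bp t)^{n-2}\wedge i\pp t\wedge\bp t$, the fiber term drops out (it contains $\bp t\wedge\bp t=0$) and the horizontal part reduces, after integrating the $S^1$-fiber, to $-\tfrac{2\pi}{n}\int_D\omega_D^{n-1}$ times the leading scaling $(\tfrac{n+1}{n})^{n-1}$, independently of $R$. Hence $I'(0)=c_\ast\neq 0$, and by the intermediate value theorem there exists $\lambda_\ast\in\mathbb{R}$ with $I(\lambda_\ast)=-\mathfrak{d}$. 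Setting $\tilde U:=\tilde U_{\lambda_\ast}$ produces the $\Omega_X$-compatible Kähler form $\omega=\beta+i\pp\bp\tilde U$; on $\{z>z_1\}$ we have $\chi\circ\Phi=1$ and $\phi_1=0$, so $\Phi^*\tilde U=z^{n+1}+U_n+\lambda_\ast z$ exactly, trivially satisfying the exponential bound.

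The main obstacle is the nonvanishing of $c_\ast$ in the third stage: one must track how the $z^{-(n-1)}$ decay of $\bp z$ conspires with the $z^{n-1}(\pi^*\omega_D)^{n-1}$ leading part of $\omega_{\mathcal{C}}^{n-1}$ and the $S^1$-fiber integration on $\{z=R\}$ to produce a finite, nonzero boundary limit. This is the concrete manifestation of the ``harmonic moment map'' interpretation of $z$ alluded to in the outline, and it also encodes the equivalence between the $\lambda z$ adjustment and a one-parameter rescaling of the metric $h_D$ on the normal bundle.
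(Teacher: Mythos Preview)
Your overall strategy mirrors the paper's --- extend the model potential by cutoff, secure positivity in the interior, and use the one-parameter family $\lambda z$ to hit the integral condition --- but there are two genuine gaps.

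First, the interior positivity step is hand-waved. You claim a compactly supported $\phi_1$ exists making $\beta+i\pp\bp(\tilde U^{(0)}+\phi_1)>0$ by ``a standard partition-of-unity/gluing argument built from local strictly plurisubharmonic potentials.'' Positivity of $(1,1)$-forms is not preserved under such gluing: the cross terms $i\pp\psi_i\wedge\bp u_i+i\pp u_i\wedge\bp\psi_i+u_i\,i\pp\bp\psi_i$ destroy it. The paper does real work here: it uses that $X$ is $1$-convex (so $-K_M$ is semi-ample with non-ample locus $E$) together with the generalized Demailly--P\u{a}un criterion of \cite{collins2016singular} --- this is precisely where the hypothesis $\mathfrak{k}\in H^2_+(X)$ enters --- to produce a potential $u_0$ making $\beta$ positive near $E$, and then adds a large multiple of $\chi_1(\mathfrak{t}/C_2)\cdot\mathfrak{t}$ (with $\mathfrak{t}=-\log|S|^2_{h_M}$, which satisfies $i\pp\bp\mathfrak{t}>0$ on $X\setminus E$) to take care of the complement. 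This is the substantive part of the construction and cannot be replaced by local gluing.

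Second, your order of operations in the third stage is backwards, and the proposed fix (``rescale $\omega_1$'') is not allowed since it changes the cohomology class from $\mathfrak{k}$ to $c\mathfrak{k}$. You fix the cutoff $\chi$ at the outset and then hope the required $\lambda_*$ lies in the bounded range where $\omega_\lambda$ stays K\"ahler; there is no reason it should, and invoking the intermediate value theorem from $I'(0)\neq 0$ alone does not guarantee $-\mathfrak{d}$ is in the range of $I$. The paper instead observes that the compatibility condition is \emph{linear} in $\lambda$ with coefficient $\int_D\omega_D^{n-1}\neq 0$, and --- crucially --- that this boundary integral is independent of the cutoff $\chi_2$. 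So one solves for $\lambda_0$ first, and \emph{then} pushes the support of $\chi_2$ far enough out that $\lambda_0\,i\pp\bp(\chi_2\cdot(\Phi^{-1})^*z)$, which is $O(z^{-n})$ on the end, is dominated by $\omega_\Ca$ and hence preserves positivity.
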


\begin{proof}
We first show that we can find $U$ such that $(\beta+i\pp\bp U+\omega_X)^n$ is integrable on the end. Recall that we have $U_j = \sum_{p=1}^j u_p$ and $u_j$ are functions on $\mathcal{C}$ such that 
$$\Delta_{\omega_\mathcal{C}} u_{j+1} = 1-\frac{\left(p^*(\left.\beta\right|_D)+\omega_\Ca+i\pp\bp U_j\right)^n}{\omega_\Ca^n} = F_j.$$ 
We know that the following integration is finite since $|F_n|\leqslant C_nz^{-n-1+\epsilon}$:
\begin{align*}
	&\quad \left|\int_{\mathcal{C}\setminus\mathcal{K}}(\Phi^*\beta+\omega_\mathcal{C}+i\pp\bp U_{n})^n-\omega_{\mathcal{C}}^n\right| \\
	&\leqslant \int_{\mathcal{C}\setminus\mathcal{K}}|F_{n}|(i\pp\bp t)^{n-1}dt\wedge d^c t +\left|\int_{\mathcal{C}\setminus\mathcal{K}}(\Phi^*\beta+\omega_\mathcal{C}+i\pp\bp U_{n})^n-(p^*(\left.\beta\right|_D)+\omega_{\mathcal{C}}+i\pp\bp U_{n})^n\right|\\
	&=\int_{\{t=T\}}nT^{\frac{-1+\epsilon}{n}}(i\pp\bp t)^{n-1}\wedge d^c t+C
    <+\infty.
 \end{align*}

Since we have the exponentially closed estimate between $X\setminus K$ and $\mathcal{C}\setminus\mathcal{K}$, we will have $$\int_{X\setminus K}\left(\beta +i\pp\bp (\Phi^{-1})^*U_{n}+\omega_X \right)^n-\omega_{X}^n = \int_{\mathcal{C}\setminus\mathcal{K}}(\Phi^*\beta +i\pp\bp U_n+\omega_\Ca)^n-\omega_{\mathcal{C}}^n +O(e^{-\delta z^n}) <+\infty.$$

Now we can construct the K\"ahler potential following the construction in Hein-Sun-Viaclovsky-Zhang \cite[Lemma 2.7.]{hsvz2}:

The ampleness of $N_D$ implies that $X$ is 1-convex. Hence by Remmert reduction we know that $-K_{M}$ is semi-ample, we denote its non-ample locus by $E$. Recall that $[\beta ]^p\cdot Y>0$ for any $p$-dimensional compact subvariety $Y$ in $X$, by the generalized Demailly-P\u{a}un criterion in \cite{collins2016singular} we know that there exists a smooth function $u_0$ on $X$ such that $\beta+i\pp\bp u_0$ is positive on the neighborhood $U$ of $E$. Let $\chi_0$ be a smooth function on $X$ support on $U$ and $\chi_0 = 1$ on $E$. 
Then $\beta+ i\pp\bp\left(\chi_0\cdot u_0\right)$ is positive around $E$ and  $i\pp\bp\left(\chi_0\cdot u_0\right)$ is supported on $U$.

Let $\mathfrak{t}=-\log|S|^2_{h_M}$, then the curvature form $i\pp\bp \mathfrak{t}\geqslant 0$ and $i\pp\bp \mathfrak{t}>0$ on $X\setminus E$. Let $\chi_1$ be a smooth cutoff function on $[0,+\infty)$ such that $\chi_1 = 1$ on $[0,1]$ and $\chi_1 = 0$ on $[2,+\infty)$. Then $i\pp\bp (\chi_1(\tfrac{\mathfrak{t}}{C_2})\cdot \mathfrak{t}) $ is positive on $\{\mathfrak{t}\leqslant C_2\}$ and supported on $\{\mathfrak{t}\leqslant 2 C_2\}$.

Let $\rho_A$ be a smooth convex function on $\RR$ with $\rho_A(x) = \tfrac{2A}{3}$ on $(-\infty, \tfrac{A}{2}]$ and $\rho_A(x) = x$ on $(A, +\infty)$. Then we can obtain that $i\pp\bp(\rho_{C_3}(\mathfrak{t}^{\tfrac{n+1}{n}}))\geqslant 0$ on $X$ and $i\pp\bp(\rho_{C_3}(\tfrac{n}{n+1}\mathfrak{t}^{\tfrac{n+1}{n}})) =i\pp\bp(\tfrac{n}{n+1}\mathfrak{t}^{\tfrac{n+1}{n}}) $ on $\{\mathfrak{t}\geqslant 2 C_3^{\tfrac{n}{n+1}}\}$.

Let \begin{align*}
	\beta_1=\beta +i\pp\bp\left(\chi_0\cdot u_0+C_1\chi_1\left(\frac{\mathfrak{t}}{C_2}\right)\cdot \mathfrak{t} +\rho_{C_3}\left(\tfrac{n}{n+1}\mathfrak{t}^{\frac{n+1}{n}}\right)\right).
\end{align*} 
By our choice of $u_0$, $\beta+i\pp\bp u_0$ is positive around $E$. By choosing $C_1$ and $C_2$ large we can make $\beta_1$ is positive on $U$. Then choosing $C_2$ large enough depending on $C_1$ and $C_3$ we have that $\beta_1$ is positive on $\{C_2\leqslant \mathfrak{t}\leqslant 2 C_2\}$ hence K\"ahler on $X$.

Then we can glue our perturbation function $U_n$ via a cut-off function $\chi_2$ supported outside a compact set $K'$ with $\chi_2 = 1$ outside a open neighborhood $U$ of $K'$ and let 
$$\beta_2(\lambda) = \beta_1+i\pp\bp\left(\chi_2\cdot\left( (\Phi^{-1})^*(U_{n}+\lambda z)\right)\right).$$
Our goal next is to find suitable $\lambda$ and $K'$ such that $\beta_2(\lambda)$ is K\"ahler and $\Omega_X$-compatible.

 Let us first show that the $\Omega_X$-compatible condition is a linear equation of $\lambda$ and only the constant term depends on the choice of $\chi_2$. By our previous estimate of $U_{n}$ we know as a starting point that $\int_X\beta_2(0)^n -\Omega_X\wedge\ols\Omega_X =C$ is finite. The $\Omega_X$-compatible condition becomes
\begin{align*}
	0=&\int_X \beta_2(\lambda)^n-\Omega_X\wedge\ols\Omega_X = \int_X \beta_2(\lambda)^n-\beta_2(0)^n +\int_X\beta_2(0)^n -\Omega_X\wedge\ols\Omega_X\\
	&=\lim_{\varepsilon\to 0}\int_{\{(\Phi^{-1})^*t\leqslant -\log \varepsilon\}} \lambda i\pp\bp\left(\chi_2\cdot(\Phi^{-1})^*z\right)\wedge \sum_{k=0}^{n-1}{n\choose k}\beta_2(0)^k\wedge i\pp\bp\left(\lambda\chi_2\cdot(\Phi^{-1})^*z\right)^{n-k-1} +C\\
	& =\lim_{\varepsilon\to 0}\int_{\{t = -\log \varepsilon\}}\lambda d^ct \wedge \frac{1}{nz^{n-1}}\sum_{k=0}^{n-1}{n\choose k}(\omega_\mathcal{C}+\beta+i\pp\bp U_{n})^k\wedge (\lambda i\pp\bp z)^{n-k-1} +C.
\end{align*}
Expanding the terms in the bracket, we notice that only $\omega_\mathcal{C}^{n-1}$ remains non-vanishing after we take the limit, so we have the equation
\begin{align*}
	0&=\lim_{\varepsilon\to 0}\int_{\{t = -\log \varepsilon\}}\lambda d^ct \wedge (i\pp\bp t)^{n-1}+C = \lambda \cdot\int_D \omega_D^{n-1}+C.
\end{align*}
This is a linear equation on $\lambda$. On the other hand, we also notice by the previous computation that $\chi_2$ does not affect the integral, so we can choose $\lambda_0$ first to satisfy the integral condition and then choose $K'$ large enough such that $\beta_2(\lambda_0)$ is K\"ahler.
So by choosing $$\tilde{U} =\chi_0\cdot u_0+C_1\chi_1\left(\tfrac{\mathfrak{t}}{C_2}\right)\cdot \mathfrak{t} +\rho_{C_3}\left(\tfrac{n}{n+1}\mathfrak{t}^{\frac{n+1}{n}}\right)+ \chi_2\cdot\left(\left(\Phi^{-1}\right)^*(U_{n}+\lambda z)\right)$$ we finish our proof.
\end{proof}

In order to apply Tian-Yau-Hein's package, we need to repeat the iteration process for one more step such that the $\omega_\Ca$-potential of $\beta+i\pp\bp U$ decays faster than $r^{-2}$.

\begin{proposition}\label{final potential}
	Furthermore, we can construct a K\"ahler $\Omega_X$-compatible form $\beta+i\pp\bp U$ on $X$ such that 
    \begin{align*}
    \left|1-\frac{(\beta+i\pp\bp U)^n}{\Omega_X\wedge\ols\Omega_X}\right|\leqslant C r^{-2-\epsilon},\qquad \left|\,\omega_\mathcal{C}-\Phi^*(\beta+i\pp\bp U)\right|_{\omega_\Ca}\leqslant C z^{-1},
    \end{align*} where $r$ is the distance function to some point $p \in X$ under metric $\beta+i\pp\bp U$,  $C>0$.
\end{proposition}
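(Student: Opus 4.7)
The plan is to build on Proposition \ref{compatible condition} by running one additional step of the Poisson-equation iteration from Definition \ref{iteration process}. On the end, $r$ is comparable to $z^{(n+1)/2}$, so a $z^{-\kappa}$ decay of the $\omega_\Ca$-potential translates to $r^{-2\kappa/(n+1)}$. The bound $F_n = O(z^{-n-1+\epsilon})$ from Proposition \ref{improve the decay of f_0} only yields $r^{-2+2\epsilon/(n+1)}$, which is strictly slower than $r^{-2}$. Producing $F_{n+1} = O(z^{-n-2+\epsilon})$ would give $r^{-2-\epsilon'}$ for some $\epsilon'>0$ as soon as $n\geqslant 2$, which is exactly what the statement asks for.

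Concretely, let $\tilde U$ be the potential from Proposition \ref{compatible condition}, so $\Phi^*(\beta+i\pp\bp \tilde U)$ equals $\omega_\Ca + p^*(\beta|_D) + i\pp\bp U_n + \lambda\, i\pp\bp z$ modulo $O(e^{-\delta z^n})$. The function $z$ is harmonic on $\Ca$: its Fourier expansion contains only the $j=0$, $\lambda_0=0$ mode, and the ODE (\ref{ODE}) is solved by $u_0(z)=z$ with zero source. Consequently $\lambda\, i\pp\bp z$ contributes to the $\omega_\Ca$-potential only at quadratic order $O(z^{-2n})$, so the current potential on $\Ca$ agrees with $F_n$ to leading order and its weighted derivatives satisfy the hypothesis of Proposition \ref{solve poisson equation}. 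Applying that Proposition with $\delta=-n-1+\epsilon$ gives $u_{n+1}$ solving $\Delta_{\omega_\Ca} u_{n+1}=F_n$; the same wedge expansion as in the proof of Proposition \ref{improve the decay of f_0} then shows that $F_{n+1}:=1-(\omega_\Ca+p^*(\beta|_D)+i\pp\bp(U_n+u_{n+1}))^n/\omega_\Ca^n$ decays like $z^{-n-2+\epsilon}$ with matching weighted derivative bounds.

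Next, paste $u_{n+1}$ back to $X$. Fix a cutoff $\chi'$ supported sufficiently far in the end and set $U := \tilde U + \chi'\cdot(\Phi^{-1})^*(u_{n+1}+\lambda' z)$ for a constant $\lambda'$ chosen to restore $\Omega_X$-compatibility. The compatibility condition is linear in $\lambda'$ by exactly the boundary-integral computation already carried out in the proof of Proposition \ref{compatible condition}; the cutoff and the exponentially small errors do not affect the integral, so $\lambda'$ is determined as a multiple of $(\int_D \omega_D^{n-1})^{-1}$. By harmonicity of $z$, the added term $\lambda'\, i\pp\bp z$ again contributes to the $\omega_\Ca$-potential only at order $O(z^{-2n})$, so the $z^{-n-2+\epsilon}$ decay from the previous step is preserved. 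K\"ahler positivity of $\beta+i\pp\bp U$ holds on the far end because $|i\pp\bp u_{n+1}|_{\omega_\Ca}$ and $|i\pp\bp z|_{\omega_\Ca}$ vanish at infinity, and on the compact interior one simply takes the support of $\chi'$ far enough out that $\beta+i\pp\bp U$ agrees there with the already-K\"ahler $\beta+i\pp\bp\tilde U$. Converting $z^{-n-2+\epsilon}$ via $z\sim r^{2/(n+1)}$ gives the first claimed estimate. The second, $|\omega_\Ca-\Phi^*(\beta+i\pp\bp U)|_{\omega_\Ca}\leqslant Cz^{-1}$, follows because on the end $\omega_\Ca-\Phi^*(\beta+i\pp\bp U)$ is dominated by $-p^*(\beta|_D)$, whose $\omega_\Ca$-norm is $O(z^{-1})$, while the remaining Hessians $i\pp\bp U_n$, $i\pp\bp u_{n+1}$, and $\lambda' i\pp\bp z$ are all of strictly smaller order.

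The main obstacle is the triple balancing act: the correction $u_{n+1}+\lambda' z$ must simultaneously preserve K\"ahler positivity, improve the $\omega_\Ca$-potential decay past $z^{-(n+1)}$, and restore $\Omega_X$-compatibility. Without the harmonicity of $z$, the compatibility adjustment would reintroduce a first-order contribution to the $\omega_\Ca$-potential of size $\sim z^{-n}$, which is slower than the $z^{-n-2+\epsilon}$ gained by the extra iterate and would destroy the improvement. The fact that $z$ is \emph{exactly} harmonic on $\Ca$ is what allows all three requirements to be met at once.
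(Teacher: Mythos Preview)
Your overall strategy --- run one more Poisson iterate past $\tilde U$ --- is exactly the paper's, but the order of operations you chose creates a genuine gap.

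The claim that ``$\lambda\, i\pp\bp z$ contributes to the $\omega_\Ca$-potential only at order $O(z^{-2n})$'' is incorrect. Harmonicity of $z$ only kills the pure trace term $\tfrac{i\pp\bp z\wedge\omega_\Ca^{n-1}}{\omega_\Ca^n}$. It does \emph{not} kill the cross terms that appear when you expand $(\omega_\Ca+\eta)^n$ with $\eta=p^*(\beta|_D)+i\pp\bp U_n$: the leading such term is
\[
n(n-1)\,\lambda\,\frac{i\pp\bp z\wedge p^*(\beta|_D)\wedge\omega_\Ca^{n-2}}{\omega_\Ca^n},
\]
and since $|i\pp\bp z|_{\omega_\Ca}=O(z^{-n})$ and $|p^*(\beta|_D)|_{\omega_\Ca}=O(z^{-1})$, this is of order $z^{-n-1}$ (nonzero unless $\beta|_D$ happens to be $\omega_D$-primitive, which is not assumed). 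Similar cross terms with $i\pp\bp u_1,\dots,i\pp\bp u_n$ are no better than $O(z^{-n-1+\epsilon})$. Since $z^{-n-1}\sim r^{-2}$ exactly, your final $\omega_\Ca$-potential is only $O(r^{-2})$, not $O(r^{-2-\epsilon})$; you have not crossed the $\mu>2$ threshold. The same objection applies to the extra $\lambda' z$ you add: it reintroduces cross terms of the same size.

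The paper avoids this by reversing the two steps. It \emph{first} absorbs the compatibility term into the iteration, setting $u_{n+1}:=\lambda z$ and computing $F_{n+1}$ (the potential of $p^*(\beta|_D)+i\pp\bp(U_n+\lambda z)$), which now \emph{includes} all the cross terms above and is $O(z^{-n-1+\epsilon})$. It \emph{then} solves $\Delta_{\omega_\Ca}u_{n+2}=F_{n+1}$, so the extra iterate cancels those cross terms as well, yielding $F_{n+2}=O(z^{-n-2+\epsilon})$. Finally, because $|\nabla u_{n+2}|_{\omega_\Ca}=O(z^{-(n+1)/2+\epsilon})$ decays, the boundary integral in the $\Omega_X$-compatibility computation vanishes and \emph{no further} $\lambda' z$ is needed --- compatibility is inherited from $\tilde U$. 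Your argument can be repaired simply by replacing the source $F_n$ in your Poisson step with the actual $\omega_\Ca$-potential of $\Phi^*(\beta+i\pp\bp\tilde U)$, and then dropping $\lambda'$ altogether after checking that the new iterate does not disturb the integral condition.
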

 
\begin{proof}
    Let $u_{n+1} = \lambda z$, $U_{n+1} = U_{n}+u_{n+1}$. Let $u_{n+2}$ be the solution of $\Delta_{\omega_\mathcal{C}}u_{n+2} = F_{n+1}$ constructed in Proposition \ref{solve poisson equation}.
	\begin{align*}
		F_{n+1} &=F_n -\frac{ni\pp\bp \lambda z\wedge\sum_{k=1}^{n-1}{n-1 \choose k}(p^*(\left.\beta\right|_D)+i\pp\bp U_{n})^k\wedge\omega_{\mathcal{C}}^{n-k-1}}{\omega_{\mathcal{C}}^n}\\
        &\qquad\qquad\qquad\qquad-\frac{\sum_{k=2}^{n}{n \choose k}\left(i\pp\bp\lambda z\right)^k\wedge\left(\omega_\mathcal{C}+p^*(\left.\beta\right|_D)+i\pp\bp U_{n}\right)^{n-k}}{\omega_{\mathcal{C}}^n}.
	\end{align*}
	By Proposition \ref{solve poisson equation}, we know that $|z^\frac{k}{2}\nabla^k F_{n+1}(z,\cdot)|_{\omega_\Ca}\leqslant C_Kz^{-n-1+\epsilon}$, which is of the same order of $F_{n}$. Then 
	\begin{align*}
		F_{n+2}&=-\frac{ni\pp\bp u_{n+2}\wedge\sum_{k=1}^{n-1}{n-1 \choose k}(p^*(\beta|_D)+i\pp\bp U_{n+1})^k\wedge\omega_{\mathcal{C}}^{n-k-1}}{\omega_{\mathcal{C}}^n}\\
        &\qquad\qquad\qquad-\frac{\sum_{k=2}^{n}{n \choose k}\left(i\pp\bp u_{n+2}\right)^k\wedge\left(\omega_\mathcal{C}+p^*(\beta|_D)+i\pp\bp U_{n+1}\right)^{n-k}}{\omega_{\mathcal{C}}^n}.
	\end{align*} 
	We know from the estimate in Proposition \ref{solve poisson equation} that $\left|z^\frac{k}{2}\nabla^k u_{n+2}\right|\leqslant Cz^{-n+\epsilon}$. So $F_{n+2}\leqslant C z^{-n-2+\epsilon}$. 
 
    Let $U = \tilde{U}+ \chi_2(\Phi^{-1})^*u_{n+2}$, we can choose $K'$ large enough such that $\beta+i\pp\bp U$ is K\"ahler on $X$. Also we see from the construction of $U$ that 
    \begin{align}\label{equivalence of metric}
        \left|\,\omega_\mathcal{C}-\Phi^*(\beta+i\pp\bp U)\right|_{\omega_\Ca} = \left|\Phi^*(\beta)\right|_{\omega_\Ca}+\sum_{k=1}^{n+2}\left|i\pp\bp u_k\right|_{\omega_\Ca}\leqslant C z^{-1}.
    \end{align}
    Fix a point $p\in X$. Let $r(x)$ denote the distance function to $p$ with the metric $\beta+i\pp\bp U$. With this asymptotic behavior, we know that $r(x)$ is in the same order of the distance function on $\Ca$.
    Then outside a compact set on $X$ we have the estimate that 
    \begin{align*}
        &\left|1-\frac{(\beta+i\pp\bp U)^n}{\Omega_X\wedge\ols\Omega_X}\right| = (\Phi^{-1})^*\left(\left|1-\frac{(p^*(\left.\beta\right|_D)+ \omega_\Ca +i\pp\bp U_{n+2})^n}{\omega_\Ca^n}+O(e^{-\delta z^n})\right|\right)\\
        =&(\Phi^{-1})^*\left(|F_{n+2}|+O(e^{-\delta z^n})\right)\leqslant C(\Phi^{-1})^*\left(z^{-n-2+\epsilon}\right)\leqslant C r^{-2-\epsilon}.
    \end{align*}
    For the $\Omega_X$-compatible condition, we notice that the small term $u_{n+2}$ does not affect the integration of the form $\beta+i\pp\bp U$:
	\begin{align*}
	&\int_X (\beta+i\pp\bp U)^n-\Omega_X\wedge\ols\Omega_X = \int_X (\beta+i\pp\bp U)^n-(\beta+i\pp\bp \tilde{U})^n \\ 
	&=\lim_{\varepsilon\to 0}\int_{\{(\Phi^{-1})^*t\leqslant -\log \varepsilon\}} i\pp\bp(\rho_{A_3}((\Phi^{-1})^*u_{n+2}))\wedge \left(\sum_{k=0}^{n-1}{n\choose k}(\beta+i\pp\bp \tilde{U})^k\wedge i\pp\bp\left(\rho_{A_2}((\Phi^{-1})^*u_{n+2})\right)^{n-k-1}\right)\\
	& =\lim_{\varepsilon\to 0}\int_{\{t = -\log \varepsilon\}}d^c u_{n+2} \wedge \frac{1}{nz^{n-1}}\left(\sum_{k=0}^{n-1}{n\choose k}(\beta+\omega_\mathcal{C}+i\pp\bp U_{n+1})^k\wedge (i\pp\bp u_{n+2})^{n-k-1} \right)=0.
	\end{align*}
	So $\beta+i\pp\bp U$ is a K\"ahler form satisfying both $\Omega_X$-compatible condition and decay condition in Tian-Yau-Hein's package.
\end{proof}

\section{Existence and the proof}\label{behavior of beta and main theorem}

Now we are ready to apply Tian-Yau-Hein's package to deform our metric $\beta+i\pp\bp U$ to a Calabi-Yau metric.
\begin{theorem}
	 For any class $\mathfrak{k}$ in $H_{+}^2(X)$, there exists a Calabi-Yau metric $\omega$ in the class $\mathfrak{k}$.
\end{theorem}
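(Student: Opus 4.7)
The plan is to apply Tian-Yau-Hein's package (Theorem \ref{Hein's Package}) to the background K\"ahler form $\omega_0 := \beta + i\pp\bp U$ constructed in Proposition \ref{final potential}. The output will be a smooth function $u$ on $X$ solving $(\omega_0 + i\pp\bp u)^n = \Omega_X \wedge \ols\Omega_X$, so that $\omega := \omega_0 + i\pp\bp u$ is Ricci-flat, K\"ahler, and, differing from $\beta$ by an exact $(1,1)$-form, lies in the class $\mathfrak{k}$.

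To invoke the package I would first verify its hypotheses for $(X,\omega_0)$. On the compact part of $X$ there is nothing to check since $\omega_0$ is a smooth K\"ahler metric. On the end, the estimate $|\omega_\Ca - \Phi^*\omega_0|_{\omega_\Ca} \leqslant C z^{-1}$ from Proposition \ref{final potential} (together with the higher-derivative closeness that is implicit in the iterative construction of $U$, via Proposition \ref{solve poisson equation}) shows that $\omega_0$ is uniformly quasi-isometric to $\Phi_* \omega_\Ca$. Completeness of $(X,\omega_0)$ and the $\mathrm{SOB}(\tfrac{2n}{n+1})$ condition therefore transfer from Proposition \ref{model space SOB}. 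For the $\mathrm{HMG}(0,3,\alpha)$ condition, I would use the fact that the curvature of $\omega_\Ca$ (hence of $\omega_0$) decays like $r^{-2/(n+1)} \to 0$ on the end; applying Lemma \ref{lemma of C^k,alpha} with $\lambda = 0$ on the region where $r \geqslant R_0$ (for $R_0$ large, so that $|\mathrm{Rm}| + \sum_{i=1}^{3} |\nabla^i \mathrm{Scal}| \leqslant 1$ there) produces unit-scale holomorphic charts with nearly-Euclidean pulled-back metrics, which together with the compact part gives $\mathrm{HMG}(0,3,\alpha)$.

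Next, I would set the Ricci potential $f := \log\!\left(\tfrac{\Omega_X \wedge \ols\Omega_X}{\omega_0^n}\right)$. From the estimate $|1 - \omega_0^n/(\Omega_X \wedge \ols\Omega_X)| \leqslant C r^{-2-\epsilon}$ in Proposition \ref{final potential} we immediately get $|f| \leqslant C r^{-2-\epsilon}$, supplying the required decay with $\mu = 2+\epsilon > 2$; smoothness of $f$ is clear. The integral condition $\int_X (e^f - 1)\omega_0^n = \int_X \Omega_X\wedge\ols\Omega_X - \omega_0^n = 0$ is precisely the $\Omega_X$-compatibility established in Proposition \ref{final potential}. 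Theorem \ref{Hein's Package} then produces $u \in C^{4,\bar\alpha}(X)$ solving $(\omega_0 + i\pp\bp u)^n = e^f \omega_0^n = \Omega_X \wedge \ols\Omega_X$; standard elliptic bootstrapping with $f \in C^\infty$ upgrades $u$ to a smooth function and yields the desired Calabi-Yau metric $\omega$ in the class $\mathfrak{k}$.

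The substantive work for this theorem has already been carried out in the preceding sections; the only non-bookkeeping point I anticipate is the bridge from $\mathrm{HMG}(\tfrac{1}{n+1},k,\alpha)$ on $(\Ca,\omega_\Ca)$ to $\mathrm{HMG}(0,3,\alpha)$ on $(X,\omega_0)$, but as noted this is handled by the fact that the curvature decays to zero at infinity.
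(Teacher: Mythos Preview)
Your proposal is correct and follows essentially the same approach as the paper: both construct $\omega_0=\beta+i\pp\bp U$ via Proposition \ref{final potential}, set $f=\log\bigl(\Omega_X\wedge\ols\Omega_X/\omega_0^n\bigr)$, verify the decay $|f|\leqslant Cr^{-2-\epsilon}$ and the integral condition from $\Omega_X$-compatibility, check $\mathrm{SOB}(\tfrac{2n}{n+1})$ and the $\mathrm{HMG}$ hypothesis using the closeness of $\omega_0$ to $\omega_\Ca$ and Lemma \ref{lemma of C^k,alpha}, and then apply Theorem \ref{Hein's Package}. Your remark that one must bridge from $\mathrm{HMG}(\tfrac{1}{n+1},k,\alpha)$ to the $\mathrm{HMG}(0,3,\alpha)$ actually required by the package, via the curvature decay and Lemma \ref{lemma of C^k,alpha} with $\lambda=0$ on the complete manifold $(X,\omega_0)$, is a point the paper leaves implicit.
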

\begin{proof}	
Let $\beta$ be the good representative we chose in $U$ be the potential constructed in section \ref{integral condition} with the form $\beta$. We know that $\beta+i\pp\bp U$ is a K\"ahler metric on $X$ such that 
\begin{align*}
    F(\beta+i\pp\bp U) := 1-\frac{(\beta+i\pp\bp U)^n}{\Omega_X\wedge\ols\Omega_X} 
\end{align*}
decays in order $r^{-2-\epsilon}$ and $\int_X (\beta+i\pp\bp U)^n $, here $r$ is any distance function under the metric $\beta+i\pp\bp U$. Let 
\begin{align*}
    f=\log{\frac{\Omega_X\wedge\ols\Omega_X}{(\beta+i\pp\bp U)^n}}.
\end{align*}
We have $f$ satisfies integral condition $\int_X (e^f-1)(\beta+i\pp\bp U)^n = 0$ and the decay condition $\left\vert f\right\vert\leqslant Cr^{-2-\epsilon}$.

On the other hand, we have higher regularity estimate of $u_i$'s:
\begin{align*}
    |z^{\frac{k}{2}}\nabla^k(\Phi^*(\beta+i\pp\bp U)-\omega_\mathcal{C})|_{\omega_\Ca} = |z^{\frac{k}{2}}\nabla^k\Phi^*(\beta)+\sum_{j=1}^{n+2}z^{\frac{k}{2}}\nabla^ki\pp\bp u_j|_{\omega_\Ca} \leqslant C z^{-1}
\end{align*} for any $z>C$ with some $C>0$. Then we have higher estimate of metric and scalar curvature. So $(X,\beta+i\pp\bp U)$ satisfies the $\mathrm{SOB}(\tfrac{2n}{n+1})$ condition by Lemma \ref{model space SOB} and $\mathrm{HMG}(\tfrac{1}{n+1},k,\alpha)$ by Lemma \ref{lemma of C^k,alpha} for any $k>0$ and $0<\alpha<1$.

So we know that there exists a function $\phi$ on $X$ such that 
\begin{align}\label{MA equation for phi}
	(\beta+i\pp\bp U +i\pp\bp \phi)^n = e^f(\beta+i\pp\bp U)^n= \Omega_X\wedge\ols\Omega_X,
\end{align} with $\phi \in C^{4}(X)$.
\end{proof}

The iteration process shows that for any $K>0$, there exists function $U_K$ and constant $C_K$ such that \begin{align*}
\left|f_K\right|:=|\,\log{\frac{\Omega_X\wedge\ols\Omega_X}{(\beta+i\pp\bp U_K)^n}}|\leqslant C_K r^{-K}.
\end{align*} If we choose $U$ such that the $\omega_\Ca$ potential $F(\beta+i\pp\bp U)$ decays fast enough, we can show that the solution $\phi$ of (\ref{MA equation for phi}) also decays fast to a constant. To do this, we first present the following local Poincar\'e lemma for $\mathrm{SOB}(\nu)$ manifold with $\nu \in (0, 2]$:
\begin{lemma}\label{local Poincare lemma}
 	Assume $(M, \omega_0)$ is a complete K\"ahler manifold satisfying $SOB(\nu)$ condition with $\nu \in (0, 2]$, and $r(x)^\kappa|B(x, 1)|\leqslant C$ as $r(x)\to \infty $ for some fixed $\kappa > 0$, $C > 0$. Let $u, f \in C^{\infty}(M)$ such that $\sup |\nabla^i u|+\sup |\nabla^i f|<\infty$ for all $i \in \mathbb{N}_0$, and $(\omega_0+i \partial \bar{\partial} u)^m=e^f \omega_0^m$. Then for any $\delta>0$, there exists $K_{\delta}>0$ such that if $\int_X |\nabla u|^2\omega^n<\infty$ and $|f|\leqslant C r^{-K_{\delta}}$, then $$\sup_{B(x,1)} |u-u_{B(x,1)}|\leqslant Cr(x)^{-\delta}$$  for any $x\in X$.
\end{lemma}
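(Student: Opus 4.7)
The plan is to reduce the complex Monge--Amp\`ere equation $(\omega_0 + i\pp\bp u)^m = e^f\omega_0^m$ to a linear divergence-form elliptic PDE and then bootstrap the decay of $|u - u_{B(x,1)}|$ on unit balls via a combination of local Moser iteration and a Caccioppoli-type shell estimate. Factoring
\[
(\omega_0 + i\pp\bp u)^m - \omega_0^m \;=\; i\pp\bp u \wedge \sum_{k=0}^{m-1}(\omega_0 + i\pp\bp u)^k \wedge \omega_0^{m-1-k},
\]
and dividing through by $\omega_0^m$, the equation rewrites as a linear divergence-form PDE $L_u u = e^f - 1$ whose coefficients are bounded in every $C^k$ norm and uniformly elliptic, thanks to the a priori bounds $\sup|\nabla^i u| < \infty$ and the K\"ahler assumption on $\omega_0 + i\pp\bp u$.

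Next I would pair this linear equation with the uniform local geometry supplied by $\mathrm{SOB}(\nu)$ and the $C^k$ bounds. By a rescaled Tian--Yau criterion (a variant of Lemma~\ref{lemma of C^k,alpha}), each unit ball $B(x,1)$ admits a harmonic chart with uniformly controlled geometry in which standard Moser iteration and Poincar\'e yield
\[
\sup_{B(x,1/2)} |u - u_{B(x,1)}| \;\leq\; C\left( \frac{\|\nabla u\|_{L^2(B(x,1))}}{|B(x,1)|^{1/2}} + \sup_{B(x,1)} |e^f - 1|\right),
\]
with $C$ independent of $x$. In parallel, testing $L_u u = e^f - 1$ against $\chi_R^2(u - c_R)$ for a Lipschitz cutoff $\chi_R$ supported on the shell $A_R := \{R < r < 4R\}$, equal to $1$ on $\{2R < r < 3R\}$, $|\nabla \chi_R|\leq C/R$, and a suitable averaging constant $c_R$, gives the Caccioppoli-type estimate
\[
\int_{\{2R < r < 3R\}} |\nabla u|^2\, \omega_0^n \;\leq\; \frac{C}{R^2}\int_{A_R} |u - c_R|^2\, \omega_0^n \;+\; C\int_{A_R} |u - c_R|\,|e^f - 1|\, \omega_0^n.
\]

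The proof then proceeds by bootstrap. Starting from the trivial bound $\sup_{A_R} |u - c_R| \leq CR$ (coming from $\sup|\nabla u| < \infty$) together with $\int_X |\nabla u|^2 < \infty$, an improvement in the pointwise decay of $|u - c_R|$ feeds into the Caccioppoli inequality to produce a sharper shell-energy bound; this improved energy bound, combined with the volume decay $|B(x,1)| \leq Cr^{-\kappa}$, feeds back through the Moser estimate to improve the pointwise decay. With $\nu \leq 2$ and $|e^f - 1| \leq Cr^{-K_\delta}$, each cycle gains a fixed increment of decay, so after finitely many iterations (whose number depends on $\delta$, $\nu$, and $\kappa$) the rate $r^{-\delta}$ is reached, provided $K_\delta$ is chosen large enough for the forcing term to always dominate the geometric error at each step.

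The principal difficulty lies in precisely this bootstrap in the regime $\nu \leq 2$ with collapsing unit balls: since no global Sobolev inequality is available, the finiteness of $\int_X |\nabla u|^2$ alone yields no polynomial decay, and one must thread the $L^2$-to-$L^\infty$ conversion on unit balls---where the unfavorable factor $|B(x,1)|^{-1/2} \gtrsim r^{\kappa/2}$ works against decay---through the shell Caccioppoli inequality in such a way that the decaying forcing $r^{-K_\delta}$ can absorb the geometric loss at each iteration. Tracking this arithmetic is what pins down the precise relation between $K_\delta$ and $\delta$.
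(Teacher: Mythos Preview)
The paper does not actually give a proof: it states that the argument is identical to Hein's thesis \cite[Proposition~4.8(ib)]{HeinThesis}, with the single modification that the sequence of radii is taken to be $r_i = i$ rather than Hein's original choice. So your ingredients---linearising the Monge--Amp\`ere equation in divergence form, Moser on unit balls, a Caccioppoli inequality on shells, and an iteration---are exactly the right ones and match Hein's scheme.

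The gap is in your bootstrap on dyadic shells. Tracing one cycle: from $\sup_{A_R}|u-c_R|\le M_R$ your Caccioppoli gives shell energy $\le C R^{-2}M_R^2\,|A_R| + (\text{forcing})\le C R^{\nu-2}M_R^2$; Moser then yields $\epsilon(R):=\sup_{B(x,1)}|u-u_{B(x,1)}|\le C R^{\kappa/2}R^{(\nu-2)/2}M_R$; and to return to a dyadic-shell oscillation you must chain $\sim R$ unit balls (RCA only gives paths of length $\sim R$), so $M_R^{\text{new}}\le C R\,\epsilon(R)\le C R^{(\kappa+\nu)/2}M_R$. Since $\kappa+\nu>0$ (in the Calabi model $\nu=\tfrac{2n}{n+1}$, $\kappa=\tfrac{n-1}{n+1}$), the cycle \emph{loses} rather than gains a fixed increment, and no choice of $K_\delta$ fixes this: the term $CR^{-2}M_R^2|A_R|$ is purely geometric and independent of $f$. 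Your sentence ``provided $K_\delta$ is chosen large enough for the forcing term to always dominate the geometric error'' has the roles reversed---large $K_\delta$ makes the forcing \emph{smaller}, leaving the geometric term in control.

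The paper's one-line modification $r_i=i$ is precisely what repairs this: working on unit-width annuli removes the factor $R^{-2}$ from the cutoff (since now $|\nabla\chi|\le C$), but in exchange the right-hand side of Caccioppoli sees only a width-$O(1)$ shell whose volume and oscillation are controlled by unit-scale data rather than by the full dyadic-shell oscillation $M_R$. This is the step you would need to rework.
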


\begin{remark}
	The proof is entirely same as the proof in \cite[Proposition 4.8(ib)]{HeinThesis}. The only difference is that we choose $r_i$ to be $i$. So we omit the proof here.
\end{remark}

Then we can improve the $C^0$ bound of our solution $\phi$ to get the optimal close rate of our weak asymptotically Calabi metric:

\begin{theorem}
	For any class $\mathfrak{k}$ in $H_{+}^2(X)$, there is a Calabi-Yau metric $\omega$ in $\mathfrak{k}$ which is weak asymptotically Calabi with rate $1$.
\end{theorem}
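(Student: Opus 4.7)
Since the Calabi--Yau metric $\omega = \beta + i\pp\bp U + i\pp\bp \phi$ in the class $\mathfrak{k}$ has already been obtained via Tian--Yau--Hein's package in the previous theorem, what remains is to verify the weak asymptotically Calabi with rate $1$ property for a judicious choice of background. The conditions on the complex structure and holomorphic volume form are automatic by Proposition \ref{exponential close} (they are in fact exponentially close), so the only goal is to show
\[
\bigl|\nabla^k_{\omega_\Ca}(\Phi^*\omega - \omega_\Ca)\bigr|_{\omega_\Ca} = O(z^{-1}) \text{ for every } k \in \mathbb{N}_0.
\]
From Proposition \ref{final potential} together with (\ref{equivalence of metric}) and the higher-order bounds from Proposition \ref{solve poisson equation}, the background already satisfies $|\nabla^k_{\omega_\Ca}(\Phi^*(\beta + i\pp\bp U) - \omega_\Ca)|_{\omega_\Ca} = O(z^{-1})$, the leading rate-one contribution coming from $i\pp\bp u_1$. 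Hence it suffices to show that the MA perturbation $i\pp\bp \phi$ contributes strictly less than $z^{-1}$ in all weighted derivatives.

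The plan is to exploit the freedom of background. Iterate the construction of Section \ref{integral condition} further to produce, for any prescribed $K \in \mathbb{N}$, a background potential $U_K$ such that $\beta + i\pp\bp U_K$ is still K\"ahler, $\Omega_X$-compatible, and $z^{-1}$-close to $\omega_\Ca$ in all derivatives (each additional $i\pp\bp u_j$ is strictly lower order by Proposition \ref{solve poisson equation}), while the Ricci potential $f_K := \log \tfrac{\Omega_X \wedge \ols\Omega_X}{(\beta + i\pp\bp U_K)^n}$ satisfies $|f_K| \leqslant C_K r^{-K}$. Applying Theorem \ref{Hein's Package} yields $\phi_K \in C^{\infty}_{\mathrm{loc}}$ with $\int_X |\nabla \phi_K|^2 \omega^n < \infty$ and $\omega = \beta + i\pp\bp U_K + i\pp\bp \phi_K$ Calabi--Yau. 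Because $(X, \beta + i\pp\bp U_K)$ satisfies $\mathrm{SOB}(\tfrac{2n}{n+1})$ with $\tfrac{2n}{n+1} < 2$, Lemma \ref{local Poincare lemma} applies: for any prescribed $\delta > 0$, taking $K \geqslant K_\delta$ yields
\[
\sup_{B(x,1)} \bigl|\phi_K - (\phi_K)_{B(x,1)}\bigr| \leqslant C \, r(x)^{-\delta}.
\]

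To upgrade this $C^0$ oscillation bound to weighted Hessian and higher-derivative estimates, I pass to the $\mathrm{HMG}(\tfrac{1}{n+1}, k, \alpha)$ charts $\Phi_x : B^n(0,1) \to X$ with rescaled metric $h = r(x)^{-2/(n+1)} \Phi_x^* g$. The $z^{-1}$-closeness of $\beta + i\pp\bp U_K$ to $\omega_\Ca$ ensures that, in these charts, the complex MA equation $(\beta + i\pp\bp U_K + i\pp\bp \phi_K)^n = e^{f_K}(\beta + i\pp\bp U_K)^n$ is uniformly elliptic with uniformly $C^{k,\alpha}$-bounded coefficients, and with RHS of size $O(r(x)^{-K})$. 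Evans--Krylov converts the $C^0$ oscillation bound into a uniform $C^{2,\alpha}$ bound, and iterated linear Schauder (Proposition \ref{uniform schauder estimates}) bootstraps to $\|\phi_K\|_{C^{k,\alpha}(B_h(0, 1/2))} \leqslant C_k r(x)^{-\delta}$. Rescaling back gives
\[
\bigl|\nabla^m_{\omega_\Ca} \phi_K\bigr|_{\omega_\Ca} \leqslant C_m \, r(x)^{-\delta} z^{-m/2},
\]
so in particular $|\nabla^k_{\omega_\Ca}(i\pp\bp \phi_K)|_{\omega_\Ca} = O(r^{-\delta} z^{-1 - k/2})$, which is strictly stronger than $O(z^{-1})$ for every $k$. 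Combining with the background estimate finishes the proof.

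The main obstacle is the nonlinear $C^0 \to C^{2,\alpha}$ step, which requires applying Evans--Krylov to the scaled complex MA equation and verifying uniform ellipticity across scales; both follow cleanly from the established $\mathrm{HMG}(\tfrac{1}{n+1})$ structure together with the $z^{-1}$-closeness of the background to $\omega_\Ca$. Once this is set up, the linear Schauder bootstrap to higher derivatives is routine, and the freedom to take $K$ (hence $\delta$) as large as we wish absorbs any polynomial losses in the iteration.
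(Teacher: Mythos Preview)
Your proposal is essentially correct and follows the same overall strategy as the paper: iterate the background to push the Ricci potential to $O(r^{-K})$, apply Lemma~\ref{local Poincare lemma} to get decay of the MA perturbation $\phi_K$, then bootstrap via local Schauder in the HMG charts. Two small points of divergence are worth flagging. First, the paper does not pass directly from the unit-ball oscillation bound to Evans--Krylov; it invokes \cite[Proposition 4.8(ii)]{HeinThesis} to upgrade the oscillation to a genuine global decay $|\phi_K-\bar\phi|\leqslant Cr^{-\delta+n/(n+1)}$ for some constant $\bar\phi$, then subtracts $\bar\phi$ and runs the rescaled Schauder estimate. Your route via Evans--Krylov with the local oscillation is legitimate, but you should be explicit about the scale mismatch (the oscillation is on $g$-unit balls while the HMG chart has $g$-radius $\sim z^{1/2}$), and note that the polynomial loss this causes is harmless because $\delta$ can be taken arbitrarily large. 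Second, the leading $z^{-1}$ contribution to $\Phi^*(\beta+i\pp\bp U_K)-\omega_\Ca$ comes from $p^*(\beta|_D)$ itself (since $|\omega_D|_{\omega_\Ca}\sim z^{-1}$), not from $i\pp\bp u_1$; the paper uses this, together with a primitivity choice for $\beta|_D$, to argue that when $\beta|_D\neq 0$ the rate is \emph{exactly} $z^{-1}$, establishing that the metric is genuinely weak asymptotically Calabi and not asymptotically Calabi.
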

\begin{proof}
With Lemma \ref{local Poincare lemma}, together with \cite[Proposition 4.8(ii)]{HeinThesis}, we know that if we choose $K$ large enough, there exists a constant $\bar{\phi}$ and $C$ such that the solution $\phi$ satisfies that
\begin{align*}
	|\phi-\bar{\phi}|\leqslant Cr^{-\delta+\frac{n}{n+1}}, \text{ for any } x \text{ such that } r(x) >C.
\end{align*}
Then we can replace $\phi$ by $\phi-\bar{\phi}$ to get a better candidate for the solution of (\ref{MA equation for phi}), so $\phi$ could be chosen to decay at any polynomial rate. Repeat our local rescaling and local Schauder estimate, we know that the Calabi-Yau metric $\beta+i\pp\bp U_K+i\pp\bp \phi$ is polynomially closed to the Calabi model space with the leading error term $\beta+i\pp\bp U_K$. 

If $\beta|_D =0$, the error term is exponentially close to Calabi model space. If $\beta|_D$ is nonzero, the decay rate of $\beta$ is exactly $r^{-\frac{2}{n+1}}$. If we choose $\beta$ such that $\beta|_D$ is primitive with respect to $\omega_D$, the decay of $i\pp\bp U_K$ would be $r^{-\frac{4}{n+1}+\epsilon}$, which is strictly lower order term compared with $\beta$. Thus, the Calabi-Yau metric $\beta+i\pp\bp (U+\phi)$ decays exactly at the rate $r^{-\frac{2}{n+1}}$, which is equivalent to $z^{-1}$.
\end{proof}

\section{Uniqueness}\label{Uniqueness}

In this section, we prove that the Calabi-Yau metric asymptotic to $\omega_\Ca$ in the class $\mathfrak{k}$ is unique.

\begin{theorem}\label{unique theorem 2}
	Let $(M,D)$ be the pair we considered before. If we have another Calabi-Yau metric $\tilde{\omega}$ in the same class $\mathfrak{k}$ satisfying $\left|\,\tilde{\omega}-\omega\right|_{\omega}\leqslant r^{-\kappa}$, when $r\to \infty$, for some distance function $r$ with respect to $\omega$ and some $\kappa>0$, then $\tilde{\omega} = \omega$.	
\end{theorem}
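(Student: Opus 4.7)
The plan is to reduce the uniqueness problem to a K\"ahler potential equation and then conclude by integration by parts, once sufficient decay at infinity is established. First, since $\tilde\omega-\omega$ is a $d$-exact $(1,1)$-form of polynomial decay on the complete K\"ahler manifold $(X,\omega)$, I would establish a global $i\pp\bp$-lemma on $X$ via H\"ormander's weighted $L^2$ estimate: starting from a de~Rham primitive of $\tilde\omega-\omega$, one refines it through the K\"ahler identities and the weighted $L^2$ theory of $\bp$ on $(X,\omega)$ to produce $\phi\in C^\infty(X)$ with $\tilde\omega-\omega=i\pp\bp\phi$, together with a quantitative growth estimate of the form $|\phi|\leqslant C(1+r)^{2-\kappa}$. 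The essential inputs are the $\mathrm{SOB}(\tfrac{2n}{n+1})$ and $\mathrm{HMG}(\tfrac{1}{n+1},k,\alpha)$ properties of $(X,\omega)$, which yield the weighted Sobolev inequality and the controlled geometry needed to run the $L^2$-theory in weighted spaces on the end.

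Next, since $\omega$ and $\tilde\omega$ are both Ricci-flat with the same canonical form $\Omega_X$, expanding $\omega^n=\tilde\omega^n$ yields
\begin{equation*}
i\pp\bp\phi\wedge\Theta=0,\qquad \Theta:=\sum_{k=0}^{n-1}\omega^k\wedge\tilde\omega^{n-1-k},
\end{equation*}
a linear elliptic equation for $\phi$ whose $(n-1,n-1)$-coefficient $\Theta$ is strictly positive; pulling back to the model end via $\Phi$, this reads $\Delta_{\omega_\Ca}\phi=O(r^{-\kappa})$ up to lower order perturbations. The crucial step is to upgrade the rough growth bound from the $i\pp\bp$-lemma to genuine decay of $\phi$ at infinity. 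I would do this by expanding $\Phi^*\phi$ in the fiber-wise Fourier basis of Section~\ref{Poisson equation} and exploiting a Liouville-type rigidity on $\Ca$: the homogeneous version of each ODE~(\ref{ODE}) shows that harmonic functions on $\Ca$ with at most linear growth in $z$ must take the form $a+bz$ on the zero-frequency mode, while all other Fourier components grow or decay exponentially in $z^{n/2}$. Iteratively applying Proposition~\ref{solve poisson equation} to invert $\Delta_{\omega_\Ca}$ and peeling off the finitely many slowly-growing harmonic corrections --- whose presence is precluded by the $L^2$-type growth constraints inherited from Step~1 --- one improves the asymptotics of $\phi$ to $|\phi-c|\leqslant Cr^{-\kappa'}$ for some $\kappa'>0$. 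After replacing $\phi$ by $\phi-c$, we may assume $\phi\to 0$ at infinity with a definite polynomial rate.

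With $\phi$ bounded and decaying at infinity, one tests the equation against $\phi$ itself using cutoffs $\chi_R$ with $\chi_R\equiv 1$ on $\{r\leqslant R\}$ and $|\nabla\chi_R|\leqslant C/R$; integration by parts gives
\begin{equation*}
\int_X\chi_R^2\,i\pp\phi\wedge\bp\phi\wedge\Theta=-2\int_X\chi_R\,\phi\,i\pp\chi_R\wedge\bp\phi\wedge\Theta.
\end{equation*}
Cauchy-Schwarz, the $C^0$-decay of $\phi$, and the polynomial control of $\Theta$ against the volume form of $\omega$ drive the right-hand side to zero as $R\to\infty$; positivity of $\Theta$ then forces $\pp\phi\equiv 0$, so $\phi$ is constant and $\tilde\omega=\omega$. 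The principal obstacle in the whole scheme is the decay improvement in the second paragraph: because the hypothesis permits an arbitrarily small rate $\kappa$, the $L^2$-based $i\pp\bp$-lemma by itself only produces a possibly growing $\phi$, and the full Liouville/iteration machinery on $\Ca$ is required to strip away each slowly-growing harmonic mode before the integration by parts in the last paragraph becomes admissible.
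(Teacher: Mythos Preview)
Your overall architecture---$i\pp\bp$-lemma, Liouville-type structure on $\Ca$, then integration by parts---matches the paper's, but two of your intermediate claims are too optimistic and hide the real work.

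First, the weighted H\"ormander estimate does \emph{not} produce a potential with polynomial growth $|\phi|\leqslant C(1+r)^{2-\kappa}$. The only plurisubharmonic weights available on the end are of the form $\epsilon\,\mathfrak t$ (up to lower order corrections), so the $L^2$ output is $\int|\phi|^2e^{-\epsilon\mathfrak t}\omega^n<\infty$, and after local elliptic estimates this yields only the subexponential bound $|\phi|\leqslant C_\epsilon e^{\epsilon\mathfrak t}$ for every $\epsilon>0$; see the paper's Lemma~\ref{C0 bound of l}. Consequently your Liouville input ``harmonic with at most linear growth in $z$'' is not what you actually have---you need the stronger structure result of Sun--Zhang for harmonic functions on $\Ca$ bounded by $e^{\epsilon t}$, which decomposes $l-f$ as $\lambda z+g+O(e^{-\delta z})$ with $g$ an $S^1$-invariant harmonic piece. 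A separate argument (integrating $|g_{tt}|\leqslant C z^{1-n-\kappa}$ along the fibre) is then needed to show $g$ is polynomial and hence absent.

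Second, the surviving mode $\lambda z$ is \emph{not} ``precluded by the $L^2$-type growth constraints'': nothing in the weighted $L^2$ bound rules out a $z$-linear term. The paper eliminates it by a boundary computation using $\tilde\omega^n=\omega^n$: integrating $i\pp\bp l\wedge\sum_k(i\pp\bp l)^{k-1}\wedge\omega^{n-k}$ over $\{t\leqslant T\}$ and sending $T\to\infty$ isolates $\lambda\cdot\mathrm{Vol}(D)$, forcing $\lambda=0$. Only after this step do you get $|l|\leqslant Cz^\epsilon$ (not genuine decay $r^{-\kappa'}$ as you assert), together with $|d^cl|\leqslant Cz^{-\frac{n+1}{2}+\epsilon}$, and it is this combination---not decay of $\phi$ alone---that makes the boundary term in your final integration by parts vanish.
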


\begin{remark}
	We are also interested in the problem that how different choice of the diffeomorphism $\Phi$ will change our Calabi-Yau metric. For example, the scaling in the fiber direction will change the metric by the rate $r^{-\frac{2n}{n+1}}$ and by our uniqueness theorem, we get the same Calabi-Yau metric.
\end{remark}

The proof of the theorem can be sketched as follows. We start with a $\pp\bp$-lemma by solving $\bp$ equation via the $L^2$ method. Then we can write $\tilde{\omega} = \omega +i\pp\bp l$ with some estimate on $l$. By pulling back to $\mathcal{C}$, we construct $f$ on the model space to solve the Poisson equation $\Delta_\mathcal{\omega_\mathcal{C}} f = \Delta_\mathcal{\omega_\mathcal{C}}l$. Via the estimate of harmonic function on $\mathcal{C}$ in Sun-Zhang \cite{SZ}, we can use the equation $(\omega+i\pp\bp l)^n = \omega^n$ and take integration by parts to deduce that $i\pp\bp l = 0$.


\begin{lemma}\label{C0 bound of l}
    There exists a smooth function $l$ on $X$ such that $\tilde{\omega} = \omega +i\pp\bp l$ with $|\Phi ^*l|<Ce^{\epsilon t}$ on $\{t\geqslant C\}$ for any $\epsilon>0$ and some $C>0$.
\end{lemma}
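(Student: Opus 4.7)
The plan is to apply the $i\pp\bp$-lemma globally on $X$ via H\"ormander's weighted $L^2$ estimates, then upgrade the resulting weighted $L^2$ control to a pointwise growth bound using local elliptic regularity.

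Since both $\tilde{\omega}$ and $\omega$ represent $\mathfrak{k} \in H^2(X,\RR)$, the real $(1,1)$-form $\eta := \tilde{\omega} - \omega$ is $d$-exact, and the decay $|\eta|_\omega \leqslant Cr^{-\kappa}$ together with the $\mathrm{SOB}(\tfrac{2n}{n+1})$ geometry of $(X,\omega)$ places $\eta$ in an appropriately weighted $L^2$ space. By $L^2$-Hodge theory on $(X,\omega)$ (or by a direct Poincar\'e-type construction on the asymptotically Calabi end) we produce a primitive $1$-form $\alpha$ with $d\alpha = \eta$. Writing $\alpha = \alpha^{1,0} + \overline{\alpha^{1,0}}$ and matching Dolbeault types in $d\alpha = \eta$ yields $\pp\alpha^{1,0} = 0$, $\bp(\overline{\alpha^{1,0}}) = 0$, and $\bp\alpha^{1,0} + \pp(\overline{\alpha^{1,0}}) = \eta$.

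Next, since $\omega$ is Ricci-flat, apply H\"ormander's $L^2$ theorem on the complete K\"ahler manifold $(X,\omega)$ with a plurisubharmonic weight $\phi$ satisfying $i\pp\bp\phi \geqslant c\omega$ on the end: the Bochner--Kodaira--Nakano inequality produces a smooth solution $u$ of $\bp u = \overline{\alpha^{1,0}}$ with $\int_X |u|^2 e^{-\phi}\,\omega^n < \infty$. Setting $l := -2\,\mathrm{Im}(u)$ yields a real smooth function with $i\pp\bp l = \eta$, i.e. $\tilde{\omega} = \omega + i\pp\bp l$ globally on $X$.

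For the $C^0$ bound, take the trace of $i\pp\bp l = \eta$ to obtain the Poisson-type equation $\Delta_\omega l = \mathrm{tr}_\omega \eta$, whose right-hand side decays polynomially. Applying Proposition \ref{uniform schauder estimates} on unit balls $B_\omega(x,1)$ converts the weighted $L^2$ estimate into a pointwise bound whose growth is dictated by $e^{\phi/2}$. The main obstacle is the choice of $\phi$: strict plurisubharmonicity $i\pp\bp\phi \geqslant c\omega$ forces $\phi$ to grow at least like a K\"ahler potential of $\omega$, namely $\phi \sim t^{(n+1)/n}$, so the direct H\"ormander argument only yields the weaker bound $|l| \leqslant C e^{\epsilon t^{(n+1)/n}}$. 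To sharpen this to $|\Phi^* l| \leqslant C e^{\epsilon t}$ for every $\epsilon > 0$, I would bootstrap as follows: after producing $l$ globally, modify it by a pluriharmonic function (which does not affect $i\pp\bp l$) obtained by solving $\Delta_{\omega_\Ca} f = \mathrm{tr}_{\omega_\Ca}\Phi^*\eta$ on the Calabi model space by Proposition \ref{solve poisson equation} (which gives $f$ with explicit polynomial growth), and then analyze $\Phi^* l - f$, which is close to harmonic on $\Ca$, using the growth classification of harmonic functions on the model space from Sun--Zhang to conclude the desired sub-exponential bound $e^{\epsilon t}$.
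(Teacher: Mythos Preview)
Your outline shares the paper's strategy (produce a primitive for $\tilde\omega-\omega$, solve a $\bp$-equation by weighted $L^2$, then upgrade to $C^0$ by local elliptic estimates), but the step where you say ``strict plurisubharmonicity $i\pp\bp\phi\geqslant c\omega$ forces $\phi\sim t^{(n+1)/n}$'' is precisely where you go wrong, and the subsequent bootstrap does not repair it.

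H\"ormander's estimate does not require a uniform lower bound $i\pp\bp\phi\geqslant c\omega$; it only needs $i\pp\bp\phi>0$, with the inverse curvature appearing on the right-hand side. The paper exploits this: it works on $X\setminus E$ (removing the non-ample locus so that $\mathfrak t=-\log|S|^2_{h_M}$ is strictly plurisubharmonic) and takes the weight $\tau=\epsilon\,\rho_{B_1}(\mathfrak t)-\delta\,\rho_{B_2}(\mathfrak t)^{1/n}$, which satisfies $i\pp\bp\tau\geqslant C_{\epsilon,\delta}\,\mathfrak t^{-1}\omega$ outside a compact set. The $L^2$ estimate then reads
\[
\int_{X\setminus E}|\iota|^2 e^{-\tau}\,\omega^n\;\leqslant\;\int_{X\setminus E}\mathfrak t\,|\sigma^{0,1}|^2 e^{-\tau}\,\omega^n,
\]
and since $\sigma$ is constructed explicitly (by integrating $\tilde\omega-\omega$ along the $\RR$-factor of $\Ca\cong Y\times\RR_{>0}$ and patching via the $1$-convex vanishing theorem) with merely polynomial growth, the right side is finite with $\tau$ growing only linearly in $t$. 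This gives $|l|\leqslant C_\epsilon e^{\epsilon t}$ directly after the local elliptic step.

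Your proposed bootstrap fails on two counts. First, the function $f$ you obtain from Proposition~\ref{solve poisson equation} satisfies a Poisson equation, so it is not pluriharmonic and cannot be subtracted from $l$ without changing $i\pp\bp l$. Second, even if you instead analyze $\Phi^*l-f$ (which is approximately $\omega_\Ca$-harmonic), the Sun--Zhang classification you invoke takes a bound of the form $e^{\epsilon t}$ as \emph{hypothesis}, not conclusion: with only $|\Phi^*l-f|\leqslant Ce^{\epsilon t^{(n+1)/n}}$ you cannot rule out the nonzero $S^1$-modes, whose fundamental solutions grow like $e^{jt/2}$. So the argument is circular.
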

\begin{proof}
    We prove the lemma by several steps:
    
    \textbf{Step 1:} We show that there exists a smooth $1$-form $\sigma$ on $X$ such that $\tilde{\omega} - \omega = d\sigma$ with $\left|\sigma\right|_{\omega}\leqslant Cz^{n+\frac{1}{2}-\kappa}$. 

     After pulling back to the model space $\Ca$ we have $\Phi^* (\tilde{\omega} - \omega)$ is a closed 2-form with $|\Phi^*(\tilde{\omega} - \omega)|_{\omega_\Ca}\leqslant Cz^{-\kappa}$. By viewing $\Ca$ as $Y\times (0,+\infty)$, we can write it as 
    \begin{align*}
        \Phi^* (\tilde{\omega} - \omega) = \eta + dz\wedge \gamma
    \end{align*} with $\pp_z\righthalfcup \eta = 0$, $\pp_z\righthalfcup \gamma = 0$. Then the fact that $d(\eta + dz\wedge \gamma) = 0$ implies $d_Y\eta = 0$, $\pp_z \eta = d_Y\gamma$.
    So we can choose \begin{align*}
    \tilde\sigma = \int_1^z \gamma dz\end{align*} such that \begin{align*}d\tilde\sigma =\eta+dz\wedge\gamma = \Phi^*(\tilde\omega-\omega).\end{align*} 
    Since $|\Phi^*(\tilde{\omega} - \omega)|_{\omega_\Ca}\leqslant Cz^{-\kappa}$, we have the decay of $dz\wedge\gamma$ which implies that \begin{align*}|\gamma|_{\omega_\Ca} \leqslant Cz^{\frac{n-1}{2}-\kappa}.\end{align*}
    Given the formula of $\omega_\Ca$ we can have an estimate of $|\tilde\sigma|_{\omega_\Ca}$ at the point $(y,z_0)\in \Ca$:
    \begin{align*}
        |\tilde\sigma(y,z_0)|_{\omega_\Ca(y,z_0)} &\leqslant \int_1^{z_0} |\gamma(y,z)|_{\omega_\Ca(y,z_0)} dz\leqslant \int_1^{z_0} |\gamma(y,z)|_{\omega_\Ca(y,z)} z_0^{\frac{n-1}{2}} z^{\frac{1}{2}} dz \leqslant C z_0^{n+\frac{1}{2}-\kappa}.
    \end{align*}
    After extending $(\Phi^{-1})^*\tilde\sigma$ as a smooth 1-form on $X$, we can write $\tilde\omega -\omega = d((\Phi^{-1})^*\tilde\sigma) + \theta$ for some smooth compact supported closed 2-form $\theta$ on X. 
    
    Recall that $X$ is 1-convex. Then by the vanishing theorem for 1-convex manifold from Van Coevering \cite{van2008construction} Proposition 4.2., $\theta = i\pp\bp s = dd^cs$ for some compact supported function $s$ on $X$. Then $\sigma = (\Phi^{-1})^*\tilde\sigma + d^cs$ is the smooth 1-form that we are looking for.
 
\textbf{Step 2:} Recall that $E$ is the non-ample locus of $-K_M$. The $X\setminus E$ admits a complete K\"ahler metric by Proposition 4.1 in Ohsawa \cite{Ohsawa1984VanishingTO}. So we can use $L^2$-estimate on $X\setminus E$ to solve the $\bp$ equation to construct the potential $l$ such that $\tilde\omega -\omega  = i\pp\bp l$.

    Let $\tau= \epsilon\cdot\rho_{B_1}(\mathfrak{t})-\delta\cdot\rho_{B_2}(\mathfrak{t})^{\frac{1}{n}}$. Choose $z_0$, $B_1$ and $B_2$ large, then choose $\delta$ small depending on $\epsilon$, we can guarantee that the $(1,1)$ form \begin{align*}
        i\pp\bp \tau = i\pp\bp (\epsilon\cdot\rho_{B_1}(\mathfrak{t})-\delta\cdot\rho_{B_2}(\mathfrak{t})^{\frac{1}{n}})
    \end{align*} is a K\"ahler form on $X\setminus E$. We have $i\pp\bp \tau \geqslant C_{\epsilon,\delta} \mathfrak{t}^{-1} \omega$ outside a compact set.
    
    If we take the type decomposition of $\left(\Phi^{-1}\right)^*\tilde\sigma = \left(\left(\Phi^{-1}\right)^*\tilde\sigma\right)^{1,0}+\left(\left(\Phi^{-1}\right)^*\tilde\sigma\right)^{0,1}$, we have the estimate of $\left(\left(\Phi^{-1}\right)^*\tilde\sigma\right)^{0,1}$ that $\left|\left(\left(\Phi^{-1}\right)^*\tilde\sigma\right)^{0,1}\right|_\omega \leqslant C \mathfrak{t}^{1+\frac{1-2\kappa}{2n}}$ for $\mathfrak{t}>C$ and $\left(\Phi^{-1}\right)^*\tilde\sigma$ supported on $\mathfrak{t}>C$.
    
    So with the same weighted $L^2$ estimate in Hein-Sun-Viaclovsky-Zhang \cite[Proposition 2.2.]{hsvz2}, we have 
    \begin{align*}
        \int_{X\setminus E} \mathfrak{t}\cdot\left|(\Phi^{-1})^*\tilde\sigma^{0,1}\right|_{\omega} e^{-\tau} \omega^n\leqslant C\int_{X\setminus E} \mathfrak{t}\cdot\mathfrak{t}^{\frac{2n+1-2\kappa}{2n}} e^{-\epsilon\mathfrak{t}+\delta\mathfrak{t}^\frac{1}{n}} \omega^n < \infty
    \end{align*}
    which yields that we have a solution $\iota$ such that $\bp \iota = \sigma^{0,1}$ with
    \begin{align*}
        \int_{X\setminus E} |\iota|^2 e^{-\tau}\omega^n\leqslant\int_{X\setminus E} \mathfrak{t}\cdot|(\Phi^{-1})^*\tilde\sigma^{0,1}|_{\omega} e^{-\tau} \omega^n.
    \end{align*}
    Consequently, we have $i\pp\bp(2Im\iota)  = d(\Phi^{-1})^*\tilde\sigma$. Set $l = 2Im\iota+s$ then we have $\tilde\omega-\omega =  i\pp\bp l$.
    
\textbf{Step 3:} We give the $C^0$ bound and $C^k$ bound for $l$ via elliptic estimates on the scaled metric.

    Let $x$ be any point in $X\setminus K$. With the same local elliptic estimate under the scaled metric $\hat\omega = \mathfrak{t}(x)^{-\frac{1}{n}}\omega$ as in Proposition \ref{solve poisson equation}, we can give a global $C^0$ bound of $l$. We know that $l$ satisfies the elliptic equation $(\omega+i\pp\bp l)^n = \omega^n$ 
    with 
    \begin{align*}
        \int_{B_{\hat\omega}(x,1)}|l|^2 \omega^n \leqslant e^{\epsilon \cdot C\mathfrak{t}(x)}\int_{X\setminus E} |l|^2 e^{-\epsilon\cdot\rho_{B_1}(\mathfrak{t})+\delta\cdot\rho_{B_2}(\mathfrak{t})^{\frac{1}{n}}}\omega^n\leqslant C_\epsilon e^{\epsilon \cdot C\mathfrak{t}(x)},
    \end{align*} 
    since we have some uniform constant $C$ such that $\mathfrak{t}(y)\leqslant C\cdot \mathfrak{t}(x)$ for any $y \in B_{\hat\omega}(x,1)$ and any $x\in X\setminus K$. By adjusting $\epsilon$ small enough we have
    \begin{align*}
        \|l\|_{L^2(B_{\hat\omega}(x,1))}\leqslant C_\epsilon e^{\epsilon \mathfrak{t}(x)}.
    \end{align*}
    
    Now we can do local elliptic estimates on the scaled metric after lifting to the universal cover. 
    Since the $S^1$ direction on $\Ca$ collapsing in polynomial order with respect to $z$, we know that 
    \begin{align*}
        \|l\|_{L^2(\widetilde{B}_{\hat\omega}(x,1))}\leqslant C\cdot\mathfrak{t}(x)^{\frac{n-1}{2n}}\cdot\|l\|_{L^2(B_{\hat\omega}(x,1))}\leqslant C_\epsilon e^{\epsilon \mathfrak{t}(x)}.
    \end{align*}We have the global $C^0$ bound for $l$:
    \begin{align*}
        |l|(x)\leqslant\|l\|_{W^{2,2}(\widetilde{B}_{\hat\omega}(x,1))}\leqslant C\cdot\|l\|_{L^2(\widetilde{B}_{\hat\omega}(x,1))}\leqslant C_\epsilon e^{\epsilon \mathfrak{t}(x)}
    \end{align*}
    for any $\epsilon>0$.
\end{proof}
\begin{remark}
In the proof of the $i\pp\bp$-lemma \ref{C0 bound of l} we did not use the polynomial decay of $\tilde\omega-\omega$. In fact, we can always find $l$ even when $|\tilde\omega-\omega|$ is polynomially growth.
\end{remark}
Furthermore, we can prove that $\tilde\omega-\omega$ has weighted higher regularity bound.
\begin{lemma}\label{higher regularity of dsigma}
    There exists a constant $C>0$ such that \begin{align*}
        \left|z^{\frac{k}{2}}\nabla^k (\tilde\omega-\omega)\right|_{\omega}\leqslant Cz^{-\kappa}
    \end{align*} for any $z>C$.
\end{lemma}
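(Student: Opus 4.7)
The plan is to combine the Monge-Amp\`ere equation provided by Lemma \ref{C0 bound of l} with local Schauder theory on the HMG$(\tfrac{1}{n+1})$ charts, rescaled to a unit ball. Expanding $(\omega + i\partial\bar\partial l)^n = \omega^n$ in the difference $T := \tilde\omega - \omega = i\partial\bar\partial l$ gives $\operatorname{tr}_\omega T = Q(T)$ with $Q$ a polynomial of degree at least two in the components of $T$, hence $|\Delta_\omega l| \leqslant C|T|^2_\omega \leqslant Cz^{-2\kappa}$.

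Fix $x_0 \in X$ with $z_0 := z(x_0)$ large, pick an HMG$(\tfrac{1}{n+1})$ chart $\Phi_{x_0}: B(0,1) \subset \mathbb{C}^n \to X$, and set $h := z_0^{-1}\Phi_{x_0}^*\omega$ and $\psi := z_0^{-1}\Phi_{x_0}^*l$. The rescaled equation is $\det(h + \psi_{i\bar j}) = \det h$ on $B(0,1)$ with $h$ uniformly $C^{k,\alpha}$-close to the Euclidean metric. A direct rescaling check (using $\nabla_h = \nabla_g$ for constant conformal rescaling) gives $|\psi_{i\bar j}|_h = |T|_\omega \leqslant \epsilon := Cz_0^{-\kappa}$, which is small for $z_0 \gg 1$, and more generally $|\nabla_h^k \psi_{i\bar j}|_h = z_0^{k/2}|\nabla_\omega^k T|_\omega$. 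The $C^0$ bound on $l$ from Lemma \ref{C0 bound of l} is too weak to use directly, but since $\psi_{i\bar j}$ is unchanged by adding a pluriharmonic function, a quantitative $\partial\bar\partial$-Poincar\'e lemma on the unit ball --- obtained by Poincar\'e-integrating the $d$-exact real (1,1)-form $i\partial\bar\partial\psi = d\beta$, writing $\beta^{1,0} = \partial f$ from $\partial\beta^{1,0}=0$, and extracting $-2\operatorname{Im} f$ --- produces a pluriharmonic $q$ with $\tilde\psi := \psi - q$ satisfying $\|\tilde\psi\|_{L^\infty(B(0,1/2))} \leqslant C\|\psi_{i\bar j}\|_{L^\infty(B(0,1))} \leqslant C\epsilon$ and the same equation $\Delta_h \tilde\psi = -Q_0(\tilde\psi_{i\bar j})$ with $|Q_0| \leqslant C|\tilde\psi_{i\bar j}|^2 \leqslant C\epsilon^2$.

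I would then iterate interior Schauder estimates in weighted H\"older norms to absorb the nonlinearity. Writing $[u]^*_{C^{k+1,\alpha}} := \sup_{r<1}(1-r)^{k+1+\alpha}[u]_{C^{k+1,\alpha}(B(0,r))}$, the Schauder estimate for the uniformly elliptic operator $\Delta_h$ gives $[\tilde\psi]^*_{C^{k+1,\alpha}} \leqslant C(\|\tilde\psi\|_{L^\infty(B(0,1))} + [Q_0]^*_{C^{k-1,\alpha}})$, and the product rule bounds $[Q_0]^*_{C^{k-1,\alpha}} \leqslant C\|\tilde\psi_{i\bar j}\|_{L^\infty}[\tilde\psi_{i\bar j}]^*_{C^{k-1,\alpha}} \leqslant C\epsilon [\tilde\psi]^*_{C^{k+1,\alpha}}$. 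For $z_0$ sufficiently large that $C\epsilon \leqslant \tfrac{1}{2}$, the nonlinear term is absorbed and one obtains $[\tilde\psi]_{C^{k+1,\alpha}(B(0,1/2))} \leqslant C_k\epsilon$ for each $k$; in particular $|\nabla_h^k \psi_{i\bar j}|_h = |\nabla_h^k \tilde\psi_{i\bar j}|_h \leqslant C_k\epsilon$. The rescaling identity above then yields $|z^{k/2}\nabla^k(\tilde\omega - \omega)|_\omega \leqslant Cz^{-\kappa}$ uniformly in $x_0$, as required.

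The main technical obstacle is the pluriharmonic normalization: one needs a \emph{uniform} $L^\infty$-to-$L^\infty$ local solvability of $i\partial\bar\partial u = \alpha$ on the rescaled unit ball, with constants independent of $x_0$. Once this is in place, the bootstrap closes cleanly because the MA nonlinearity $Q_0$ vanishes quadratically in $\tilde\psi_{i\bar j}$, which is precisely what makes the weighted Schauder absorption work.
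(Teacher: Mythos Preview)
Your proposal is correct and follows the same overall strategy as the paper: rescale to an HMG chart, construct a local potential for $\tilde\omega-\omega$ with $C^0$ norm of order $z_0^{-\kappa}$, and then run Schauder theory on the Monge--Amp\`ere equation $(h+i\partial\bar\partial\tilde\psi)^n=h^n$ to promote this to higher derivative bounds.

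The one substantive difference is in how the local potential is produced. The paper Poincar\'e-integrates $\tilde\omega-\omega$ to a $1$-form $\sigma$ and then invokes Kohn's $\bar\partial$-Neumann operator $N$ on the rescaled ball to solve $\bar\partial f=\sigma^{0,1}$ with $C^{0,\alpha}$ control, so that $\hat l = z_0^{-1}\cdot 2\operatorname{Im} f$ already comes with a H\"older bound before the Schauder bootstrap. You instead observe that $\partial\beta^{1,0}=0$ and solve $\partial f=\beta^{1,0}$ by a second elementary Poincar\'e integration in the holomorphic variables, which yields only an $L^\infty$ bound; you then recover the H\"older regularity through the weighted absorption $[Q_0]^*_{C^{k-1,\alpha}}\leqslant C\epsilon[\tilde\psi]^*_{C^{k+1,\alpha}}$, using that all norms are a priori finite by smoothness. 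Your route is more elementary (no Hodge-theoretic machinery) at the cost of making the absorption explicit; the paper's route front-loads the regularity via Kohn but leaves the final ``by Schauder estimates'' step to the reader. Both arguments close for the same reason: the quadratic vanishing of the nonlinearity $Q_0$ combined with the smallness $|\tilde\psi_{i\bar j}|_h\leqslant Cz_0^{-\kappa}$.
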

\begin{proof}
    Fix any point $x$ in $X$ with $\mathfrak{t}(x) = z_0^n$. We still work on the scaled metric $\hat\omega=z_0^{-1}\omega$ with uniform bounded curvature. The injectivity radius of the universal covering around $x$ is bounded below by a universal constant $\delta$ independent of $x$. 
    Now we are working on the ball $\tilde B(\tilde{x},\delta)$ in the universal cover. Since $\tilde\omega-\omega$ is $d$-exact, locally we can take integration of $\tilde\omega-\omega$ along the geodesic lines to have 1-form $\sigma$ on $\tilde B(\tilde{x},\delta)$ such that  
    \begin{align*}
        d\sigma =\tilde\omega-\omega,\qquad \|\sigma\|_{C^0_{\hat\omega}(\tilde B(\tilde{x},\delta))}\leqslant \|\tilde\omega-\omega\|_{C^0_{\hat\omega}(\tilde B(\tilde{x},\delta))}\leqslant C z_0^{1-\kappa}.
    \end{align*}
    Consider the type decomposition of $\sigma = \sigma^{0,1}+\sigma^{1,0}$. The operator $$N:L^2(\tilde B(\tilde{x},\delta),\Omega^{0,1})\to L^2(\tilde B(\tilde{x},\delta),\Omega^{0,1})$$
    constructed in \cite[Theorem 8.9]{kohn1963harmonic} satisfies that 
    \begin{align*}
        \Delta_{\bp}(N\sigma^{0,1})= \sigma^{0,1},\quad \|N\sigma^{0,1}\|_{L^2_{\hat\omega}(\tilde B(\tilde{x},\delta))}\leqslant C \|\sigma^{0,1}\|_{L^2_{\hat\omega}(\tilde B(\tilde{x},\delta))}
    \end{align*} and $N$ commutes with $\pp$ and $\bp$.
    Then
    \begin{align*}
        \|N\sigma^{0,1}\|_{W^{2,2}_{\hat\omega}(\tilde B(\tilde{x},\delta))}\leqslant C \|\sigma^{0,1}\|_{L^2_{\hat\omega}(\tilde B(\tilde{x},\delta))}\leqslant C \|\sigma^{0,1}\|_{C^0_{\hat\omega}(\tilde B(\tilde{x},\delta))}.
    \end{align*}
    Then we know by Sobolev lemma and iteration process that for any $q>1$
    \begin{align*}
        \|N\sigma^{0,1}\|_{W^{2,q}_{\hat\omega}(\tilde B(\tilde{x},\delta))}\leqslant C \|\sigma^{0,1}\|_{L^q_{\hat\omega}(\tilde B(\tilde{x},\delta))}\leqslant C \|\sigma^{0,1}\|_{C^0_{\hat\omega}(\tilde B(\tilde{x},\delta))}.
    \end{align*}
    Take $q>n$, there exists $\alpha>0$ such that
    \begin{align*}
       \|N\sigma^{0,1}\|_{C^{1,\alpha}_{\hat\omega}(\tilde B(\tilde{x},\delta))}\leqslant C\|N\sigma^{0,1}\|_{W^{2,q}_{\hat\omega}(\tilde B(\tilde{x},\delta))}\leqslant C \|\sigma^{0,1}\|_{C^0_{\hat\omega}(\tilde B(\tilde{x},\delta))}.
    \end{align*}
    Let $f= \bp^*N\sigma^{0,1}$. We have 
    \begin{align*}
       \|f\|_{C^{0,\alpha}_{\hat\omega}(\tilde B(\tilde{x},\delta))}\leqslant C \|\sigma^{0,1}\|_{C^0_{\hat\omega}(\tilde B(\tilde{x},\delta))},\qquad\bp f = \Delta_{\bp}(N\sigma^{0,1})=\sigma^{0,1}.
    \end{align*}
    Let $\hat{l} = z_0^{-1}\cdot 2\mathrm{Im}f$. We have $i\pp\bp \hat{l} = z_0^{-1}(\tilde\omega-\omega)$. Hence $(\hat\omega+i\pp\bp \hat{l})^n = \hat\omega^n $ with \begin{align*}
        \|\hat{l}\|_{C^{0,\alpha}_{\hat\omega}(\tilde B(\tilde{x},\delta))}\leqslant C z_0^{-\kappa}.
    \end{align*} By Schauder estimates we have higher regularity \begin{align*}
        \|\hat{l}\|_{C^{k,\alpha}_{\hat\omega}(\tilde B(\tilde{x},\delta))}\leqslant C z_0^{-\kappa} 
    \end{align*}
    which yields \begin{align*}
        \left|z_0^{\frac{k}{2}}\nabla^k (\tilde\omega-\omega)\right|_{\omega}\leqslant Cz_0^{-\kappa}
    \end{align*} for any $z_0>C$.
\end{proof}

Then we are ready to prove the uniqueness:
\begin{proof}[Proof of Theorem \ref{unique theorem 2}]
Given by previous estimate, we have $\tilde \omega-\omega = i\pp\bp l$ with $|l|\leqslant C_\epsilon e^{\epsilon \mathfrak{t}}$. If we pull back $l$ to $\mathcal{C}$, by the closeness of complex structure we have \begin{align*}
	|dJ_\Ca d l|_{\omega_\Ca} \leqslant |d(J_\Ca-J_X) d l|_{\omega_\Ca} + |dJ_X d l|_{\omega_\Ca}\leqslant C(e^{-(\frac{1}{2}-\epsilon)z^n}+z^{-\kappa}).
\end{align*}
The function $F_l = \Delta_{\omega_\Ca}l$ has higher regularity bound on $\Ca$:
\begin{align*}
	\left|z^{\frac{k}{2}}\nabla^k F_l\right|_{\omega_{\Ca}}\leqslant Cz^{-\kappa} \text{ for any } z>C. 
\end{align*}By Proposition \ref{solve poisson equation} there exists a smooth function
$f$ on $\mathcal{C}$ such that $\Delta_{\omega_\mathcal{C}}f = F_l$ with
\begin{align*}
    |i\pp\bp f|_{\omega_\mathcal{C}}\leqslant Cz^{-\kappa+\epsilon}, \quad|df|_{\omega_\Ca}\leqslant Cz^{\frac{n+1}{2}-\kappa+\epsilon}, \quad |f|\leqslant C z^{n+1-\kappa+\epsilon} \text{ for any }\epsilon>0.
\end{align*}
Since $\Delta_{\omega_\Ca}(l-f) =0$ and $|l-f|\leqslant e^{\epsilon t}$ for any $\epsilon>0$, from the behavior of harmonic function \cite[Proposition 5.3.]{SZ} we know that $l= f+ \lambda z+ g+ O(e^{-\delta z})$ for some $\lambda>0$ and some harmonic $S^1$-invariant function $g$ on $\Ca$ with $|g|\leqslant C e^{\delta z^{\frac{n}{2}}}$. Since $|z^{n-1}g_{tt}(t,q)|\leqslant |i\pp\bp g|_{\omega_\Ca} \leqslant C z^{-\kappa+\epsilon}$ holds uniformly for any $q\in D$, integration along the $\mathbb{R}$-fiber direction shows that $g$ and hence $l$ is at most polynomially growth. Again by \cite[Proposition 5.3.]{SZ} we know that $l = f + \lambda z + O(e^{-\delta z})$.

Recall that $l$ satisfies that $$\Delta_{\omega_{\Ca}} l = \Delta_{\omega_{\Ca}} l - \Delta_{\Phi^*\omega} l + \sum_{k = 2}^n {n\choose k}(d\Phi^*J_Xd l)^k\wedge\Phi^*\omega^{n-k},$$ by our previous construction we know that $|\Phi^*\omega -\omega_\Ca|_{\omega_\Ca}\leqslant Cz^{-1}$, so $|\Delta_{\omega_{\Ca}} l - \Delta_{\Phi^*\omega} l|_{\omega_\Ca}\leqslant Cz^{-1-\kappa}$,
 \begin{align}\label{uniqueness iteration}
 	\Delta_{\omega_{\Ca}} l \leqslant C\left(z^{-1}|i\pp\bp l|_{\omega_\Ca}+|i\pp\bp l|^2_{\omega_\Ca}\right) \leqslant Cz^{-\min\{1+\kappa,2\kappa,1+n\}}.
 \end{align}
Thus by finite step iteration we can find a better candidate $\tilde f$ and another constant $\tilde\lambda$ such that 
\begin{align*}
	l =  \tilde f+\tilde\lambda z+O(e^{-\delta z}),
	\quad|\nabla^2 \tilde f|_{\omega_\mathcal{C}}\leqslant C z^{-n-1+\epsilon},
	\quad |\nabla \tilde f|_{\omega_\Ca}\leqslant C z^{-\frac{n+1}{2}+\epsilon}, 
	\quad |\tilde f|<Cz^\epsilon.
\end{align*}

On the other hand,
\begin{align*}
	0&= \int_X i\pp\bp l\wedge \sum_{k=1}^n{n\choose k}(i\pp\bp l)^{k-1}\wedge \omega^{n-k}
	=\lim_{\varepsilon\to 0}\int_{\{(\Phi^{-1})^*t\leqslant -\log \varepsilon\}} i\pp\bp l\wedge \sum_{k=1}^n{n\choose k}(i\pp\bp l)^{k-1}\wedge \omega^{n-k}\\
	&=\lim_{\varepsilon\to 0}\int_{\{t=-\log \varepsilon\}} \tilde\lambda d^cz\wedge \sum_{k=1}^n{n\choose k}(i\pp\bp l)^{k-1}\wedge \omega^{n-k}
	=\lim_{\varepsilon\to 0}\int_{\{t=-\log \varepsilon\}} \tilde\lambda d^ct\wedge (i\pp\bp t)^{n-1}=\tilde\lambda \mathrm{Vol}(D).
\end{align*}

Consequently, $\tilde\lambda =0$, $l \leqslant Cz^\epsilon$ for any $\epsilon>0$. From the equation of $l$ we know that
\begin{align*}
	&\lim_{\varepsilon\to 0}\left|\int_{\{\left(\Phi^{-1}\right)^*t=-\log \varepsilon\}} ld^cl\wedge \sum_{k=1}^n{n\choose k}(i\pp\bp l)^{k-1}\wedge \omega^{n-k}\right|
	\leqslant \lim_{\varepsilon\to 0}\int_{\{t=-\log \varepsilon\}} \left|\frac{C}{z^{1-\epsilon}}\cdot d^ct \wedge(i\pp\bp t)^{n-1}\right|=0.
\end{align*}
Hence by integration by parts and $0=l(\tilde\omega^n-\omega^n)=l\cdot i\pp\bp l\wedge\sum_{k=0}^{n-1}\omega^k\wedge\tilde\omega^{n-1-k}$:
\begin{align*}
	0=-\int_X l\cdot i\pp\bp l\wedge\sum_{k=0}^{n-1}\omega^k\wedge\tilde\omega^{n-1-k}
	=\int_X dl\wedge d^cl\wedge\sum_{k=0}^{n-1}\omega^k\wedge\tilde\omega^{n-1-k}.
\end{align*}

Since $\sum_{k=0}^{n-1}\omega^k\wedge\tilde\omega^{n-1-k}$ is a positive form, we know that $dl=d^cl =0$.
\end{proof}

\section{Discussion and Questions}\label{discussions}

\subsection{Examples}
We present examples that $(X,\omega)$ is a Calabi-Yau manifold not asymptotically Calabi but weak asymptotically Calabi under the fixed diffeomorphism $\Phi$. As discussed in the end of the proof of Theorem \ref{main theorem 1}, we have the following:
\begin{claim}
	Let $(M,D)$ be the pair in Definition \ref{setting} with $X = M\setminus D$. Let $H_{+,c}^2(X) = \mathrm{Im}(H^2_c(X)\to H^2(X))\cap H_{+}^2(X)$. Fix a diffeomorphism $\Phi:\Ca\setminus \mathcal{K}\to X\setminus K$. Then for any $\mathfrak{k}$ in $H^2_+(X)$ but not $H^2_{+,c}(X)$ the metric $\omega$ we constructed in Theorem \ref{main theorem 1} is a Calabi-Yau metric not asymptotically Calabi but weak asymptotically Calabi.
\end{claim}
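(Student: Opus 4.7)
The plan is to argue by contradiction. Suppose that the Calabi--Yau metric $\omega$ produced in Theorem \ref{main theorem 1} is asymptotically Calabi in the sense of Definition \ref{asymptotic calabi}(2) with respect to the fixed diffeomorphism $\Phi$. I will deduce from this that $\mathfrak{k}=[\omega]$ lies in the image of $H^2_c(X)\to H^2(X)$, which together with $\mathfrak{k}\in H^2_{+}(X)$ forces $\mathfrak{k}\in H^2_{+,c}(X)$ and contradicts the hypothesis.

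Under the contradiction hypothesis, the closed $2$-form $\eta:=\Phi^*\omega-\omega_{\Ca}$ on $\Ca\setminus\mathcal{K}$ satisfies $|\eta|_{\omega_{\Ca}}=O(e^{-\underline{\delta}z^{n/2}})$. The key step will be to show that $[\eta]=0$ in the de Rham cohomology $H^2(\Ca\setminus\mathcal{K})$. Topologically, the end $\Ca\setminus\mathcal{K}$ deformation retracts via the $z$-flow onto the unit circle bundle $Y\subset N_D$, so every class in $H_2(\Ca\setminus\mathcal{K})$ can be represented by a cycle $\sigma_{z_0}$ lying inside a single level set $Y_{z_0}=\{z=z_0\}$, and closedness of $\eta$ makes $\int_{\sigma_{z_0}}\eta$ independent of $z_0$. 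From the Calabi ansatz the induced Riemannian metric on $Y_{z_0}$ depends polynomially on $z_0$: the horizontal directions grow like $\sqrt{z_0}$ while the $S^1$-fiber shrinks like $z_0^{-(n-1)/2}$, so the area of $\sigma_{z_0}$ is bounded by a polynomial in $z_0$. Combined with the super-polynomial decay of $|\eta|_{\omega_{\Ca}}$, this forces $\int_{\sigma_{z_0}}\eta\to 0$ as $z_0\to\infty$, hence $\int_\sigma\eta=0$ for every homology class $\sigma$ and $[\eta]=0$.

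To close the argument I will use that $\omega_{\Ca}=i\partial\bar\partial\bigl(\tfrac{n}{n+1}t^{(n+1)/n}\bigr)$ is globally $d$-exact on $\Ca$, so $[\omega_{\Ca}]=0$ in $H^2(\Ca\setminus\mathcal{K})$. Combined with $[\eta]=0$ this gives $[\Phi^*\omega]=0$, and through the isomorphism $\Phi^*\colon H^2(X\setminus K)\to H^2(\Ca\setminus\mathcal{K})$ induced by the diffeomorphism one concludes $[\omega|_{X\setminus K}]=0$. The long exact sequence of the pair $(X,X\setminus K)$ then places $\mathfrak{k}=[\omega]$ in the image of $H^2(X,X\setminus K)\to H^2(X)$, which factors through $H^2_c(X)=\varinjlim_{K}H^2(X,X\setminus K)\to H^2(X)$. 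Hence $\mathfrak{k}\in H^2_{+,c}(X)$, the desired contradiction.

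The main obstacle I anticipate is the vanishing of $[\eta]$: one has to carefully match the topological description of the end of $\Ca$ with the metric estimates, selecting representatives of $H_2(\Ca\setminus\mathcal{K})$ whose volumes inside level sets grow only polynomially so that the super-polynomial decay of $\eta$ dominates. Once this geometric bookkeeping is in hand, the rest is a formal unwinding of long exact sequences for $(X,X\setminus K)$ and the direct-limit description of compactly supported cohomology.
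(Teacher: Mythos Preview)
Your argument is correct and takes a genuinely different route from the paper.

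The paper does not give a separate proof of this claim; it simply points back to the end of the proof of Theorem~\ref{main theorem 1}. There the reasoning is \emph{constructive}: one exploits the explicit form of the metric $\omega=\beta+i\partial\bar\partial(U_K+\phi)$. When $\mathfrak{k}\notin H^2_{+,c}(X)$, the representative $\beta$ on $M$ can be chosen so that $\beta|_D$ is a nonzero primitive form with respect to $\omega_D$; then $p^*(\beta|_D)$ has $\omega_{\Ca}$-norm comparable to $z^{-1}$, while both $i\partial\bar\partial U_K$ and $i\partial\bar\partial\phi$ decay strictly faster (at least like $z^{-2+\epsilon}$). Hence $|\Phi^*\omega-\omega_{\Ca}|_{\omega_{\Ca}}$ is bounded below by $c\,z^{-1}$ along the horizontal directions, ruling out exponential decay directly.

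Your approach is instead \emph{topological}: you assume exponential decay of $\Phi^*\omega-\omega_{\Ca}$ and pair it against $2$-cycles pushed into level sets $Y_{z_0}$, whose areas grow only polynomially in $z_0$, to force $[\Phi^*\omega]=[\omega_{\Ca}]=0$ in $H^2(\Ca\setminus\mathcal{K})$; the long exact sequence of the pair then puts $\mathfrak{k}$ in the image of $H^2_c(X)\to H^2(X)$. This argument is cleaner and more robust: it uses nothing about the specific iteration scheme or the decay rates of $U_K$ and $\phi$, and it would apply verbatim to \emph{any} Calabi--Yau metric in the class $\mathfrak{k}$, not just the one produced in Theorem~\ref{main theorem 1}. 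What you lose compared to the paper's argument is the sharper quantitative statement that the decay rate is \emph{exactly} $z^{-1}$; your contradiction only excludes super-polynomial decay.
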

\begin{example}
	Let $M=\dP^1\times \dP^2$ with two projection maps $\pi_1:M\to \dP^1$ and $\pi_2:M\to \dP^2$. Then we have $D =-K_M= \pi_1^*O_{\dP^1}(2)\otimes \pi_2^*O_{\dP^2}(3)$. $\mathrm{Pic}(M)$ is generated by $\pi_1^*O_{\dP^1}(1)$ and $\pi_2^*O_{\dP^2}(1)$ and the image of each of them under the map $i^*: H^2(M)\to H^2(D)$ induced by the inclusion map $i:D\to M$ is not parallel to $[\omega_D] = c_1(N_D)$. Choose a primitive representative of any of these two classes and apply Theorem \ref{main theorem 1} we will find a Calabi-Yau metric not asymptotically Calabi but weak asymptotically Calabi.
\end{example}

These kind of examples could be found on any Fano manifold $M$ with $\mathrm{dim}_\mathbb{C} M\geqslant 3$ and $h_2(M)\geqslant 2$. We can find many examples in Mori-Mukai \cite{mori3fold}. Besides, there are also many examples in the weak Fano case but we do not have a simple topological sufficient condition.

\subsection{Weaker Decay Condition}
In our statement of uniqueness Theorem \ref{unique theorem 1}, we need the metric $\tilde\omega$ to be polynomially closed to $\omega$. The main difficulty to get rid of this condition lies in how to deduce the decomposition of $l = f+\lambda z+ O(e^{-\delta z})$ with $|f|\leqslant z^\epsilon$ for any $\epsilon>0$, where we cannot do iteration to improve the decay of $f$ as in (\ref{uniqueness iteration}).

It is natural to ask the following question:
\begin{question}
	Can we prove a stronger uniqueness theorem: If we have another Calabi-Yau metric $\tilde\omega$ such that $|\tilde\omega-\omega|_\omega\to 0$ when $r\to \infty$ for some distance function $r$ with respect to $\omega$, then $\tilde\omega=\omega$? 
	\end{question}
	
	One possible obstruction of this stronger uniqueness theorem is that we cannot rule out the possibility that there is a Calabi-Yau metric $\omega$ closed to the Calabi model space in a logarithm rate rather than any polynomial rate. The existence of this type of Calabi-Yau metric is also an interesting question to study.
	
\subsection{Compactification and Classification}


Hein-Sun-Viaclovsky-Zhang \cite{hsvz2} showed that any asymptotically Calabi manifold which is Calabi-Yau can be compactified complex analytically to a weak Fano manifold and the Calabi-Yau comes from the construction by Tian-Yau-Hein's package. 

In the weak asymptotic Calabi manifold case, when we only have the exponential closeness of complex structure, even though the metric is polynomial close, we can still get that any weak asymptotically Calabi manifold which is Calabi-Yau can be compactified complex analytically to a weak Fano manifold by repeating the argument in \cite{hsvz2} as one can also do $L^2$ estimate to construct the holomorphic function on $X$ from the holomorphic section of $N_D$. The key difference is to show that the compactification we get is K\"ahler by considering the behavior of the class at the end. By our uniqueness Theorem \ref{unique theorem 1} this Calabi-Yau metric $\omega$ comes from our generalized Tian-Yau construction in Theorem \ref{main theorem 1}.

We would like to make the following conjecture to further generalize this into slower decay assumption.
\begin{definition}
Let $X$ be a complete K\"ahler manifold with complex dimension $n$, complex structure $I$, K\"ahler form $\omega$ and $(n,0)$-form $\Omega$. We say $(X, I, \omega, \Omega)$ is \emph{polynomial asymptotically Calabi with rate} $(\kappa_1, \kappa_2)$ if:

 there exists $\kappa_1,\kappa_2>0$, a Calabi model space $(\mathcal{C}, I_\Ca, \omega_\Ca,\Omega_\Ca)$, and a diffeomorphism $\Phi: \mathcal{C} \setminus \mathcal{K} \rightarrow X \setminus K$, where $K \subset X$ and $\mathcal{K}\subset \mathcal{C}$ are compact, such that the following hold uniformly as $z \rightarrow+\infty$ :
	$$\left|\nabla_{\omega_{\mathcal{C}}}^k\left(\Phi^* I_X-I_{\mathcal{C}}\right)\right|_{\omega_{\mathcal{C}}} +\left|\nabla_{\omega_\Ca}^k\left(\Phi^* \Omega-\Omega_{\mathcal{C}}\right)\right|_{\omega_\Ca}=O\left(z^{-\kappa_1}\right),\quad\left|\nabla_{\omega_{\mathcal{C}}}^k\left(\Phi^* \omega-\omega_{\mathcal{C}}\right)\right|_{\omega_{\mathcal{C}}} =O\left(z^{-\kappa_2}\right)$$ for all $k \in \mathbb{N}_0$.
\end{definition}
\begin{conjecture}
	There are optimal constants $\lambda$ and $\mu$ such that for any $\kappa_1>\lambda$ and $\kappa_2>\mu$, any polynomial asymptotically Calabi Calabi-Yau manifold with rate $(\kappa_1, \kappa_2)$ can be compactified complex analytically to a weak Fano manifold. Furthermore, the Calabi-Yau metric comes from our generalized Tian-Yau construction in Theorem \ref{main theorem 1}.
\end{conjecture}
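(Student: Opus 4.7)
The plan is to combine two ingredients: a generalized compactification theorem in the spirit of Hein-Sun-Viaclovsky-Zhang \cite{hsvz2}, and a strengthened uniqueness statement that extends Theorem \ref{unique theorem 1} to compare polynomial asymptotically Calabi metrics with possibly different reference diffeomorphisms. The thresholds $\lambda$ and $\mu$ should emerge as the minimal decay rates at which (i) weighted H\"ormander $L^2$ estimates produce enough global holomorphic functions on $X$ to recover the divisor $D$ algebraically, and (ii) the K\"ahler class of $\omega$ extends across the added boundary and, after an alignment of diffeomorphisms, $\omega$ is polynomially close to the model Tian-Yau metric in the same class.

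For the compactification half, I would proceed as follows. For each sufficiently large integer $k$, lift a holomorphic section of $N_D^{\otimes k}$ from $\Ca$ to $X$ via $\Phi$, producing a section $\sigma_k$ which is only approximately holomorphic because $\Phi^* I_X \ne I_\Ca$. Set up a weighted $\bp$ equation $\bp u_k = \bp \sigma_k$ on $X$ using a H\"ormander weight $e^{-\tau}$ analogous to $\tau = \epsilon \rho_{B_1}(\mathfrak{t}) - \delta \rho_{B_2}(\mathfrak{t})^{1/n}$ in Lemma \ref{C0 bound of l}. The right-hand side is controlled by $|\Phi^* I_X - I_\Ca|_{\omega_\Ca} = O(z^{-\kappa_1})$, so for $\kappa_1 > \lambda$ the weighted $L^2$ integral converges and $u_k$ exists with controlled growth; the difference $\sigma_k - u_k$ is a global holomorphic function on $X$ of the desired type. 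Following \cite{hsvz2}, the ring generated by all such functions should be finitely generated, and its $\mathrm{Proj}$ gives a compactification $\bar X = X \cup D'$ with $D'$ smooth and $N_{D'}$ ample. Ricci-flatness of $\omega$ combined with $|\Phi^* \Omega - \Omega_\Ca|_{\omega_\Ca} = O(z^{-\kappa_1})$ forces $D' \in |-K_{\bar X}|$, so $-K_{\bar X}$ is nef and big. For the K\"ahler property of $\bar X$, given $\Phi^*\omega - \omega_\Ca = O(z^{-\kappa_2})$ and $\omega_\Ca = \frac{n}{n+1} i\pp\bp t^{(n+1)/n}$, a cut-off and gluing argument mirroring Section \ref{integral condition} should yield a K\"ahler class on $\bar X$ restricting to $[\omega]$, provided $\kappa_2 > \mu$.

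The second half is to identify $\omega$ with the metric $\omega_{TY}$ produced by Theorem \ref{main theorem 1} applied to the class $[\omega]|_X \in H^2_+(X)$, noting that $\omega_{TY}$ is polynomial asymptotically Calabi with rate $1$ under its own diffeomorphism $\Phi_{TY}$. The obstacle is that $\omega$ is polynomial asymptotically Calabi only under $\Phi$, not a priori under $\Phi_{TY}$; I would first study $\Phi_{TY}^{-1} \circ \Phi$ as an approximate biholomorphism of the end of $\Ca$, using that both pulled-back complex structures converge polynomially to $I_\Ca$, and show it is polynomially close to an element of $\mathrm{Aut}(\Ca, I_\Ca, \omega_\Ca)$. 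After composing with such an automorphism to realign diffeomorphisms, $|\omega_{TY} - \omega|_{\omega_{TY}}$ would decay polynomially, and Theorem \ref{unique theorem 1} (or a minor extension) would give $\omega = \omega_{TY}$. The hard part will be exactly this Liouville-type rigidity for approximate biholomorphisms of $(\Ca, I_\Ca, \omega_\Ca)$ in polynomially weighted spaces; pinning down the optimal $\mu$ amounts to characterizing this automorphism group up to polynomially decaying perturbation, and as the author notes after Theorem \ref{unique theorem 1}, the possible existence of logarithmically-close competing Calabi-Yau metrics on $\Ca$ is an open obstruction that prevents one from immediately taking $\mu$ to be zero.
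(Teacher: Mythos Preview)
The statement is a \emph{Conjecture} in the paper and is left open; there is no proof in the paper to compare against. What the paper provides is a short paragraph of commentary immediately following the conjecture: for $\kappa_1$ large enough the $L^2$ method of \cite{hsvz2} still produces holomorphic coordinates on the end of $X$, finding the \emph{optimal} $\lambda$ ``may not be approachable by $L^2$ method,'' and from Theorem \ref{unique theorem 1} together with the compactification argument for weak asymptotically Calabi manifolds the author expects $\mu = 0$.

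Your outline is broadly in line with this commentary. The compactification half via weighted $\bp$-estimates on lifted sections of $N_D^{\otimes k}$ is exactly the mechanism the author alludes to, and the uniqueness half via Theorem \ref{unique theorem 1} after realigning diffeomorphisms is the natural route. Two points where your expectations diverge from the paper's: first, the author explicitly flags that the $L^2$ approach will \emph{not} give the optimal $\lambda$, so your scheme would at best establish the conjecture for some threshold rather than the sharp one; second, the author is more optimistic than you about $\mu$, conjecturing $\mu = 0$ outright, whereas you treat the logarithmic-rate obstruction from Section \ref{discussions} as blocking $\mu = 0$ --- the paper regards that obstruction as relevant to the stronger uniqueness Question rather than to the present conjecture. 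In any case, the step you correctly identify as hard --- rigidity of approximate biholomorphisms of $(\Ca, I_\Ca, \omega_\Ca)$ in polynomially weighted spaces --- is genuinely open and is not addressed in the paper.
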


When $\kappa_1$ is large enough, we can still use $L^2$ estimate to construct holomorphic coordinate on the end of $\ols{X}$. However, the question to find optimal $\lambda$ may not be approachable by $L^2$ method. From the uniqueness theorem \ref{unique theorem 1} and the compactification process of weak asymptotically Calabi Calabi-Yau manifold, we expect that $\mu$ should be $0$.
 
\appendix
\section{estimate of the solution of ODE}\label{estimate of ODE}

In this section, we will look closely to the solution of the following ordinary differential equation:
\begin{align*}
    u''-(\frac{j^2n^2}{4}+n\lambda)z^{n-2} u = nz^{n-1}v,
\end{align*}
where $\lambda>0$, $n\geqslant 3$ and $n, j\in \mathbb{N}$.\\
By the transformation in \cite{SZ}, we have two cases: zero node case when $j=0$ and non-zero node case when $j>0$. We will give a brief summary of the estimate of fundamental solutions and have a estimate of $u$ with polynomial rate which slightly generalizes the results in \cite{SZ}.\\
\subsection{fundamental solution of zero mode}

In this section we focus on the zero mode: the equation 
\begin{align}\label{equation of zero mode}
    u''-n\lambda z^{n-2} u = nz^{n-1}v.
\end{align}
By \cite{SZ} we have the decay solution $\mathcal{D}(z)$ and growth solution $\mathcal{G}(z)$ of the homogeneous equation $u''(z) = nz^{n-2}\lambda u(z)$ given by
	\begin{align}\label{expression D and G 0}
	    \mathcal{D}(z) =\sqrt{z}K_{\frac{1}{n}}\left(2\sqrt{\tfrac{\lambda}{n}}\cdot z^{\frac{n}{2}}\right),\\
		\mathcal{G}(z) =\sqrt{z}I_{\frac{1}{n}}\left(2\sqrt{\tfrac{\lambda}{n}}\cdot z^{\frac{n}{2}}\right)
	\end{align}
	where $K$ and $I$ have the following expression: for $\nu \in \mathbb{R}$
	\begin{align*}
	    K_\nu(y)&=\int_0^{\infty} e^{-y \cosh t} \cosh (\nu t) dt,\\
        I_\nu(y)&=\frac{1}{\pi} \int_0^\pi e^{y \cos \theta} \cos (\nu \theta) d \theta-\frac{\sin (\nu \pi)}{\pi} \int_0^{\infty} e^{-y \cosh t-\nu t} d t
	\end{align*}
\begin{lemma}\cite{SZ}\label{estimate of I and K}[Proposition 3.3.]
     We have the following uniform estimate:\begin{enumerate}
         \item For all $\nu \in \mathbb{R}$, there is a constant $C(\nu)>1$ such that
            $$
            \begin{array}{l}
            C^{-1}(\nu) \cdot \frac{e^{-y}}{\sqrt{y}} \leqslant K_\nu(y) \leqslant C(\nu) \cdot \frac{e^{-y}}{\sqrt{y}}, \quad y \geqslant 1 ; \\
            I_\nu(y) \leqslant \begin{cases}C(\nu) \cdot \frac{e^y}{\sqrt{y}}, & y \geqslant 1, \\
            C(\nu) \cdot y^\nu, & 0<y \leqslant 1 .\end{cases}
            \end{array}
            $$
         \item   For all $\nu>-1$, we have
            $$
            I_\nu(y) \geqslant \begin{cases}C(\nu)^{-1} \cdot \frac{e^y}{\sqrt{y}}, & y \geqslant 1 \\ C(\nu)^{-1} \cdot y^\nu, & 0<y \leqslant 1\end{cases}
            $$
     \end{enumerate}
\end{lemma}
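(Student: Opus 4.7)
The plan is to combine two standard tools: Laplace's method applied to the integral representations (which controls the $y \to \infty$ regime) and the power series expansion of $I_\nu$ around $y = 0$ (which controls the $y \to 0^+$ regime for the $I_\nu$ bounds in the second line of each item). Throughout, the main technical task is tracking how the constants depend on $\nu$. In both regimes one argues that the integrand has a sharp ``peak'' (near $t=0$ for the $K_\nu$ representation, near $\theta=0$ for the first piece of $I_\nu$, and near $y = 0$ for the series), and the peak region contributes the main term while the complement decays strictly faster.

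For $K_\nu(y)$ with $y \geqslant 1$, I would analyze $K_\nu(y)=\int_0^\infty e^{-y\cosh t}\cosh(\nu t)\,dt$ by splitting at $t=1$. On $[0,1]$ use $\cosh t \geqslant 1 + t^2/2$ and $\cosh(\nu t) \leqslant e^{|\nu|t}$, then change variables $s = t\sqrt{y}$ to bound the contribution by $C(\nu) e^{-y}/\sqrt{y}$ via the Gaussian integral $\int_0^\infty e^{-s^2/2}e^{|\nu|s/\sqrt{y}}ds$. On $[1,\infty)$ use $\cosh t \geqslant e^t/2$ so that the tail decays like $e^{-yc}$ with $c > 1$ and is absorbed into the main estimate. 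For the matching lower bound, restrict the integral to $[0,y^{-1/2}]$, where $\cosh t \leqslant 1 + t^2$ gives $e^{-y\cosh t}\geqslant e^{-y-1}$, and $\cosh(\nu t)\geqslant 1$; the contribution is bounded below by $c(\nu) e^{-y}/\sqrt{y}$.

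For $I_\nu(y)$ with $y\geqslant 1$, the second term in the representation is dominated by $\int_0^\infty e^{-y\cosh t}\,dt$, which by the preceding analysis is $O(e^{-y}/\sqrt{y})$ and therefore negligible against the target rate $e^y/\sqrt{y}$. The first term $\pi^{-1}\int_0^\pi e^{y\cos\theta}\cos(\nu\theta)\,d\theta$ is treated by Laplace's method near $\theta=0$: use $\cos\theta \leqslant 1-(2/\pi^2)\theta^2$ on $[0,\pi]$ to get an upper bound of order $e^y/\sqrt{y}$ after rescaling $\theta = s/\sqrt{y}$. For the lower bound when $\nu > -1$, restrict to $[0,y^{-1/2}]$ and use $\cos\theta \geqslant 1 - \theta^2/2$ together with $\cos(\nu\theta)\geqslant 1/2$ for $y$ large enough depending on $\nu$; the constraint $\nu > -1$ is inherited only later, at the small-$y$ lower bound.

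For $I_\nu(y)$ with $0 < y \leqslant 1$, I would switch to the convergent power series $I_\nu(y) = \sum_{k\geqslant 0} (y/2)^{2k+\nu}/(k!\,\Gamma(k+\nu+1))$. When $\nu > -1$, $\Gamma(k+\nu+1)$ is positive and finite for all $k\geqslant 0$, the leading term is $(y/2)^\nu/\Gamma(\nu+1)$, and the rest is bounded by $C(\nu) y^{\nu+2}$ on $[0,1]$, producing both the upper bound $I_\nu(y)\leqslant C(\nu)y^\nu$ and the matching lower bound $I_\nu(y)\geqslant C(\nu)^{-1}y^\nu$. For the upper bound in full generality $\nu\in\RR$, one reduces to $\nu > -1$ via the recurrence $I_{\nu-1}(y) - I_{\nu+1}(y) = (2\nu/y)\,I_\nu(y)$ iteratively, or works directly with the integral representation, checking that $\int_0^\infty e^{-y\cosh t - \nu t}\,dt$ is finite for all fixed $\nu, y > 0$ because $y\cosh t \sim ye^t/2$ eventually dominates $|\nu|t$. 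The main obstacle I anticipate is precisely this bookkeeping of how $C(\nu)$ depends on $\nu$ when $\nu$ approaches a negative integer, where $\Gamma(\nu+1)$ has poles that must cancel against the factor $\sin(\nu\pi)$ multiplying the remainder integral; this is a finite-step calculation once one uses the reflection formula, but it is the only place where uniformity in $\nu$ is not immediate.
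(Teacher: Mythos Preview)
The paper does not give its own proof of this lemma---it is cited from \cite{SZ}, Proposition~3.3---but fragments of that argument are reproduced inside the proof of the derivative estimate Lemma~\ref{estimate of I' and K'}, and they follow exactly your Laplace-method strategy: bound $e^{-y\cosh t}\leqslant e^{-y-yt^2/2}$ for the $K_\nu$-type integrals, and split $\int_0^\pi e^{y\cos\theta}\cos(\nu\theta)\,d\theta$ into a peak region near $\theta=0$ and a negligible tail for the $I_\nu$ bounds. Your proposal is correct and essentially identical in method; the only cosmetic difference is that the quoted lower bound for $I_\nu$ cuts at the $y$-independent point $\eta_\nu=\min(\pi,\pi/(3|\nu|))$ (so that $\cos(\nu\theta)\geqslant\tfrac12$ holds on all of $[0,\eta_\nu]$) rather than at your $y^{-1/2}$, which spares the separate compactness argument on $[1,C(\nu)]$.
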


\begin{corollary}\cite{SZ}  
	For $z > \sqrt[\leftroot{-3}\uproot{3}n]{\frac{n}{4\lambda_1}}$, there exists a constant $C$ which only depends on $n$ such that 
	\begin{align*}
		\frac{1}{C}\cdot\frac{
		e^{-\frac{2}{\sqrt{n}}\lambda^{\frac{1}{2}}\cdot z^{\frac{n}{2}}}		}{
		\lambda^{\frac{1}{4}}\cdot z^{\frac{n-2}{4}}
		}<&\; \mathcal{D}(z)< C \cdot\frac{
		e^{-\frac{2}{\sqrt{n}}\lambda^{\frac{1}{2}}\cdot z^{\frac{n}{2}}}		}{
		\lambda^{\frac{1}{4}}\cdot z^{\frac{n-2}{4}}
		}\\
		\frac{1}{C}\cdot\frac{
		e^{\frac{2}{\sqrt{n}}\lambda^{\frac{1}{2}}\cdot z^{\frac{n}{2}}}		}{
		\lambda^{\frac{1}{4}}\cdot z^{\frac{n-2}{4}}
		}<&\; \mathcal{G}(z)< C \cdot\frac{
		e^{\frac{2}{\sqrt{n}}\lambda^{\frac{1}{2}}\cdot z^{\frac{n}{2}}}		}{
		\lambda^{\frac{1}{4}}\cdot z^{\frac{n-2}{4}}
		}.
	\end{align*}    
\end{corollary}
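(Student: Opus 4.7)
The plan is to obtain both chains of inequalities by direct substitution of Lemma~\ref{estimate of I and K} into the explicit Bessel-type formulas for $\mathcal{D}(z)$ and $\mathcal{G}(z)$. Set
\[
y \;:=\; 2\sqrt{\tfrac{\lambda}{n}}\,z^{n/2},
\]
so that $\mathcal{D}(z)=\sqrt{z}\,K_{1/n}(y)$ and $\mathcal{G}(z)=\sqrt{z}\,I_{1/n}(y)$. The first thing I would check is that the range hypothesis $z>(n/(4\lambda_1))^{1/n}$, combined with $\lambda\geqslant \lambda_1$, forces $y\geqslant 1$; indeed, $y^2 = 4\lambda z^n/n \geqslant 4\lambda_1 z^n/n > 1$. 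This puts us in the regime of the Lemma where the two-sided exponential bounds on $K_{1/n}$ and $I_{1/n}$ (with $\nu=1/n>-1$, so the lower bound for $I_\nu$ is available) apply with constants depending only on $n$.

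Next I would work out the algebra relating the weights in $y$ to weights in $z$ and $\lambda$. From the definition of $y$,
\[
\sqrt{y} \;=\; \sqrt{2}\,\Bigl(\tfrac{\lambda}{n}\Bigr)^{1/4} z^{n/4},
\qquad
\frac{\sqrt{z}}{\sqrt{y}} \;=\; \frac{n^{1/4}}{\sqrt{2}}\cdot\frac{1}{\lambda^{1/4}\,z^{(n-2)/4}},
\]
and $e^{\pm y} = e^{\pm (2/\sqrt{n})\lambda^{1/2} z^{n/2}}$. Plugging the Lemma's estimates
\[
C(1/n)^{-1}\frac{e^{-y}}{\sqrt{y}}\;\leqslant\; K_{1/n}(y)\;\leqslant\; C(1/n)\frac{e^{-y}}{\sqrt{y}},\qquad
C(1/n)^{-1}\frac{e^{y}}{\sqrt{y}}\;\leqslant\; I_{1/n}(y)\;\leqslant\; C(1/n)\frac{e^{y}}{\sqrt{y}}
\]
into $\sqrt{z}\cdot(\cdot)$ produces exactly the claimed upper and lower bounds, with a new constant $C=C(n)$ absorbing $C(1/n)$ and the factor $n^{1/4}/\sqrt{2}$.

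There is no real obstacle: every step is a substitution, and the only subtlety is verifying the threshold $y\geqslant 1$ so that the Lemma applies on the asserted range of $z$. I would write this up as a short two-line calculation following the threshold check, remarking that the constant $C$ is uniform in $\lambda\geqslant \lambda_1$ because the $\lambda$-dependence has been extracted algebraically into the explicit $\lambda^{1/4}$ and $e^{\pm(2/\sqrt{n})\lambda^{1/2}z^{n/2}}$ factors, while $C(1/n)$ depends only on $n$.
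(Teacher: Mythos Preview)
Your proposal is correct and is exactly the intended argument: the corollary is stated in the paper as an immediate consequence (cited from \cite{SZ}) of Lemma~\ref{estimate of I and K}, and your substitution $y=2\sqrt{\lambda/n}\,z^{n/2}$, the check that $y\geqslant 1$ under $\lambda\geqslant\lambda_1$ and $z>(n/(4\lambda_1))^{1/n}$, and the algebra $\sqrt{z}/\sqrt{y}=(n^{1/4}/\sqrt{2})\,\lambda^{-1/4}z^{-(n-2)/4}$ are all correct.
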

		
With those estimates, we can give a $C^0$ bound of $u(z)$. By computation in \cite{SZ} we know that the Wronskian $$\mathcal{W}(\mathcal{G},\mathcal{D}) =\mathcal{G}(z)\mathcal{D}'(z)-\mathcal{G}'(z)\mathcal{D}(z) = -\frac{n}{2}.$$ Hence we have a solution of \ref{equation of zero mode} as follows:
\begin{align}\label{u(D,G)}
		u(z) &= -2\left(\mathcal{D}(z)\int_1^z\mathcal{G}(s)s^{n-1}v(s)ds+\mathcal{G}(z)\int_z^\infty \mathcal{D}(s)s^{n-1}v(s)ds\right)
	\end{align}

We firstly introduce an estimate of the solution of this ordinary differential equation:
\begin{proposition}
    \label{estimate of u wrt v 0}
	Recall that $\lambda_1$ is the first nonzero positive eigenvalue of $-\Delta_Y$.
	Let $v$ be a function such that $|v(z)|\leqslant C_0 z^\delta$ for $z>1$. For any $\lambda$ such that $\lambda > \lambda_1>0$, we can find solution of equation $u''(z) = nz^{n-2}\lambda u(z) + nz^{n-1}v$ such that $|u(z)|\leqslant C \cdot C_0z^{\delta +1}$, $|u'(z)|\leqslant C\cdot C_0z^{\delta +\frac{n}{2}}$, $|u''(z)|\leqslant C\cdot C_0z^{\delta +n-1}$ on $z>C$ for some constant $C>1$ only depend on $n$, $\lambda_1$ and $\delta$ .
\end{proposition}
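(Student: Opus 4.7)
The plan is to use the explicit variation-of-parameters formula \eqref{u(D,G)},
\[
u(z) = -2\mathcal{D}(z)\,I_1(z) - 2\mathcal{G}(z)\,I_2(z),
\]
with $I_1(z) := \int_1^z \mathcal{G}(s) s^{n-1} v(s)\,ds$ and $I_2(z) := \int_z^\infty \mathcal{D}(s) s^{n-1} v(s)\,ds$, and to prove the three bounds by exploiting the cancellation between the exponential growth of $\mathcal{G}$ and the exponential decay of $\mathcal{D}$ at infinity. Using the sharp upper bound on $\mathcal{G}$ from the corollary and the hypothesis $|v(s)| \leqslant C_0 s^\delta$, the integrand in $I_1$ is dominated pointwise by $C_n C_0 \lambda^{-1/4} s^{(3n-2)/4 + \delta}\, e^{\frac{2}{\sqrt{n}} \lambda^{1/2} s^{n/2}}$. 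A single integration by parts against $d\bigl(e^{\frac{2}{\sqrt{n}} \lambda^{1/2} s^{n/2}}\bigr)$ extracts the boundary value at $s = z$, producing a leading term of size $C_{n,\delta} C_0 \lambda^{-3/4} z^{(n+2)/4 + \delta}\, e^{\frac{2}{\sqrt{n}} \lambda^{1/2} z^{n/2}}$ together with a remainder of the same shape whose polynomial exponent has dropped by $n/2$; iterating finitely many times makes the remainder subleading once $z$ exceeds a threshold depending on $n, \delta, \lambda_1$. Multiplying by the sharp upper bound on $|\mathcal{D}(z)|$ cancels the exponentials and gives $|\mathcal{D}(z) I_1(z)| \leqslant C_{n, \delta, \lambda_1} C_0 z^{\delta + 1}$. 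The estimate of $I_2$ is symmetric: the improper integral converges because of the exponential decay of $\mathcal{D}$, and the same integration-by-parts argument, with the boundary contribution now at $s = z$, yields $|\mathcal{G}(z) I_2(z)| \leqslant C_{n, \delta, \lambda_1} C_0 z^{\delta + 1}$, so $|u(z)| \leqslant C \cdot C_0 z^{\delta + 1}$.

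For $u'$, differentiate the formula: the two endpoint contributions $\pm 2\mathcal{D}(z)\mathcal{G}(z) z^{n-1} v(z)$ cancel algebraically, leaving
\[
u'(z) = -2 \mathcal{D}'(z)\, I_1(z) - 2 \mathcal{G}'(z)\, I_2(z).
\]
The Bessel-function differentiation formulas for $I_{1/n}'$ and $K_{1/n}'$ combined with Lemma \ref{estimate of I and K} give $|\mathcal{D}'(z)| \leqslant C_n \sqrt{\lambda}\, z^{n/2 - 1} |\mathcal{D}(z)|$ and analogously for $\mathcal{G}'$, so the $u'$ estimate follows from the $C^0$ argument with an extra factor of order $\sqrt{\lambda}\, z^{n/2 - 1}$, producing $|u'(z)| \leqslant C \cdot C_0 z^{\delta + n/2}$. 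For $u''$, the ODE $u''(z) = n\lambda z^{n-2} u(z) + n z^{n-1} v(z)$ itself yields $|u''(z)| \leqslant C \cdot C_0 z^{\delta + n - 1}$ directly from the previously established bounds on $u$ and $v$.

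The main technical obstacle is making the Laplace-type estimate on $\int_1^z s^a e^{bs^{n/2}}\,ds$ (and its tail analogue for $I_2$) sharp and uniform in $\lambda \geqslant \lambda_1$. The structural fact underlying the iteration is that a single integration by parts multiplies the integrand by a factor of order $\lambda^{-1/2} z^{-n/2}$, so the ratio of the remainder to the leading boundary term is uniformly small once $z$ exceeds $C(n, \delta, \lambda_1)$. Tracking the explicit powers of $\lambda$ and invoking $\lambda \geqslant \lambda_1$ converts the $\lambda^{-1}$ prefactor in the $C^0$ bound into the constant $C$ that depends only on $n$, $\delta$, and $\lambda_1$, as required.
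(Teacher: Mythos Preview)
Your proposal is correct and follows essentially the same route as the paper: the same variation-of-parameters formula, the same Laplace-type integration by parts against $e^{\frac{2}{\sqrt{n}}\lambda^{1/2}s^{n/2}}$ to extract the leading boundary term, the same cancellation of endpoint terms in $u'$ leaving $-2\mathcal{D}'(z)I_1(z)-2\mathcal{G}'(z)I_2(z)$, and the same direct use of the ODE for $u''$. The paper writes out the integration-by-parts expansion explicitly to three or four steps and records a separate lemma for the bounds on $\mathcal{D}',\mathcal{G}'$, but the content is identical to what you outline.
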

\begin{proof}
	
	Now we can estimate $\left\|u\right\|_{L^\infty((\max\{1,(\frac{n}{4\lambda_1})^{\frac{1}{n}}\},\infty))}$. Let $\mu  = \tfrac{2}{\sqrt{n}}\lambda^{\frac{1}{2}}$. By integration by parts the first term $\mathcal{D}(z)\int_1^z\mathcal{G}(s)s^{n-1}v(s)ds$ in \ref{u(D,G)} is bounded by a constant $C(n)$ times the following term:
	\begin{align*}
		&\frac{1}{\lambda^{\frac{1}{2}}\cdot z^{\frac{n-2}{4}}
		}\int_1^ze^{\,\mu\left(s^{\frac{n}{2}}-z^{\frac{n}{2}}\right)}s^{\delta+\frac{3n-2}{4}}ds\\
		 \leqslant &\,\frac{1}{\sqrt{n}\lambda}z^{\delta+1} -\frac{\delta+\frac{n+2}{4}}{n\lambda^{\frac{3}{2}}}z^{\delta+1-\frac{n}{2}} + \frac{(\delta+\frac{n+2}{4})(\delta+\frac{-n+2}{4})}{n\sqrt{n}\lambda^{2}}z^{\delta+1-n}\\
		 &\hspace{2.5cm}-\frac{(\delta+\frac{n+2}{4})(\delta+\frac{-n+2}{4})(\delta+\frac{-3n+2}{4})}{n^2\lambda^{\frac{5}{2}}}z^{\delta+1-\frac{3n}{2}}\\
		 &\hspace{2.5cm}+\frac{\max\{0,(\delta+\frac{n+2}{4})(\delta+\frac{-n+2}{4})(\delta+\frac{-3n+2}{4})(\delta+\frac{-5n+2}{4})\}}{n^2\lambda^{\frac{5}{2}}}z^{\delta+1-\frac{3n}{2}}\\
		 &\hspace{2.5cm}-\left(\frac{1}{\sqrt{n}\lambda} -\frac{(\delta+\frac{n+2}{4})}{n\lambda^{\frac{3}{2}}} +\frac{(\delta+\frac{n+2}{4})(\delta+\frac{-n+2}{4})}{n\sqrt{n}\lambda^{2}}+C(n,\delta)\frac{1}{\lambda^{\frac{5}{2}}}\right)e^{\mu}\frac{e^{-\mu z^{\frac{n}{2}}}}{z^{\frac{n-2}{4}}}.
	\end{align*}
Here we also use the following observation:
Since $\mu\geqslant \frac{2}{\sqrt{n}}\lambda_1^{\frac{1}{2}}$, we know that the maximum of $s^{\delta+\frac{-5n-2}{4}}e^{\,\mu\left(s^{\frac{n}{2}}-z^{\frac{n}{2}}\right)} $ on the interval $[1,z]$ is at $z$ when $z$ is larger than a uniform constant which is independent with respect to $\lambda$ but only on $n$ and $\lambda_1$. So we have 
$$\int_1^zs^{\delta+\frac{-5n-2}{4}}e^{\,\mu\left(s^{\frac{n}{2}}-z^{\frac{n}{2}}\right)}ds\leqslant (z-1)z^{\delta+\frac{-5n-2}{4}}<z^{\delta+\frac{-5n+2}{4}}.$$

For the second term $\mathcal{G}(z)\int_z^\infty \mathcal{D}(s)s^{n-1}v(s)ds$, we have similar estimate:
	\begin{align*}
		&\frac{1}{\lambda^{\frac{1}{2}}\cdot z^{\frac{n-2}{4}}
		}\int_z^\infty e^{\,\mu\left(z^{\frac{n}{2}}-s^{\frac{n}{2}}\right)}s^{\delta+\frac{3n-2}{4}}ds\\
		 \leqslant &\,\frac{1}{\sqrt{n}\lambda}z^{\delta+1} +\frac{\delta+\frac{n+2}{4}}{n\lambda^{\frac{3}{2}}}z^{\delta+1-\frac{n}{2}} + \frac{(\delta+\frac{n+2}{4})(\delta+\frac{-n+2}{4})}{n\sqrt{n}\lambda^{2}}z^{\delta+1-n}\\
		 &\hspace{2cm}+\frac{2\left|(\delta+\frac{n+2}{4})(\delta+\frac{-n+2}{4})(\delta+\frac{-3n+2}{4})\right|}{n^2\lambda^{\frac{5}{2}}}z^{\delta+1-\frac{3n}{2}}.
	\end{align*}

So we have the uniform estimate for $u$ that for any $z>C(n,\delta,\lambda_1)$,
\begin{align*}
	|u(z)|\leqslant C(n) \cdot C_0\frac{z^{\delta+1}}{\lambda}.
\end{align*}

For the derivative $u'$ we can do the same computation as in \cite{SZ} to estimate $\mathcal{D}'(z)$ and $\mathcal{G}'(z)$. In fact, we have the following estimate:
\begin{lemma}\label{estimate of I' and K'}
   \begin{align*}
       \frac{1}{C}\cdot \frac{e^{y}}{\sqrt{y}}<I_{\frac{1}{n}}'(y)<C\cdot \frac{e^{y}}{\sqrt{y}}, \quad \frac{1}{C}\cdot \frac{e^{-y}}{\sqrt{y}}<-K_{\frac{1}{n}}'(y)<C\cdot \frac{e^{-y}}{\sqrt{y}}
   \end{align*} for some fixed constant $C$ and any $y>1$.
\end{lemma}
\begin{proof}
    Notice that
\begin{align*}
	I_{\frac{1}{n}}'(y) &= \frac{1}{2\pi}\int_0^\pi e^{y\cos \theta} \left(\cos\tfrac{(n+1)\theta}{n}+\cos\tfrac{(n-1)\theta}{n}\right)d\theta+\frac{\sin{\frac{\pi}{n}}}{2\pi}\int_0^\infty e^{-y\cosh t}\left(e^{-\frac{(n+1)t}{n}}+e^{\frac{(n-1)t}{n}}\right)dt.
\end{align*}
As in the proof of \cite{SZ} Prop. 3.3, we know that for any $\nu \in \RR $\begin{align*}
    &\int_0^{\infty} e^{-y \cosh t-\nu t} d t \leqslant e^{-y} \int_0^{\infty} e^{-\frac{y t^2}{2}-\nu t} d t \leqslant C(\nu) \cdot \frac{e^{-y}}{\sqrt{y}},\\
    &\left| \int_0^\pi e^{y \cos \theta} \cos (\nu \theta) d \theta\right| \leqslant e^y \int_0^{\frac{\pi}{3}} e^{-\frac{y \cdot \theta^2}{4}} d \theta+\frac{2 e^{\frac{y}{2}}}{3} \leqslant \frac{2 e^y}{\sqrt{\pi} \cdot \sqrt{y}}+\frac{2 e^{\frac{y}{2}}}{3} \leqslant \frac{10 e^y}{\sqrt{y}}.
\end{align*}
For $\nu > -1$ and $\nu \neq 0$, let $\eta_\nu=\min \left(\pi, \frac{\pi}{3|\nu|}\right)$, 
\begin{align*}
	&\quad\int_0^\pi e^{y \cos \theta} \cos (\nu \theta) d \theta =\int_0^{\eta_\nu} e^{y \cos \theta} \cos (\nu \theta) d \theta+\int_{\eta_\nu}^\pi e^{y \cos \theta} \cos (\nu \theta) d \theta\\
	&\geqslant \frac{1}{2} e^y \int_0^{\eta_\nu} e^{-\frac{\theta^2}{2} y} d \theta - \left|\int_{\eta_\nu}^\pi e^{y \cos \theta} \cos (\nu \theta) d \theta\right| \geqslant C(\nu) \frac{e^y}{\sqrt{y}} -\int_{\eta_\nu}^\pi e^{y \cos \theta} d \theta \\
 &\geqslant C(\nu) \frac{e^y}{\sqrt{y}}-\left(\pi-\eta_\nu\right) e^{\cos \left(\eta_\nu\right) y} \geqslant C(\nu) \frac{e^y}{\sqrt{y}}.
\end{align*}
So we get that $
     \frac{1}{C}\cdot \frac{e^{y}}{\sqrt{y}}<I_{\frac{1}{n}}'(y)<C\cdot \frac{e^{y}}{\sqrt{y}}$.

On the other hand,
\begin{align*}
	K_{\frac{1}{n}}'(y) &= -\frac{1}{2}\int_0^\infty e^{-y\cosh t}\left(\cosh{\frac{(n+1)t}{n}}+\cosh{\frac{(n-1)t}{n}}\right)dt\\
    &=-\frac{1}{2}(K_{\frac{n+1}{n}}(y)+K_{\frac{n-1}{n}}(y)).
\end{align*}
By the estimate of $K_\nu$, we know that $ \frac{1}{C}\cdot \frac{e^{-y}}{\sqrt{y}}<-K_{\frac{1}{n}}'(y)<C\cdot \frac{e^{-y}}{\sqrt{y}}$.
\end{proof}

\begin{corollary}
For $z > \sqrt[\leftroot{-3}\uproot{3}n]{\frac{n}{4\lambda_1}}$, there exists a constant $C$ which only depends on $n$ such that 
	\begin{align*}
		\frac{1}{C}\cdot \lambda^{\frac{1}{4}} z^{\frac{n-2}{4}}
		e^{-\frac{2}{\sqrt{n}}\lambda^{\frac{1}{2}}\cdot z^{\frac{n}{2}}}
		<-&\mathcal{D}'(z)<C\cdot \lambda^{\frac{1}{4}} z^{\frac{n-2}{4}}
		e^{-\frac{2}{\sqrt{n}}\lambda^{\frac{1}{2}}\cdot z^{\frac{n}{2}}},\\
        \frac{1}{C}\cdot \lambda^{\frac{1}{4}} z^{\frac{n-2}{4}}
		e^{ \frac{2}{\sqrt{n}}\lambda^{\frac{1}{2}}\cdot z^{\frac{n}{2}}}
        <\quad&\mathcal{G}'(z)<C\cdot \lambda^{\frac{1}{4}} z^{\frac{n-2}{4}}
		e^{ \frac{2}{\sqrt{n}}\lambda^{\frac{1}{2}}\cdot z^{\frac{n}{2}}}.
    \end{align*}   
\end{corollary}
\begin{proof}
This can be seen directly from computing $\mathcal{D}'$ and $\mathcal{G}'$ with the substitution in (\ref{expression D and G 0}).
    \begin{align*}
-\mathcal{D}'(z)&=-\frac{1}{2\sqrt{z}}K_{\frac{1}{n}}\left(2\sqrt{\tfrac{\lambda}{n}}\cdot z^{\frac{n}{2}}\right)-\sqrt{n\lambda}z^{\frac{n-1}{2}}K'_{\frac{1}{n}}\left(2\sqrt{\tfrac{\lambda}{n}}\cdot z^{\frac{n}{2}}\right),\\
\mathcal{G}'(z)&=\frac{1}{2\sqrt{z}}I_{\frac{1}{n}}\left(2\sqrt{\tfrac{\lambda}{n}}\cdot z^{\frac{n}{2}}\right)+\sqrt{n\lambda}z^{\frac{n-1}{2}}I'_{\frac{1}{n}}\left(2\sqrt{\tfrac{\lambda}{n}}\cdot z^{\frac{n}{2}}\right).
\end{align*}
By Lemma \ref{estimate of I' and K'} we get the estimate.
\end{proof}

 Consequently, by integration by parts as before we have the $C^1$ estimate of $u$:
 \begin{align*}
     |u'(z)| &= \left|2\left(\mathcal{D}'(z)\int_1^z\mathcal{G}(s)s^{n-1}v(s)ds+\mathcal{G}'(z)\int_z^\infty \mathcal{D}(s)s^{n-1}v(s)ds\right)\right|\leqslant C(n)\cdot C_0 \frac{z^{\delta+\frac{n}{2}}}{\lambda^{\frac{1}{2}}}
 \end{align*}
 and 
 \begin{align*}
     u''(z) = n\lambda z^{n-2} u +nz^{n-1}v\leqslant C(n) \cdot C_0 z^{\delta+n-1}
 \end{align*}
for $z>C(n,\delta,\lambda_1)$. \\
In the end, we get if $|v(z)|\leqslant C_0z^\delta$ on  $z>C(n,\delta, M)$, then for any $z>C(n,\delta, M)$ $$\left|u(z)\right|\leqslant C(n) \cdot C_0 \frac{z^{\delta+1}}{\lambda}, \quad \left|u'(z)\right|\leqslant C(n) \cdot C_0 \frac{z^{\delta+\frac{n}{2}}}{\lambda^{\frac{1}{2}}}, \quad \left|u''(z)\right|\leqslant C(n) \cdot C_0 z^{\delta+n-1}.$$
\end{proof}

\subsection{fundamental solution of non-zero mode}

In this section we focus on the non-zero mode: the equation 
\begin{align}\label{equation of nonzero mode}
    u''-(\frac{j^2n^2}{4}+n\lambda)z^{n-2} u = nz^{n-1}v.
\end{align}
By \cite{SZ} we have the decay solution $\mathcal{D}(z)$ and growth solution $\mathcal{G}(z)$ of the homogeneous equation $u''(z) = (\frac{j^2n^2}{4}+n\lambda)z^{n-2} u $ given by
\begin{align}\label{expression D and G 1}
	&\mathcal{D}(z)=e^{\frac{j z^n}{2}} \cdot \Psi^\flat\left(\beta, \alpha,-j z^n\right),\\
    &\mathcal{G}(z)=e^{\frac{j z^n}{2}} \cdot \Phi^{\sharp}\left(\beta, \alpha,-j z^n\right), 
\end{align}
where $\alpha = 1-\frac{1}{n}$, $\beta = \frac{n-1}{2n}-\frac{\lambda}{nj}\leqslant 0$, $\Phi^{\sharp}\left(\beta, \alpha,-j z^n\right)$ and $\Psi^\flat\left(\beta, \alpha,-j z^n\right)$ have the following expression:
	\begin{align}\label{expression phi and psi 1}
	    \Psi^\flat(\beta, \alpha, y) &= \frac{e^y}{\Gamma(\alpha-\beta)} \int_0^{\infty} e^{y s} s^{\alpha-\beta-1}(1+s)^{\beta-1} d s,\\
        \Phi^{\sharp}(\beta, \alpha, y) &=\frac{\Gamma(\alpha)}{\Gamma(\alpha-\beta)} \cdot e^y(-y)^{\beta - \alpha} \cdot \int_0^{\infty} e^{\frac{s}{y}} \cdot s^{\frac{\alpha-1}{2}-\beta} \cdot I_{\alpha-1}(2 \sqrt{s}) ds.
	\end{align}
 Let $Q = \alpha -\beta-1$, $\gamma_n = \frac{1}{2} +\frac{1}{n}$. Denote 
$$F(t) =y t+Q \log \frac{t}{t+1}.$$ Then $F$ is strictly concave in $\RR$ if $Q>0$. Let $t_0$ be the only critical point of $F$. We have $$t_0 =\frac{1}{2}\left(-1+\sqrt{1+\frac{4 Q}{-y}}\right).$$ Denote $$G(u) =-u^2+2(-y)^{\frac{1}{2}} \cdot u+\left(2 Q+\gamma_n\right) \log u.
$$ Then $G$ is strictly concave in $\RR_+$. Let $u_0$ be the only critical point of $G$. Then $$u_0  =\frac{(-y)^{\frac{1}{2}}}{2} \cdot\left(1+\sqrt{1+\frac{4 Q}{-y}+\frac{2 \gamma_n}{-y}}\right).$$
In \cite{SZ} by Laplace method, we can show the following estimate:
\begin{lemma}\cite{SZ}\label{estimate of psi and phi}
There is a constant $C$ which only depends on $n$ such that
\\when $Q\geqslant 1$, 
         \begin{align*}
C_n^{-1} \cdot Q^{-\frac{1}{4}-\frac{1}{2 n}} \cdot \frac{(-y)^{-1} \cdot e^{y+F\left(t_0\right)}}{\Gamma(Q+1)} \leqslant &\Psi^\flat(\beta, \alpha, y) \leqslant C_n \cdot Q^{\frac{1}{4}} \cdot \frac{e^{y+F\left(t_0\right)}}{\Gamma(Q+1)},\\
C_n^{-1} \cdot Q^{-\frac{1}{4}} \cdot \frac{(-y)^{\frac{2-n}{4n}} \cdot e^{y+G\left(u_0\right)}}{\Gamma(Q+1)} \leqslant &\Phi^{\sharp}(\beta, \alpha, y) \leqslant C_n \cdot \frac{(-y)^{\frac{2-n}{4n}} \cdot e^{y+G\left(u_0\right)}}{\Gamma(Q+1)};
\end{align*}
when $Q\leqslant 1$,
\begin{align*}
C_n^{-1} \cdot e^y \cdot(-y)^{\beta-\alpha} & \leqslant \Psi^\flat(\beta, \alpha, y) \leqslant e^y \cdot(-y)^{\beta-\alpha}, \\
C_n^{-1} \cdot(-y)^{-\beta} & \leqslant \Phi^{\sharp}(\beta, \alpha, y) \leqslant C_n \cdot(-y)^{-\beta}
\end{align*}for any $y\leqslant -1$.
\end{lemma}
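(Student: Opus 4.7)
The plan is to apply Laplace's method to the integral representations of $\Psi^\flat$ and $\Phi^\sharp$, splitting the analysis according to whether $Q$ is large ($Q\geqslant 1$) or small ($Q\leqslant 1$). For $\Psi^\flat(\beta,\alpha,y)$, I would factor the integrand of the defining integral as $e^{F(s)}(1+s)^{\alpha-2}$, so that the concave exponent $F$ carries all the non-integrable behaviour while the remaining factor, with exponent $\alpha-2=-1-\tfrac{1}{n}<0$, is uniformly bounded by $1$. The critical point $t_0$ is determined by $t_0(t_0+1)=Q/(-y)$ and a direct computation gives $-F''(t_0)=y^2\sqrt{1+4Q/(-y)}/Q$. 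For the upper bound I would use the Gaussian envelope $e^{F(s)}\leqslant e^{F(t_0)+\tfrac12 F''(t_0)(s-t_0)^2}$ from concavity and integrate over $\mathbb{R}$, producing a prefactor $\sqrt{2\pi/(-F''(t_0))}$; for the lower bound I would restrict the integration to $[t_0-\rho,t_0+\rho]$ with $\rho=c_n|F''(t_0)|^{-1/2}$, using a Taylor control of $F$ and the elementary bound $(1+s)^{\alpha-2}\gtrsim (1+t_0)^{\alpha-2}$ on this window.

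For $\Phi^\sharp(\beta,\alpha,y)$, I would make the substitution $s=(-y)u^2$, which turns the integral into
\[
2(-y)^{(\alpha+1)/2-\beta}\int_0^\infty e^{-u^2}u^{\alpha-2\beta}\,I_{\alpha-1}\!\bigl(2\sqrt{-y}\,u\bigr)\,du.
\]
Replacing the Bessel factor by its large-argument asymptotic expression $(4\pi\sqrt{-y}\,u)^{-1/2}e^{2\sqrt{-y}\,u}$ (controlling the remainder via Lemma \ref{estimate of I and K}) turns the exponent of the integrand, up to slowly varying prefactors, into exactly $G(u)$. One then applies the Laplace method around the interior critical point $u_0$ of $G$, computing $-G''(u_0)=2+(2Q+\gamma_n)/u_0^2$. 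The Gaussian width $|G''(u_0)|^{-1/2}$, together with the substitution factor $(-y)^{(\alpha+1)/2-\beta}$ and the external factor $(-y)^{\beta-\alpha}$ in the definition of $\Phi^\sharp$, reassemble into the stated bound with prefactor $(-y)^{(2-n)/(4n)}$; the small-$u$ portion is controlled by the matching $I_{\alpha-1}(2\sqrt{-y}\,u)\asymp (\sqrt{-y}\,u)^{\alpha-1}$ bound from Lemma \ref{estimate of I and K}.

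For the range $Q\leqslant 1$ the critical points degenerate and the Laplace method no longer yields a useful estimate, so I would argue directly. For $\Psi^\flat$, the factor $(1+s)^{\beta-1}$ is comparable to $(1+s)^{-1}$ on $(0,\infty)$, and a split of the integral at $s=1$ comparing against $\int_0^\infty e^{ys}s^{\alpha-\beta-1}\,ds=\Gamma(\alpha-\beta)(-y)^{\beta-\alpha}$ yields $\Psi^\flat\asymp e^y(-y)^{\beta-\alpha}$ uniformly in $y\leqslant -1$. For $\Phi^\sharp$, the two-sided bound on $I_{\alpha-1}(2\sqrt{s})$ (power growth near $0$ and exponential at infinity) combined with elementary Gaussian integrals produces $(-y)^{-\beta}$. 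The principal obstacle throughout is the book-keeping across the two sub-regimes $(-y)\gtrsim Q$ and $(-y)\lesssim Q$: one must verify that the competing powers of $Q$ and $(-y)$ coming from $t_0$ (resp.\ $u_0$), from $|F''(t_0)|^{-1/2}$ (resp.\ $|G''(u_0)|^{-1/2}$) and from the slowly-varying factor $(1+t_0)^{\alpha-2}$ (resp.\ the Bessel prefactor) really reassemble into the precise exponents $Q^{\pm 1/4}$, $Q^{-1/(2n)}$ and $(-y)^{(2-n)/(4n)}$ claimed in the statement, with the two regimes matching continuously at $Q\asymp (-y)$.
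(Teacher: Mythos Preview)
The paper does not supply its own proof of this lemma: it is quoted from \cite{SZ} with the single remark that it follows ``by Laplace method''. Your proposal is precisely a Laplace--method argument carried out on the integral representations (\ref{expression phi and psi 1}), with the correct factorisation $e^{F(s)}(1+s)^{\alpha-2}$ for $\Psi^\flat$, the correct substitution $s=(-y)u^2$ reducing $\Phi^\sharp$ to an integral with exponent $G(u)$, and the right handling of the small-$Q$ regime by direct comparison; this is exactly the route the paper (via \cite{SZ}) has in mind, and the exponents you compute for $F''(t_0)$, $G''(u_0)$ and the residual powers of $(-y)$ and $Q$ check out.
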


\begin{corollary}\cite{SZ}  
There is a constant $C$ which only depends on $n$ such that
\\when $Q\geqslant 1$, 
           \begin{align*}
 C^{-1} \cdot \frac{Q^{-\frac{1}{4}-\frac{1}{2 n}}}{\Gamma(Q+1)} \cdot e^{-\frac{j z^n}{2}+F\left(t_0(z)\right)} \cdot\left(j z^n\right)^{-1} \leqslant \;&\mathcal{D}(z) \leqslant C \cdot \frac{Q^{\frac{1}{4}}}{\Gamma(Q+1)} \cdot e^{-\frac{j z^n}{2}+F\left(t_0(z)\right)}, \\
 C^{-1} \cdot Q^{-\frac{1}{4}} \cdot \frac{\left(j z^n\right)^{\frac{2-n}{4n}}}{\Gamma(Q+1)} \cdot e^{-\frac{jz^n}{2}+G\left(u_0(z)\right)} \leqslant \;&\mathcal{G}(z) \leqslant C \cdot \frac{\left(j z^n\right)^{\frac{2-n}{4n}}}{\Gamma(Q+1)} \cdot e^{-\frac{jz^n}{2}+G\left(u_0(z)\right)};
\end{align*}
when $Q\leqslant 1$,
            \begin{align*}
C^{-1} \cdot e^{-\frac{jz^n}{2}} \cdot\left(j z^n\right)^{\beta-\alpha} \leqslant \;&\mathcal{D}(z) \leqslant C \cdot e^{-\frac{jz^n}{2}} \cdot\left(jz^n\right)^{\beta-\alpha}, \\
C^{-1} \cdot e^{\frac{jz^n}{2}} \cdot\left(jz^n\right)^{-\beta} \leqslant \;&\mathcal{G}(z) \leqslant C \cdot e^{\frac{jz^n}{2}} \cdot\left(jz^n\right)^{-\beta}
\end{align*} for any $z>1$.
\end{corollary}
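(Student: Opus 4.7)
The plan is to apply Lemma \ref{estimate of psi and phi} directly to the explicit expressions in \eqref{expression D and G 1}, namely $\mathcal{D}(z) = e^{jz^n/2}\,\Psi^\flat(\beta,\alpha,-jz^n)$ and $\mathcal{G}(z) = e^{jz^n/2}\,\Phi^\sharp(\beta,\alpha,-jz^n)$, with the substitution $y = -jz^n$. Since we are in the non-zero mode case ($j \geq 1$) and restrict to $z > 1$, we have $-y = jz^n \geq 1$, so the hypothesis $y \leq -1$ of Lemma \ref{estimate of psi and phi} is satisfied throughout.

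For the case $Q \geq 1$, I would substitute $y = -jz^n$ into the first pair of estimates in Lemma \ref{estimate of psi and phi} and multiply both sides by $e^{jz^n/2}$. The exponential factor $e^y = e^{-jz^n}$ combines with $e^{jz^n/2}$ to produce $e^{-jz^n/2}$, yielding the claimed two-sided bounds on $\mathcal{D}(z)$ in terms of $e^{-jz^n/2 + F(t_0(z))}$; the polynomial factor $(-y)^{-1} = (jz^n)^{-1}$ on the lower side and the $Q^{\pm 1/4}$, $Q^{-1/(2n)}$ weights transfer unchanged. The same computation applied to $\Phi^\sharp$ gives the estimate for $\mathcal{G}(z)$, where the weight $(-y)^{(2-n)/(4n)}$ becomes $(jz^n)^{(2-n)/(4n)}$ and the exponential becomes $e^{-jz^n/2 + G(u_0(z))}$. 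As a sanity check, since $G(u_0) \sim jz^n$ while $F(t_0) \to 0$ as $z \to \infty$, these are consistent with the expected growth of $\mathcal{G}$ and decay of $\mathcal{D}$ for the homogeneous ODE $u'' = (j^2n^2/4 + n\lambda)z^{n-2}u$.

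For $Q \leq 1$ the argument is identical but uses the simpler estimates in Lemma \ref{estimate of psi and phi}: the bounds $\Psi^\flat(\beta,\alpha,y) \approx e^y(-y)^{\beta-\alpha}$ and $\Phi^\sharp(\beta,\alpha,y) \approx (-y)^{-\beta}$, multiplied by $e^{jz^n/2}$ with $y = -jz^n$, yield the remaining two pairs of inequalities in the statement. No real obstacle arises: the corollary is a direct rephrasing of Lemma \ref{estimate of psi and phi} under the change of variables $y = -jz^n$ together with the rescaling by $e^{jz^n/2}$, and the only minor care needed is to confirm that the hypothesis $y \leq -1$ of the lemma holds uniformly for $z > 1$ and $j \geq 1$.
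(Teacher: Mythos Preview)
Your proposal is correct and is exactly the approach implicit in the paper: the corollary is stated without proof (it is cited from \cite{SZ}) precisely because it is a direct substitution of $y=-jz^n$ into Lemma \ref{estimate of psi and phi} followed by multiplication by $e^{jz^n/2}$, as you describe. Your verification that $y\leqslant -1$ holds for $z>1$, $j\geqslant 1$ is the only point requiring care, and you have handled it.
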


We also need the following lemma
\begin{lemma}\label{estimate of F and G}
For any $z\geqslant 1$, $e^{F\left(t_0(z)\right)+G\left(u_0(z)\right)} \leqslant C\cdot j^{\frac{n+2}{4n}}z^{\frac{n+2}{4}} e^{jz^n} e^{-Q} Q^{Q+\frac{n+2}{4n}}$.
\end{lemma}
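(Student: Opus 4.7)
The strategy is to simplify $F(t_0)$ and $G(u_0)$ algebraically using the critical point equations, introduce the two auxiliary quantities $\rho = \sqrt{x^2 + 4Qx}$ and $\rho' = \sqrt{x^2 + (4Q + 2\gamma_n)x}$ (with $x = -y = jz^n$), and then combine them via elementary inequalities so that the two most dangerous logarithmic terms cancel.

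First I would exploit the critical point condition $t_0(1+t_0) = Q/x$ to trade $\log(1+t_0)$ for $\log t_0$ in the definition of $F$, giving $F(t_0) = y t_0 + 2Q\log t_0 - Q\log Q + Q\log x$. Writing $t_0 = (\sqrt{x+4Q} - \sqrt{x})/(2\sqrt{x})$ and rationalizing, one obtains the compact expression
\begin{align*}
F(t_0) = \frac{x - \rho}{2} + Q\log Q + 2Q\log 2 - 2Q\log(x + \rho).
\end{align*}
Analogously, the critical point condition $u_0^2 = \sqrt{x}\, u_0 + Q + \gamma_n/2$ converts $G(u_0) = -u_0^2 + 2\sqrt{x}\, u_0 + (2Q+\gamma_n)\log u_0$ into $\sqrt{x}\,u_0 - Q - \gamma_n/2 + (2Q+\gamma_n)\log u_0$, and using $u_0 = (x+\rho')/(2\sqrt{x})$ one arrives at
\begin{align*}
G(u_0) = \frac{x + \rho'}{2} - Q - \frac{\gamma_n}{2} + (2Q+\gamma_n)\log(x+\rho') - \frac{2Q+\gamma_n}{2}\log x - (2Q+\gamma_n)\log 2.
\end{align*}

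Second, I would add the two expressions. The linear piece is $x + (\rho' - \rho)/2$, which is $\le x + C$ because $\rho' - \rho = 2\gamma_n x/(\rho + \rho') \le \gamma_n \sqrt{x}$ uses $\rho + \rho' \ge 2\sqrt{x}\cdot\sqrt{x} = 2x$ (I would instead use the sharper form giving $O(1)$). The key logarithmic cancellation is
\begin{align*}
-2Q\log(x+\rho) + 2Q\log(x+\rho') = 2Q\log\Bigl(1 + \tfrac{\rho'-\rho}{x+\rho}\Bigr) \le \frac{2Q(\rho'-\rho)}{x+\rho} \le C,
\end{align*}
where the last inequality follows from $\rho'-\rho \le \gamma_n x/\rho$ together with $(x+\rho)\rho \ge 4Qx$ in the regime $Q \gtrsim x$ and a similar bound when $Q \lesssim x$. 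The $\log 2$ contributions combine to an $O(1)$ constant.

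Finally, the only remaining logarithmic factor is $\gamma_n \log(x+\rho') - (\gamma_n/2)\log x$. Since $\rho' \le C\sqrt{x}(\sqrt{x} + \sqrt{Q})$, for $x, Q \ge 1$ one has $\log(x+\rho') \le \log x + \tfrac{1}{2}\log Q + C$, hence
\begin{align*}
F(t_0) + G(u_0) \le x + Q\log Q - Q + \frac{\gamma_n}{2}\log x + \frac{\gamma_n}{2}\log Q + C.
\end{align*}
Exponentiating and recalling $\gamma_n/2 = (n+2)/(4n)$ and $x = jz^n$ gives exactly the claimed inequality. The main obstacle is the bookkeeping: the sizes of $\rho$, $\rho'$, $t_0$, $u_0$ differ markedly between the regimes $Q \lesssim x$ and $Q \gtrsim x$, but working algebraically in terms of $\rho, \rho'$ instead of expanding $t_0, u_0$ asymptotically allows the estimate to be carried out uniformly in both regimes.
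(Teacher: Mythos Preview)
Your approach is correct and, since the paper gives no proof beyond referring to the analogous computation in \cite{SZ}, your direct algebraic manipulation is exactly the kind of ``straightforward computation'' the paper has in mind. Two minor points to clean up: your displayed formula for $F(t_0)$ is missing a $+Q\log x$ term (from $F(t_0)=yt_0+2Q\log t_0 - Q\log Q + Q\log x$ one gets $F(t_0)=\tfrac{x-\rho}{2}+Q\log Q + Q\log x + 2Q\log 2 - 2Q\log(x+\rho)$), but your subsequent bookkeeping is consistent with the correct formula, so this is only a transcription slip. Also, the bound on the linear piece is cleaner than you indicate: since $\rho,\rho'\ge x$ one has $\rho+\rho'\ge 2x$ and hence $\rho'-\rho = 2\gamma_n x/(\rho+\rho')\le \gamma_n$ directly, with no intermediate $\sqrt{x}$ estimate needed. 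For the key cancellation $2Q\log\frac{x+\rho'}{x+\rho}$, the slickest way to see it is bounded is to use $(\rho-x)(\rho+x)=4Qx$, which gives $\frac{2Q(\rho'-\rho)}{x+\rho}=\frac{\gamma_n(\rho-x)}{\rho+\rho'}\le \gamma_n$ uniformly, avoiding the case split you mention.
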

\begin{proof}
    By similar straight forward computation as in \cite{SZ}.
\end{proof}
With those estimates, we can give a $C^0$ bound of $u(z)$. By \cite{SZ} we know that the Wronskian $$\mathcal{W}(\mathcal{G},\mathcal{D}) =\mathcal{G}(z)\mathcal{D}'(z)-\mathcal{G}'(z)\mathcal{D}(z) =\frac{\Gamma(\alpha-1)}{\Gamma(\alpha-\beta)}j^{\frac{1}{n}}.$$ Hence we have a solution of (\ref{equation of nonzero mode}) as follows:
\begin{align}
		u(z) &= \frac{\Gamma(\alpha-\beta)n}{\Gamma(\alpha-1)j^{\frac{1}{n}}}\left(\mathcal{D}(z)\int_1^z\mathcal{G}(s)s^{n-1}v(s)ds+\mathcal{G}(z)\int_z^\infty \mathcal{D}(s)s^{n-1}v(s)ds\right).
	\end{align}

Then we can have the following estimate of our solution:
\begin{proposition}\label{estimate of u wrt v 1}
	Recall that $\lambda_1$ is the first nonzero positive eigenvalue of $-\Delta_Y$.
	Let $v$ be a smooth function such that $|v(z)|\leqslant C_0 z^\delta$ for $z>1$. For any $\lambda$ such that $\lambda > \lambda_1>0$, we can find solution of equation $u''-(\frac{j^2n^2}{4}+n\lambda)z^{n-2} u = nz^{n-1}v$ such that $|u(z)|\leqslant C \cdot C_0\frac{z^{\delta+1}}{\frac{j^2n^2}{4}z^n+n\lambda}$, $|u''(z)|\leqslant C\cdot C_0z^{\delta +n-1}$ on $z>C$ for some constant $C>1$ only depends on $n$, $\lambda_1$ and $a$.
\end{proposition}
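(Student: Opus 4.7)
The plan is to adapt the integration-by-parts strategy of Proposition \ref{estimate of u wrt v 0} to the non-zero-mode setting, starting from the variation-of-parameters representation
\begin{align*}
u(z) = \frac{\Gamma(\alpha-\beta)n}{\Gamma(\alpha-1)j^{1/n}}\left(\mathcal{D}(z)\int_1^z \mathcal{G}(s)s^{n-1}v(s)\,ds + \mathcal{G}(z)\int_z^\infty \mathcal{D}(s)s^{n-1}v(s)\,ds\right),
\end{align*}
and combining it with the two-regime bounds on $\mathcal{D}$ and $\mathcal{G}$ supplied by Lemma \ref{estimate of psi and phi}. I would split the argument into the cases $Q\leqslant 1$ and $Q\geqslant 1$ matching that lemma; these correspond morally to $\lambda\lesssim j$ and $\lambda\gg j$, exactly the crossover at which the denominator $\tfrac{j^2n^2}{4}z^n+n\lambda$ in the target bound changes which term dominates.

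In the regime $Q\leqslant 1$, Lemma \ref{estimate of psi and phi} gives the simple bounds $\mathcal{D}(z)\sim e^{-jz^n/2}(jz^n)^{\beta-\alpha}$ and $\mathcal{G}(z)\sim e^{jz^n/2}(jz^n)^{-\beta}$. Plugging $|v(s)|\leqslant C_0 s^\delta$ into the two integrals reduces everything to integrals of the form $\int e^{\pm js^n/2}s^{a}\,ds$. The identity $\tfrac{d}{ds}e^{\pm js^n/2}=\pm\tfrac{jn}{2}s^{n-1}e^{\pm js^n/2}$ lets me peel off the $s^{n-1}$ factor and integrate by parts, producing a $j^{-1}$ factor at the boundary and a strictly lower-order integral remainder. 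Multiplying by the product $\mathcal{D}(z)\mathcal{G}(z)\sim(jz^n)^{-\alpha}$ and the Wronskian coefficient $\Gamma(\alpha-\beta)/\Gamma(\alpha-1)\cdot j^{-1/n}$, which is uniformly bounded once $\beta$ lies in the compact range corresponding to $Q\leqslant 1$, collapses the estimate precisely to $C\cdot C_0 z^{\delta+1}/(j^2z^n)$, matching the target in this regime.

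In the regime $Q\geqslant 1$, the integrands are sharply peaked and the Laplace-method bounds of Lemma \ref{estimate of psi and phi} become essential. I would again integrate by parts against $\tfrac{d}{ds}e^{\pm js^n/2}$ to produce a boundary term $\mathcal{D}(z)\mathcal{G}(z)\cdot z^{\delta}\cdot j^{-1}$ plus remainders that can be absorbed. Using Lemma \ref{estimate of F and G} to bound $e^{F(t_0)+G(u_0)}$ together with Stirling's asymptotics $\Gamma(Q+1)\sim\sqrt{2\pi Q}(Q/e)^Q$, the denominators $\Gamma(Q+1)^2$ in $\mathcal{D}\cdot\mathcal{G}$ absorb the $e^{-Q}Q^{Q+(n+2)/(4n)}$ factor, leaving an effective product $\mathcal{D}(z)\mathcal{G}(z)\sim j^{1/n}z\cdot(e/Q)^{Q}\cdot Q^{\text{const}}$; combined with the prefactor $\Gamma(\alpha-\beta)/[\Gamma(\alpha-1)j^{1/n}]$ (for which Stirling similarly controls $\Gamma(\alpha-\beta)\sim\Gamma(Q+1)$) and recalling $Q\sim\lambda/(nj)$, this yields exactly the claimed $z^{\delta+1}/(n\lambda)$ factor when $\lambda$ dominates. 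The $C^2$ estimate $|u''|\leqslant C C_0 z^{\delta+n-1}$ then follows immediately from the ODE: the term $(\tfrac{j^2n^2}{4}+n\lambda)z^{n-2}|u|$ is exactly the quantity we have just bounded multiplied by the coefficient in the ODE, which produces at most $C C_0 z^{\delta+n-1}$, and $n z^{n-1}|v|\leqslant C C_0 z^{\delta+n-1}$ is automatic.

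The main obstacle is keeping all constants uniform in $j$ and $\lambda$ simultaneously. Two issues need care: first, the Stirling-based cancellation between $e^{-Q}Q^Q$ and $\Gamma(Q+1)^2$ must be carried out quantitatively, so that the residual algebraic powers of $Q$ combine cleanly with the $j^{1/n}z$ factor coming from the product $(jz^n)^{(2-n)/(4n)}\cdot j^{(n+2)/(4n)}z^{(n+2)/4}$; second, the boundary of the two regimes $Q\approx 1$ must be handled so that the two case-estimates match (this is automatic if the constants depend only on $n,\delta,\lambda_1$, since $Q=1$ is a fixed threshold in those variables). These are precisely the uniformities needed downstream, in Proposition \ref{improve the decay of f_0}, to sum the Fourier series solution globally.
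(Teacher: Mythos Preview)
Your approach diverges from the paper's in a way that creates a real gap in the $Q\geqslant 1$ regime. The paper does \emph{not} integrate by parts against the bare exponential $e^{\pm js^n/2}$. Instead it uses the ODE structure itself: writing $s^{\delta+n-1}=q(s)\cdot P(s^{\delta+n-1})$ with $q(s)=\bigl(\tfrac{j^2n^2}{4}s^n+n\lambda\bigr)s^{n-2}$ and $P(f)=f/q$, and integrating by parts twice via $\mathcal{G}''=q\mathcal{G}$, $\mathcal{D}''=q\mathcal{D}$, the boundary contributions at $s=z$ combine to $-\mathcal{W}(\mathcal{G},\mathcal{D})\cdot P(z^{\delta+n-1})$. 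The Wronskian is then exactly cancelled by the prefactor $\Gamma(\alpha-\beta)n/(\Gamma(\alpha-1)j^{1/n})$, yielding the target $\tfrac{z^{\delta+1}}{\tfrac{j^2n^2}{4}z^n+n\lambda}$ in one stroke, uniformly in $j$ and $\lambda$. Iterating with $T=\tfrac{d^2}{dz^2}$ drives the remainder to arbitrarily fast polynomial decay, after which the asymptotics of $\mathcal{D},\mathcal{G}$ are used only to show the tail integrals are harmless.

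Your scheme, by contrast, writes $\mathcal{G}(s)=e^{js^n/2}\tilde{\mathcal{G}}(s)$ and integrates by parts against $e^{js^n/2}$. The problem is that $\tilde{\mathcal{G}}$ is not slowly varying uniformly in $\lambda$: from the reduced equation $\tilde{\mathcal{G}}''+jns^{n-1}\tilde{\mathcal{G}}'=\bigl(n\lambda-\tfrac{jn(n-1)}{2}\bigr)s^{n-2}\tilde{\mathcal{G}}$ one sees $\tilde{\mathcal{G}}'/\tilde{\mathcal{G}}\sim \lambda/(js)$ for large $s$, so each step of your integration by parts trades a factor $s^{-1}$ for a factor $\lambda/j\sim nQ$. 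The remainder is therefore not uniformly smaller than the original integrand when $Q$ is large, and the series you would generate is controlled only on the range $s^n\gtrsim Q$. The Stirling cancellation you outline for the boundary term $\mathcal{D}(z)\mathcal{G}(z)\cdot z^\delta\cdot j^{-1}$ also does not close: carrying it through with Lemma~\ref{estimate of F and G} and Stirling gives a bound of order $Q^{1/(2n)}z^{\delta+1}/j$ after the prefactor, which is not $\leqslant C z^{\delta+1}/(n\lambda)$ when $\lambda\gg j$. The missing idea is precisely to let the full coefficient $q(s)$ do the work via $\mathcal{G}''=q\mathcal{G}$, so that the Wronskian (rather than a Stirling computation) delivers the denominator.
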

\begin{proof}
Similar as the zero-mode case, we estimate $\mathcal{D}(z)\int_1^z\mathcal{G}(s)s^{n-1}v(s)ds+\mathcal{G}(z)\int_z^\infty \mathcal{D}(s)s^{n-1}v(s)ds$.\\
By integration by parts we have
\begin{align*}
    &\mathcal{D}(z)\int_1^z\mathcal{G}(s)s^{\delta+n-1}ds + \mathcal{G}(z)\int_z^\infty \mathcal{D}(s)s^{\delta+n-1}ds\\
    =& -\mathcal{W}(\mathcal{G},\mathcal{D})\sum_{k=0}^{N-1} P(TP)^k(z^{\delta+n-1})+\mathcal{D}(z)\int_1^z\mathcal{G}(s)(TP)^N(s^{\delta+n-1})ds + \mathcal{G}(z)\int_z^\infty \mathcal{D}(s)(TP)^N(s^{\delta+n-1})ds,
\end{align*}
where $P, T:C^\infty(\RR_+)\to C^\infty(\RR_+)$ are given by 
\begin{align*}
    P(f) = \frac{f}{z^{n-2}(\frac{j^2n^2}{4}z^n+n\lambda)}, \quad T(f) = f''.
\end{align*}
By straight forward computation and induction we can see that $$(TP)^k(z^{\delta+n-1})\leqslant C(k,n,\delta) \frac{z^{\delta+n-1-2nk}}{j^{2k}}.$$

So by taking $N$ large enough we have $(TP)^N(z^{\delta+n-1})\leqslant C(n,\delta,j)z^{-M}$ for some $M>2$ which will be chosen later.\\
We first consider the case that $Q\geqslant 1$. 
By Lemma \ref{estimate of F and G} the first term becomes
\begin{align*}
   \mathcal{D}(z)\int_1^z\mathcal{G}(s)(TP)^N(s^{\delta+n-1})ds
   \leqslant \;&C \cdot e^{-\frac{j z^n}{2}+F\left(t_0(z)\right)}\int_1^z \left(j s^n\right)^{\frac{2-n}{4n}}s^{-M}\cdot e^{-\frac{js^n}{2}+G\left(u_0(s)\right)}ds\\
    \;\leqslant \;&C \cdot e^{-jz^n+G\left(u_0(z)\right)+F\left(t_0(z)\right)} \cdot  j^{\frac{2-n}{4n}} \cdot z^{\frac{2-n}{4}-M+1}\leqslant C(n,Q)\cdot j^{\frac{1}{n}} \cdot z^{2-M}.
\end{align*} 
For the second term, we have similar estimate:
\begin{align*}
    \mathcal{G}(z)\int_z^\infty \mathcal{D}(s)(TP)^N(s^{\delta+n-1})ds
    \leqslant \;&C \cdot \left(j z^n\right)^{\frac{2-n}{4n}}\cdot e^{-\frac{j z^n}{2}+G\left(t_0(z)\right)}\int_z^\infty  s^{-M} e^{-\frac{js^n}{2}+F\left(u_0(s)\right)}ds\\
    \;\leqslant \;& C \cdot  e^{-j z^n+G\left(u_0(z)\right)+F\left(t_0(z)\right)} \cdot  j^{\frac{2-n}{4n}} \cdot z^{\frac{2-n}{4}-M+1}\leqslant C(n,Q)\cdot j^{\frac{1}{n}} \cdot z^{2-M}.
\end{align*} 
Then we consider the case where $Q\leqslant 1$
\begin{align*}
   &\mathcal{D}(z)\int_1^z\mathcal{G}(s)(TP)^N(s^{\delta+n-1})ds\leqslant \;C \cdot e^{-\frac{jz^n}{2}} \cdot\left(jz^n\right)^{\beta-\alpha} \int_1^z s^{-M}e^{\frac{js^n}{2}} \cdot\left(js^n\right)^{-\beta}ds\;\leqslant \;C\cdot j^{-\alpha} z^{1-n\alpha-M},\\
    &\mathcal{G}(z)\int_z^\infty \mathcal{D}(s)(TP)^N(s^{\delta+n-1})ds\leqslant \;C\cdot e^{\frac{jz^n}{2}} \cdot\left(jz^n\right)^{-\beta}\int_z^\infty  s^{-M}  e^{-\frac{js^n}{2}} \cdot\left(js^n\right)^{\beta-\alpha} ds\;\leqslant \;C\cdot j^{-\alpha} z^{1-n\alpha-M}.
\end{align*} 
 
So we have the uniform estimate for $u$ that for any $z>C(n,\delta)$,
\begin{align*}
	|u(z)|\leqslant C \frac{z^{\delta+1}}{\frac{j^2n^2}{4}z^n+n\lambda}.
\end{align*}

In the end, we get if $|v(z)|\leqslant C_0z^\delta$ on  $z>C(n,\delta, M)$, then for any $z>C(n,\delta, M)$
$$\left|u(z)\right|\leqslant C(n) \cdot C_0 \frac{z^{\delta+1}}{\frac{j^2n^2}{4}z^n+n\lambda}, 
\quad \left|u'(z)\right|\leqslant C(n) \cdot C_0 z^{\delta+n}, 
\quad \left|u''(z)\right|\leqslant C(n) \cdot C_0 z^{\delta+n-1}.$$
\end{proof}
\begin{remark}
	Even though the separation of variable method is very explicit, we can only get a bound of $u$ with respect to the polynomial growth order  of $v$ rather than the function $v$ itself. This is mainly because the behavior of operator $T$ and $P$ is not clear for general function.
\end{remark}
\bibliographystyle{plain}
\bibliography{ref}
\end{document}